\definecolor{ultramarine}{RGB}{0,32,96}
\colorlet{mygreen}{green!20!gray}
\colorlet{myultramarine}{ultramarine!20!gray}
\newsavebox\myboxA
\newsavebox\myboxB
\newlength\mylenA
\DeclareMathOperator*{\dprime}{^{\prime \prime}}
\numberwithin{equation}{section}
\numberwithin{figure}{section}
\newcommand{\upperset}[2]{%
\underset{%
        \text{\raisebox{0.5ex}{\smash{\fontsize{5}{5}$#1$}}}
              }{#2}%
                    }
\newcommand{\uppersetF}[2]{%
\underset{%
        \text{\raisebox{0.8ex}{\smash{\fontsize{5}{5}$#1$}}}
              }{#2}%
                    }
\newcommand{\oversetF}[2]{%
\overset{%
        \text{\raisebox{0.2ex}{\smash{\fontsize{5}{5}$#1$}}}
              }{#2}%
                    }
\DeclareFontFamily{U}{BOONDOX-calo}{\skewchar\font=45 }
\DeclareFontShape{U}{BOONDOX-calo}{m}{n}{
  <-> s*[1.05] BOONDOX-r-calo}{}
\DeclareFontShape{U}{BOONDOX-calo}{b}{n}{
  <-> s*[1.05] BOONDOX-b-calo}{}
\DeclareMathAlphabet{\mathcalboondox}{U}{BOONDOX-calo}{m}{n}
\SetMathAlphabet{\mathcalboondox}{bold}{U}{BOONDOX-calo}{b}{n}
\DeclareMathAlphabet{\mathbcalboondox}{U}{BOONDOX-calo}{b}{n}
\title{Homotopy Transfer Theorem and minimal models for pre-Calabi-Yau categories}
\author{Marion Boucrot}
\date{}
\newcommand{\MA}{\mathcal{A}}
\newcommand{\MB}{\mathcal{B}}
\newcommand{\MO}{\mathcal{O}}
\newcommand{\ZZ}{{\mathbb{Z}}}
\newcommand{\NN}{{\mathbb{N}}}
\newcommand{\kk}{\Bbbk}
\newcommand{\Hom}{\operatorname{Hom}}
\newcommand{\shom}{\operatorname{hom}}
\newcommand{\Homgr}{\mathcal{H}om}
\newcommand{\llg}{\operatorname{lg}}
\newcommand{\nec}{\operatorname{nec}}
\newcommand{\llt}{\operatorname{lt}}
\newcommand{\rrt}{\operatorname{rt}}
\newcommand{\Multi}{\operatorname{Multi}}
\newcommand{\id}{\operatorname{id}}
\newcommand{\multinec}{\operatorname{multinec}}
\newcommand{\pre}{\operatorname{pre}}
\newcommand*{\relrelbarsep}{.386ex}
\newcommand*{\relrelbar}{%
  \mathrel{%
    \mathpalette\@relrelbar\relrelbarsep
  }%
}
\newcommand*{\@relrelbar}[2]{%
  \raise#2\hbox to 0pt{$\m@th#1\relbar$\hss}%
  \lower#2\hbox{$\m@th#1\relbar$}%
}
\providecommand*{\rightrightarrowsfill@}{%
  \arrowfill@\relrelbar\relrelbar\rightrightarrows
}
\providecommand*{\leftleftarrowsfill@}{%
  \arrowfill@\leftleftarrows\relrelbar\relrelbar
}
\providecommand*{\xrightrightarrows}[2][]{%
  \ext@arrow 0359\rightrightarrowsfill@{#1}{#2}%
}
\providecommand*{\xleftleftarrows}[2][]{%
  \ext@arrow 3095\leftleftarrowsfill@{#1}{#2}%
}
\newcommand*\xoverline[2][0.75]{%
    \sbox{\myboxA}{$\m@th#2$}%
    \setbox\myboxB\null
    \ht\myboxB=\ht\myboxA%
    \dp\myboxB=\dp\myboxA%
    \wd\myboxB=#1\wd\myboxA
    \sbox\myboxB{$\m@th\overline{\copy\myboxB}$}
    \setlength\mylenA{\the\wd\myboxA}
    \addtolength\mylenA{-\the\wd\myboxB}%
    \ifdim\wd\myboxB<\wd\myboxA%
       \rlap{\hskip 0.5\mylenA\usebox\myboxB}{\usebox\myboxA}%
    \else
        \hskip -0.5\mylenA\rlap{\usebox\myboxA}{\hskip 0.5\mylenA\usebox\myboxB}%
    \fi}
\newcommand*{\doubarl}[1]{\xoverline{\xoverline{#1}}} 
\newcommand*{\doubar}[1]{\bar{\bar{#1}}} 
\newtheorem{definition}{Definition}[section]
\newtheorem{definition-proposition}[definition]{Definition-Proposition}
\newtheorem{remark}[definition]{Remark}
\newtheorem{theorem}[definition]{Theorem}
\newtheorem{proposition}[definition]{Proposition}
\newtheorem{corollary}[definition]{Corollary}
\newtheorem{lemma}[definition]{Lemma}
\begin{document}
\maketitle
\hrulefill
\begin{abstract} 
In this article, we extend results of J. Leray and B. Vallette on homotopical properties of pre-Calabi-Yau algebras to the case of pre-Calabi-Yau categories. We give direct proofs of the results adapting techniques used by D. Petersen for the case of coalgebras, instead of using properadic calculus. More precisely, we prove that given quasi-isomorphic dg quivers and a pre-Calabi-Yau structure on one of them there exists a pre-Calabi-Yau structure on the other as well as a pre-Calabi-Yau morphism between the two.
Moreover, we show that a pre-Calabi-Yau morphism whose first component is an isomorphism of dg quivers is an isomorphism in the category of pre-Calabi-Yau categories. We incidentally show that any pre-Calabi-Yau category has a minimal model.
Finally, we prove that any
quasi-isomorphism of pre-Calabi-Yau categories admits a quasi-inverse.
\end{abstract}

\textbf{Mathematics subject classification 2020:} 16E45, 18G70, 14A22

\textbf{Keywords:} $A_{\infty}$-algebras, pre-Calabi-Yau algebras, homotopy transfer, minimal models

\hrulefill

\tableofcontents

\section{Introduction}

Pre-Calabi-Yau structures were introduced by M. Kontsevich and Y. Vlassopoulos in the last decade.
In the finite dimensional case, these structures have also appeared in \cite{tz}
under the name of $V_{\infty}$-algebras and P. Seidel called them $A_{\infty}$-algebras with boundary in \cite{seidel}.
They also appear under the name of weak Calabi-Yau structures in \cite{kontsevich}. These references show that pre-Calabi-Yau structures
play an important role in homological algebra, symplectic geometry, string topology, noncommutative geometry and even in Topological Quantum Field Theory. 

The notion of pre-Calabi-Yau algebras is strongly related to the notion $A_{\infty}$-algebras. Actually, for $d\in\ZZ$, a $d$-pre-Calabi-Yau structure on a finite dimensional graded vector space $A$ is equivalent to a cyclic $A_{\infty}$-structure on $A\oplus A^*[d-1]$ such that $A$ is an $A_{\infty}$-subalgebra.

In 1980, T. Kadeishvili proved in \cite{kadeishvili} the following theorem:
\begin{theorem}
\label{thm:HTT-kadeishvili}
    (Homotopy Transfer Theorem) For any dg algebra $C$ such that $H_i(C)$ is free for $i\geq 0$, there exists an $A_{\infty}$-structure on $H(C)$ with vanishing differential and an $A_{\infty}$-morphism $f:H(C)\rightarrow C$ such that $f^1$ is a quasi-isomorphism.
\end{theorem}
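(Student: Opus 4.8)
The plan is to transfer the dg-algebra structure along an explicit homotopy retract and to express the transferred operations, together with the morphism, as sums over planar binary trees. First I would use the hypothesis that each $H_i(C)$ is free to split the short exact sequences $0 \to B_i \to Z_i \to H_i(C) \to 0$ of cycles and boundaries, as well as $0 \to Z_i \to C_i \to B_{i-1} \to 0$. This produces a chain map $\iota \colon H(C) \to C$ realizing each homology class by a chosen cycle, a projection $\pi \colon C \to H(C)$ with $\pi \circ \iota = \id_{H(C)}$, and a degree $+1$ homotopy $h \colon C \to C$ satisfying $\id_C - \iota \circ \pi = d_C \circ h + h \circ d_C$. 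Rescaling $h$ in the standard way, I would moreover arrange the side conditions $h^2 = 0$, $h \circ \iota = 0$ and $\pi \circ h = 0$, which streamline every later computation.

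Next I would set $f^1 = \iota$ and define the higher operations and the higher components of the morphism by summing over the set $\mathrm{PBT}_n$ of planar binary trees with $n$ leaves. To such a tree $T$ one associates the operator $C^{\otimes n} \to C$ obtained by placing the product $\mu = m_C^2$ at each internal vertex and the homotopy $h$ on each internal edge. Decorating the $n$ leaves by $\iota$ and the root by $\pi$ then yields the candidate operation $m^n \colon H(C)^{\otimes n} \to H(C)$, with $m^1 = 0$, while decorating the root by $h$ instead of $\pi$ yields the component $f^n \colon H(C)^{\otimes n} \to C$. The signs appearing in these sums are the Koszul signs dictated by the grafting procedure.

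It then remains to check that $(H(C), \{m^n\})$ is an $A_\infty$-algebra and that $f = \{f^n\}$ is an $A_\infty$-morphism onto $C$. The conceptually cleanest route, and the one I would follow, is to package the tree sums as the output of the homological perturbation lemma: the contraction $(\iota, \pi, h)$ extends to a contraction between the reduced tensor coalgebras $\overline{T}(H(C)[1])$ and $\overline{T}(C[1])$, and the bar codifferential on the latter is a perturbation, by the term $m_C^2$, of the differential induced by $d_C$ alone. Convergence is guaranteed because this perturbation strictly decreases the tensor-length filtration, and the perturbed codifferential and perturbed projection reproduce exactly the tree formulas above: squaring the new codifferential to zero is automatic and gives the Stasheff identities, while the perturbed projection is by construction a morphism of coalgebras, which is precisely the datum of $f$. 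Alternatively one can verify both families of identities directly by induction on $n$, grouping the trees according to the edge adjacent to the root and using $h^2 = 0$, $h\iota = 0$, $\pi h = 0$ together with the Leibniz rule satisfied by $d_C$ with respect to $\mu$.

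The main obstacle is exactly this verification of the Stasheff identities: in the direct approach the bookkeeping of Koszul signs across the tree sums is delicate, and the induction requires splitting each tree at its lowest internal vertex so that the homotopy relation $\id_C - \iota\pi = d_C h + h d_C$ telescopes the unwanted terms; in the perturbation approach the analogous work is hidden in checking that the abstract formulas agree with the tree sums. Finally, $f^1 = \iota$ is a quasi-isomorphism because $\pi \circ \iota = \id_{H(C)}$ together with $\iota\pi \simeq \id_C$ forces $H(\iota)$ to be an isomorphism, which completes the proof.
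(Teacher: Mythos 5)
There is a genuine gap, and it occurs at the very first step. You claim that freeness of each $H_i(C)$ allows you to split \emph{both} exact sequences $0 \to B_i \to Z_i \to H_i(C) \to 0$ and $0 \to Z_i \to C_i \to B_{i-1} \to 0$. Freeness (hence projectivity) of $H_i(C)$ does split the first, but it says nothing about the second: splitting $0 \to Z_i \to C_i \to B_{i-1} \to 0$ requires the boundary module $B_{i-1}$ to be projective, which is not implied by the hypothesis. Concretely, over $R=\mathbb{Z}$ take the complex $C_1=\mathbb{Z} \xrightarrow{\;\mathrm{mod}\,2\;} C_0=\mathbb{Z}/2$: its homology ($H_1\cong\mathbb{Z}$, $H_0=0$) is free, yet $0\to 2\mathbb{Z}\to\mathbb{Z}\to\mathbb{Z}/2\to 0$ does not split. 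This can even be realized by a unital dg algebra: $C_0=\mathbb{Z}[\varepsilon]/(\varepsilon^2,2\varepsilon)$, $C_1=\mathbb{Z}u$ with $du=\varepsilon$, $u^2=u\varepsilon=0$, whose homology is free in each degree. In such examples no contraction $(\iota,\pi,h)$ with $\pi\iota=\mathrm{id}$ and $\mathrm{id}_C-\iota\pi=d_Ch+hd_C$ can exist, because these data force a direct sum decomposition of complexes $C\cong H(C)\oplus(\text{acyclic})$, which fails here (e.g.\ $\mathbb{Z}$ is indecomposable, so the acyclic summand in degree $1$ would have to vanish, leaving nothing to cancel the torsion in degree $0$). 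Since your entire construction — the tree sums and the perturbation lemma alike — takes the contraction as input, the proof cannot get started under the stated hypothesis. Note that over a field the retract always exists, but over a field the freeness hypothesis is vacuous; the content of Kadeishvili's theorem is precisely over a general ground ring.

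This is exactly the point that the inductive approach — Kadeishvili's original argument, Petersen's reformulation (Theorem \ref{thm:HTT-petersen}), and the proofs of Theorems \ref{thm2-Ainf} and \ref{thm3-Ainf} in this paper — is designed to avoid. There one never chooses a global homotopy $h$: freeness is used only (a) to choose $f^1$ by splitting $Z_i \to H_i(C)$, and (b) to lift maps $H(C)^{\otimes n}\to B(C)$ through the surjection $d\colon C\to B(C)$, which is possible because $H(C)^{\otimes n}$ is free, hence projective. The structure maps and morphism components are then built arity by arity: one shows the obstruction (the arity-$n$ component of $m\circ m$, resp.\ of $f\circ m - m\circ f$) is a cycle in the relevant Hom-complex, uses the quasi-isomorphism to conclude it is a boundary, and corrects $m^n$ and $f^n$ accordingly. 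Your proposal would become a correct (and standard) proof if you either strengthen the hypothesis to the existence of a homotopy retract (for instance, each $B_i$ projective, or ground ring a field), or replace the contraction step by this inductive obstruction argument.
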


In 2020, D. Petersen published in \cite{petersen} a direct proof of the following theorem, which is a reformulation of the Homotopy Transfer Theorem:
\begin{theorem}
    \label{thm:HTT-petersen}
    Let $R$ be a ring, $(V,d_V)$ and $(W,d_W)$ dg $R$-modules and $f : V\rightarrow W$ a chain map.
    Let $\nu$ be a square-zero coderivation of $C(W)$ of degree $-1$ whose arity $1$ term is $d_W$.
    Assume that f induces a quasi-isomorphism $\Hom_R(C(V),V)\rightarrow \Hom_R(C(W),W)$.
    Then, there exists noncanonically a square-zero coderivation $\mu$ of $C(V)$ of degree $-1$ whose arity $1$ term is $d_V$ and a morphism $F : C(V)\rightarrow C(W)$ of coalgebras whose arity $1$ term is f and which is a chain map with respect to $\mu$ and $\nu$.
\end{theorem}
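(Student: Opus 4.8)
The plan is to recast the statement as a Maurer--Cartan problem and to solve it by an obstruction-theoretic induction on arity, invoking the quasi-isomorphism hypothesis exactly at the point where obstruction classes must be seen to vanish. Recall that a coderivation of the cofree conilpotent coalgebra $C(V)$ is determined by its corestriction $C(V)\to V$, so that square-zero coderivations of degree $-1$ with arity-$1$ term $d_V$ correspond bijectively to Maurer--Cartan elements of the convolution dg Lie algebra on $\Hom_R(C(V),V)$ whose differential $\partial_V$ is induced by $d_V$ and whose bracket is the usual Gerstenhaber/brace bracket; likewise a coalgebra morphism $F:C(V)\to C(W)$ is determined by its corestriction $C(V)\to W$, an element of $\Hom_R(C(V),W)$, and the condition that $F$ be a chain map for $\mu$ and $\nu$ is the intertwining relation $F\mu=\nu F$. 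First I would package the unknowns $\mu$ and $F$ together as a single piece of data extending the prescribed arity-$1$ part $(\mu^{(1)},F^{(1)})=(d_V,f)$, with $\nu$ held fixed, and build it one arity at a time.

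The base case is immediate: the arity-$1$ equations are precisely $d_V^2=0$, $\nu^2=0$ and $f d_V=d_W f$, all of which hold by hypothesis. For the inductive step, suppose $\mu^{(k)}$ and $F^{(k)}$ have been constructed for all $k<n$ so that $\mu^2=0$ and $F\mu-\nu F=0$ hold through arity $n-1$. Extracting the arity-$n$ corestriction of $\mu^2$ separates a linear term $\partial_V\mu^{(n)}$ from an obstruction $r_n\in\Hom_R(V^{\otimes n},V)$ depending only on $\mu^{(<n)}$, and extracting the arity-$n$ corestriction of $F\mu-\nu F$ separates $\partial F^{(n)}$ from an obstruction $s_n\in\Hom_R(V^{\otimes n},W)$ depending on $\mu^{(\le n)}$, $F^{(<n)}$ and $\nu$. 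The standard ``Bianchi'' computation, which uses only the lower-arity relations together with $\nu^2=0$, shows that $r_n$ and $s_n$ are cocycles; the step is completed by solving $\partial_V\mu^{(n)}=-r_n$ and $\partial F^{(n)}=-s_n$ simultaneously.

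The crux is to promote ``cocycle'' to ``coboundary'', and this is the only place the quasi-isomorphism is needed. I would reformulate the hypothesis as the acyclicity of the mapping cone $\cone(f_\sharp)$ of the post-composition map $f_\sharp:\Hom_R(C(V),V)\to\Hom_R(C(V),W)$, this being the precise form in which the quasi-isomorphism enters (the corestriction of $F$ naturally lives in the target $\Hom_R(C(V),W)$). I would then observe that the obstruction pair $(r_n,s_n)$ is precisely a cocycle of $\cone(f_\sharp)$: the identity coupling $r_n$ and $s_n$ is furnished by the already-established morphism relations and by $\nu^2=0$ on the $W$-side, which supplies the explicit primitive making $f_\sharp r_n$ a coboundary. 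Acyclicity of the cone then forces $(r_n,s_n)$ to be a coboundary, which is exactly a simultaneous solution $(\mu^{(n)},F^{(n)})$ of the two equations above. Since each such solution is unique only up to a cocycle, the construction involves genuine choices, accounting for the asserted non-canonicity.

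The hard part will be the simultaneous solvability in the inductive step. It is tempting to transfer $\mu$ first and then build $F$ against the transferred structure, but the choice of $\mu^{(n)}$ and the existence of $F^{(n)}$ are coupled through $\Hom_R(C(V),W)$, and decoupling them would require more than the bare quasi-isomorphism hypothesis. Carrying $\mu$ and $F$ together, so that the relation $\nu^2=0$ on $W$ becomes available as the source of the primitives that kill the obstructions, is what makes the cone argument go through; the remaining labour is the careful verification of the signs and of the brace-type cocycle identities entering the ``Bianchi'' step and the cone differential.
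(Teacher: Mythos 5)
Your proposal is correct, and at bottom it is the same induction-on-arity obstruction argument as the paper's own proof of this statement, which appears as the proof of Theorem \ref{thm2-Ainf} (the paper's $A_\infty$-categorical restatement of Theorem \ref{thm:HTT-petersen}). There, the square-zero obstruction at arity $\bar{x}$ is shown to be a cycle using the associativity identity \eqref{eq:associativity-mA}; its image under post-composition with $f$ is shown to be a boundary using the mixed identity \eqref{eq:second-identity} --- this is exactly your coupling identity, and its proof (the corollary preceding Theorem \ref{thm2-Ainf}) uses $\nu^2=0$ on the target, precisely as you predict --- and the quasi-isomorphism hypothesis then produces the corrections. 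The only genuine difference is packaging: you phrase solvability as acyclicity of $\cone(f_\sharp)$ and extract $(\mu^{(n)},F^{(n)})$ as a single cone primitive, whereas the paper unwinds the cone into its two constituent statements: injectivity of $H(f_\sharp)$ yields $e$ with $\partial e$ equal to the square-zero obstruction (correcting $sm_{\MA}^{\bar{x}}$), and surjectivity then writes the remaining morphism defect as $f_\sharp(e')+\partial(e'')$ with $e'$ a cycle, correcting $sm_{\MA}^{\bar{x}}$ a second time and $sF^{\bar{x}}$ simultaneously. Since the long exact sequence of the cone is exactly injectivity plus surjectivity on cohomology, the two formulations are interchangeable; yours is more compact, while the paper's makes every choice explicit. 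One bookkeeping point to repair when writing yours up: as stated, your $s_n$ depends on $\mu^{(\le n)}$, so $(r_n,s_n)$ is not a well-defined cone cocycle before $\mu^{(n)}$ is chosen. The term $f_\sharp\mu^{(n)}$ must be moved to the unknown side --- it is part of $d_{\cone}(\mu^{(n)},F^{(n)})$ --- and $s_n$ must be built from $\mu^{(<n)}$, $F^{(<n)}$ and $\nu$ only; with that convention $(r_n,s_n)$ is a genuine cocycle of $\cone(f_\sharp)$ and your argument closes as described.
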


The converse statement also holds, namely, if $V$ is endowed with a square-zero coderivation then we can construct one on $W$. 
The Homotopy Transfer Theorem has been proved in several different ways and we refer the reader to the introduction of \cite{petersen} and the references therein for a more complete account about this result.

In \cite{lv}, J. Leray and B. Vallette show that the notion of curved homotopy double Poisson gebras is equivalent to the notion of curved pre-Calabi-Yau algebras.
Using their paper \cite{hlv} with E. Hoffbeck on properadic homotopical calculus, they obtain homotopical properties of pre-Calabi-Yau algebras. This includes a version of the Homotopy Transfer Theorem, namely, they show that a pre-Calabi-Yau structure on a locally finite dimensional dg vector space gives rise to a pre-Calabi-Yau structure on any of its contraction together with extensions of the chain maps into $\infty$-morphisms. 
Moreover, the previous authors show that an $\infty$-morphism whose first component is an isomorphism is invertible and deduce that $\infty$-quasi-isomorphisms are quasi-invertible. 

The aim of this article is to give direct proofs of the previous statements, extending them to the case of pre-Calabi-Yau categories (see Theorems \ref{thm:2-pcy}, \ref{thm:3-pcy} and \ref{thm:q-iso-pCY}), 
by adapting the techniques of \cite{petersen}.

Let us briefly present the contents of this paper.
Section \ref{not-conv} is devoted to present the conventions we follow and notations we will use throughout this paper.
In section \ref{A-inf-case} we recall the notions of $A_{\infty}$-categories and $A_{\infty}$-morphisms. We also recall well-known results including the ones appearing in \cite{petersen}, giving proofs entirely in terms of the structure of the $A_{\infty}$-category, instead of the associated bar constructions, as it is the case in the mentioned article. 
This is done in order to allow the comparison with the case of pre-Calabi-Yau categories.
We do not claim any originality in this regards.
Section \ref{pCY-case} is the core of the article. After recalling the definition of $d$-pre-Calabi-Yau categories and morphisms given in \cite{ktv}, we prove that a $d$-pre-Calabi-Yau morphism whose first component is an isomorphism is an isomorphism in the category of $d$-pre-Calabi-Yau categories (see Lemma \ref{lemma:iso-pCY}).
Then, we prove Theorems \ref{thm:2-pcy} and \ref{thm:3-pcy} by generalizing the proofs of the case of $A_{\infty}$-categories. Those results state that given dg quivers and a quasi-isomorphism between them, if one carries a $d$-pre-Calabi-Yau structure
then we can construct such a structure on the other, as well as a $d$-pre-Calabi-Yau morphism between the two. We deduce that there exists minimal models for $d$-pre-Calabi-Yau categories.
Finally, using Lemma \ref{lemma:iso-pCY} and Theorems \ref{thm:2-pcy} and \ref{thm:3-pcy}, we show that a quasi-isomorphism of $d$-pre-Calabi-Yau categories admits a quasi-inverse (see Theorem \ref{thm:q-iso-pCY}).

\textbf{Acknowledgements.} This work is part of a PhD thesis supervised by Estanislao Herscovich and Hossein Abbaspour. The author thanks them for the useful advice and comments about this paper. She also thanks J. Leray and B. Vallette for explaining their results, their availability to discuss and all the advice concerning the present paper. 

This work was supported by the French National Research Agency in the framework of the ``France 2030" program (ANR-15-IDEX-0002) and the LabEx PERSYVAL-Lab (ANR-11-LABX-0025-01).

\section{Notations and conventions} \label{not-conv}
In what follows $\kk$ will be a field of characteristic different from $2$ and $3$ and to simplify we will denote $\otimes$ for $\otimes_{\kk}$. We will denote by $\NN = \{0,1,2,\dots \}$ the set of natural numbers and we define $\NN^*=\NN\setminus\{0\}$.
For $i,j\in\NN$, we define the interval of integers $\llbracket i,j\rrbracket=\{n\in\NN | i\leq n\leq j\}$. 

Recall that if we have a (cohomologically) graded vector space $V=\oplus_{i\in\ZZ}V^i$, we define for $n\in\ZZ$ the graded vector space $V[n]$ given by $V[n]^i=V^{n+i}$ for $i\in\ZZ$ and the map $s_{V,n} : V\rightarrow V[n]$ whose underlying set theoretic map is the identity. 
Moreover, if $f:V\rightarrow W$ is a morphism of graded vector spaces, we define the map $f[n] : V[n]\rightarrow W[n]$ sending an element $s_{V,n}(v)$ to $s_{W,n}(f(v))$ for all $v\in V$.
We will denote $s_{V,n}$ simply by $s_n$ when there is no possible confusion, and $s_1$ just by $s$.

We now recall the Koszul sign rules, that are the ones we use to determine the signs appearing in this paper. If $V,W$ are graded vector spaces, we have a map
$\tau_{V,W} : V\otimes W\rightarrow W\otimes V$ defined as
\begin{equation}
    \tau_{V,W}(v\otimes w)=(-1)^{|w||v|}w\otimes v
\end{equation}
where $v\in V$ is a homogeneous element of degree $|v|$ and $w\in W$ is a homogeneous element of degree $|w|$. 
More generally, given graded vector spaces $V_1,\dots,V_n$ and $\sigma\in\mathcalboondox{S}_n$, 
we have a map
\[
\tau^{\sigma}_{V_1,\dots,V_n} : V_1\otimes\dots\otimes V_n \rightarrow V_{\sigma^{-1}(1)}\otimes\dots\otimes V_{\sigma^{-1}(n)}
\]
defined as
\begin{equation}
    \tau^{\sigma}_{V_1,\dots,V_n}(v_1\otimes\dots\otimes v_n)=(-1)^{\epsilon}(v_{\sigma^{-1}(1)}\otimes\dots\otimes v_{\sigma^{-1}(n)})
\end{equation}
with
\[
\epsilon=\sum\limits_{\substack{i>j\\ \sigma^{-1}(i)<\sigma^{-1}(j)}} |v_{\sigma^{-1}(i)}||v_{\sigma^{-1}(j)}|
\]
where $v_i\in V_i$ is a homogeneous element of degree $|v_i|$ for $i\in\llbracket 1,n \rrbracket$.

Throughout this paper, when we consider an element $v$ of degree $|v|$ in a graded vector space $V$, we mean a homogeneous element $v$ of $V$.
Also, we will denote by $\id$ the identity map of every space of morphisms, without specifying it.
All the products in this paper will be products in the category of graded vector spaces. Given graded vector spaces $(V_i)_{i\in I}$, we thus have \[
\prod\limits_{i\in I}V_i=\bigoplus\limits_{n\in\ZZ}\prod\limits_{i\in I}V_{i}^{n}\]
where the second product is the usual product of vector spaces.

Given graded vector spaces $V,W$ we will denote by $\Hom_{\kk}(V,W)$ the vector space of $\kk$-linear maps $f : V \rightarrow W$ and by $\shom_{\kk}^d(V,W)$ the vector space of homogeneous $\kk$-linear maps $f : V \rightarrow W$ of degree d, \textit{i.e.} $f(v)\in W^{n+d}$ for all $v\in V^n$.
We assemble them in the graded vector space $\Homgr_{\kk}(V,W) =\bigoplus_{d\in \ZZ} \shom_{\kk}^d(V,W)\subseteq \Hom_{\kk}(V,W)$. We define the \textbf{\textcolor{ultramarine}{graded dual}} of a graded vector space $V=\bigoplus_{n\in\ZZ}V^n$ as the graded vector space $V^{*}=\Homgr_{\kk}(V,\kk)$.
Moreover, given graded vector spaces $V$, $V'$, $W$, $W'$ and homogeneous elements $f\in \Homgr_{\kk}(V,V')$ and $g\in\Homgr_{\kk}(W,W')$, we have that 
\begin{equation}
(f\otimes g)(v\otimes w)=(-1)^{|g||v|}f(v)\otimes g(w)
\end{equation}
for homogeneous elements $v\in V$ and $w\in W$.
Recall that a graded quiver $\MA$ consists of a set of objects $\MO$ together with graded vector spaces ${}_y\MA_x$ for every $x,y\in\MO$.
A dg quiver $\MA$ is a graded quiver such that ${}_y\MA_x$ is a dg vector space for every $x,y\in\MO$.
Given a quiver $\MA$, its \textbf{\textcolor{ultramarine}{enveloping graded quiver}} is the graded quiver $\MA^{e}=\MA^{op}\otimes \MA$ whose set of objects is $\MO\times \MO$ and whose space of morphisms from an object $(x,y)$ to an object $(x',y')$ is 
defined as the graded vector space ${}_{(x',y')}(\MA^{op}\otimes \MA)_{(x,y)}={}_{x}\MA_{x'}\otimes {}_{y'}\MA_{y}$.
Given graded quivers $\MA$ and $\MB$ with respective sets of objects $\MO_{\MA}$ and $\MO_{\MB}$ , a \textbf{\textcolor{ultramarine}{morphism of graded quivers}} $(\Phi_0,\Phi) : \MA \rightarrow \MB$ is the data of a map $\Phi_0 : \MO_{\MA}\rightarrow \MO_{\MB}$ between the sets of objects together with a collection $\Phi=(\Phi^{x,y})_{x,y\in\MO_{\MA}}$ of morphisms of graded vector spaces $\Phi^{x,y} : {}_x\MA_y\rightarrow {}_{\Phi_0(x)}\MB_{\Phi_0(y)}$ for every $x,y\in\MO_{\MA}$.
A \textbf{\textcolor{ultramarine}{dg quiver}} is a graded quiver $\MA$ with set of objects $\MO$ together with a collection $d_{\MA}=(d_{\MA}^{x,y})_{x,y\in\MO}$, where $({}_{x}\MA_y,d_{\MA}^{x,y})$ is a dg vector space for each $x,y\in\MO$, called the \textbf{\textcolor{ultramarine}{differential of $\MA$}}. To simplify, we will often omit the objects when writing the differential.
Given dg quivers $(\MA,d_{\MA})$ and $(\MB,d_{\MB})$ with respective sets of objects $\MO_{\MA}$ and $\MO_{\MB}$ , a \textbf{\textcolor{ultramarine}{morphism of dg quivers}} is a morphism of graded quivers $(\Phi_0,\Phi) : \MA \rightarrow \MB$ such that $\Phi^{x,y}$ is a chain map for every $x,y\in\MO_{\MA}$. 

Recall that given a dg quiver $(\MA,d_{\MA})$ with set of objects $\MO$ its \textbf{\textcolor{ultramarine}{cohomology}} is the graded quiver $H(\MA)$ whose objects are those of $\MA$ and whose spaces of morphisms are defined by 
\[
{}_{x}H(\MA)_{y}=\bigoplus\limits_{n\in\ZZ}{}_{x}H(\MA)^n_{y}=\bigoplus\limits_{n\in\ZZ}\ker((d_{\MA}^{x,y})^n)/\operatorname{im}((d_{\MA}^{x,y})^{n-1}
\]

We will denote \[
\bar{\MO}=\bigsqcup_{n\in\NN^*}\MO^n
\]and more generally, we will denote by $\doubar{\MO}$ the set formed by all finite tuples of elements of $\bar{\MO}$, \textit{i.e.} 
\begin{equation}
\begin{split}
\bar{\bar{\MO}}=\bigsqcup_{n\in\NN^*} \bar{\MO}^n=\bigsqcup_{n>0}\bigsqcup_{(p_1,\dots,p_n)\in\mathcal{T}_n}\MO^{p_1}\times\dots\times \MO^{p_n}
\end{split}
\end{equation}  
where $\mathcal{T}_n=\NN^n$ for $n>1$ and $\mathcal{T}_1=\NN^*$.
Given $\bar{x}=(x_1,\dots,x_n)\in\bar{\MO}$ we define its \textbf{\textcolor{ultramarine}{length}} as $\llg(\bar{x})=n$, its \textbf{\textcolor{ultramarine}{left term}} as $\llt(\bar{x})=x_1$ and \textbf{\textcolor{ultramarine}{right term}} as $\rrt(\bar{x})=x_n$. For $i\in\llbracket 1,n\rrbracket$, we define $\bar{x}_{\leq i}=(x_1,\dots,x_i)$, $\bar{x}_{\geq i}=(x_i,\dots,x_n)$ and for $j>i$, $\bar{x}_{\llbracket i,j\rrbracket}=(x_i,x_{i+1},\dots,x_j)$. One can similarly define $\bar{x}_{<i}$ and $\bar{x}_{>i}$.
Moreover, given $\doubar{x}=(\bar{x}^1,\dots,\bar{x}^n)\in\doubar{\MO}$ we define its \textbf{\textcolor{ultramarine}{length}} as $\llg(\doubar{x})=n$, its \textbf{\textcolor{ultramarine}{size}} as $N(\doubar{x})=\sum_{1\leq i\leq n}\llg(\bar{x}^i)$ its \textbf{\textcolor{ultramarine}{left term}} as $\llt(\doubar{x})=\bar{x}^1$ and its \textbf{\textcolor{ultramarine}{right term}} as $\rrt(\doubar{x})=\bar{x}^n$. 
For $\bar{x}=(x_1,\dots,x_n)\in\bar{\MO}$, we will denote 
\[
\MA^{\otimes \bar{x}}={}_{x_1}\MA_{x_2}\otimes {}_{x_2}\MA_{x_3}\otimes\dots\otimes {}_{x_{\llg(\bar{x})-1}}\MA_{x_{\llg(\bar{x})}}
\]
and we will often denote an element of $\MA^{\otimes\bar{x}}$ as $a_1,a_2,\dots,a_{\llg(\bar{x})-1}$ instead of $a_1\otimes a_2\otimes \dots \otimes a_{\llg(\bar{x})-1}$ for $a_i\in {}_{x_i}\MA_{x_{i+1}}$, $i\in\llbracket 1,\llg(\bar{x})-1\rrbracket$. Moreover, given a tuple $\doubar{x}=(\bar{x}^1,\dots,\bar{x}^n)\in\doubar{\MO}$ we will denote $\MA^{\otimes \doubar{x}}=\MA^{\otimes \bar{x}^1}\otimes \MA^{\otimes \bar{x}^2}\otimes\dots\otimes \MA^{\otimes \bar{x}^n}$.
Given tuples $\bar{x}=(x_1,\dots,x_n),\bar{y}=(y_1,\dots,y_m)\in\bar{\MO}$, we define their concatenation as $\bar{x}\sqcup\bar{y}=(x_1,\dots,x_n,y_1,\dots,y_m)$.
We also define the inverse of a tuple $\bar{x}=(x_1,\dots,x_n)\in\bar{\MO}$ as $\bar{x}^{-1}=(x_n,x_{n-1},\dots,x_1)$.
If $\sigma\in\mathcalboondox{S}_n$ and $\bar{x}=(x_1,\dots,x_n)\in\MO^n$, we define $\bar{x}\cdot\sigma=(x_{\sigma(1)},x_{\sigma(2)},\dots,x_{\sigma(n)})$.
Moreover, given $\sigma\in\mathcalboondox{S}_n$ and $\doubar{x}=(\bar{x}^1,\dots,\bar{x}^n)\in\bar{\MO}^n$, we define $\doubar{x}\cdot\sigma=(\bar{x}^{\sigma(1)},\bar{x}^{\sigma(2)},\dots,\bar{x}^{\sigma(n)})$.
We denote by $C_n$ the subgroup of $\mathcalboondox{S}_n$ generated by the cycle $\sigma=(1 2\dots n)$ which sends $i\in \llbracket 1,n-1\rrbracket $ to $i+1$ and $n$ to $1$.

In this paper, we will often make the use of the graphical calculus introduced in \cite{ktv}. 
We refer the reader to \cite{moi} for a detailed account of the involved definitions and terminology which we shall follow in this article. Note that given a diagram, we will often omit the corresponding bold arrow, 
\textit{i.e.} the bold arrow can be any of the outgoing arrows of the diagram.

\section{\texorpdfstring{Transferring $A_{\infty}$-structures}
{Transferring-A-infinity-structures}}
\label{A-inf-case}
This section is devoted to recall the notions of $A_{\infty}$-categories and $A_{\infty}$-morphisms.
For convenience of the reader, we also present the results appearing in \cite{petersen} and we give proofs entirely in terms of the structure of the $A_{\infty}$-category, instead of the associated bar constructions, which is the case of \cite{petersen}.
This is merely done in order to allow the comparison with the case of pre-Calabi-Yau categories appearing in the next section. We do not claim any originality in this regard. 
\begin{definition}
    Given graded quivers $\MA$ and $\MB$ with respective set of objects $\MO_{\MA}$ and $\MO_{\MB}$ as well as a map $\Phi_0 : \MO_{\MA}\rightarrow \MO_{\MB}$, we define the graded quiver
    \[
    C(\MA,\MB)=\prod\limits_{p\geq 1}\prod\limits_{\bar{x}\in\MO_{\MA}^p}C^{\bar{x}}(\MA,\MB)=\prod\limits_{p\geq 1}\prod\limits_{\bar{x}\in\MO_{\MA}^p}\Homgr_{\kk}(\MA[1]^{\otimes \bar{x}}, {}_{\Phi_0(\llt(\bar{x}))}\MB_{\Phi_0(\rrt(\bar{x}))})
    \]
    We will denote $C(\MA,\MA)$ simply by $C(\MA)$.
\end{definition}

\begin{remark}
    If $(\MA,d_{\MA})$ and $(\MB,d_{\MB})$ are dg quivers, $C(\MA,\MB)$ becomes a dg quiver whose differential is given by 
    $d_{C(\MA,\MB)}F^{\bar{x}}=d_{\MB}\circ F^{\bar{x}} -(-1)^{|\mathbf{F}|}F^{\bar{x}}\circ d_{\MA[1]^{\otimes \bar{x}}}$
    for a homogeneous element $\mathbf{F}\in C(\MA,\MB)$.
\end{remark}

\begin{definition}
\label{def:A-inf-alg}
An \textbf{\textcolor{ultramarine}{$A_{\infty}$-structure}} on a graded quiver $\MA$ with set of objects $\MO$ is an element of degree $1$ $sm_{\MA}\in C(\MA,\MB)[1]$ satisfying the Stasheff identities introduced by J. Stasheff in \cite{stasheff} and given by
\begin{equation}
    \label{eq:stasheff-identities}
   \tag{$\operatorname{SI}^{\bar{x}}$}
    \sum\limits_{1\leq i <j \leq n}sm_{\MA}^{\bar{x}_{\leq i}\sqcup\bar{x}_{\geq j}}\circ(\id^{\otimes \bar{x}_{\leq i}}\otimes sm_{\MA}^{\bar{x}_{\llbracket i,j\rrbracket}}\otimes \id^{\otimes \bar{x}_{\geq j}})=0
\end{equation}
for each $n\in\NN^*$ and $\bar{x}\in\MO^n$. 
This is tantamount to $[sm_{\MA},sm_{\MA}]_{G}=0$ where $[-,-]_G$ denotes the Gerstenhaber bracket.
In terms of diagrams, this reads $\sum\mathcal{E}(\mathcalboondox{D})=0$ where the sum is over all the filled diagrams $\mathcalboondox{D}$ of type $\bar{x}$ and of the form 



 A graded quiver endowed with an $A_{\infty}$-structure is called an \textbf{\textcolor{ultramarine}{$A_{\infty}$-category}}. 
\end{definition}

\begin{remark}
\label{remark:A-inf-dg}
Note that the identity $(SI^{x,y})$ is simply $sm_{\MA}^{x,y}\circ sm_{\MA}^{x,y}=0$, meaning that an $A_{\infty}$-category $(\MA,sm_{\MA})$ gives rise to a dg quiver $(\MA,(sm_{\MA}^{x,y}[-1])_{x,y\in\MO})$, whose cohomology will be denoted by $H(\MA)$. 
\end{remark}

The following definition is due to M. Sugawara (see \cite{sugawara}).

\begin{definition}
An $A_{\infty}$\textbf{\textcolor{ultramarine}{-morphism}} between $A_{\infty}$-categories $(\MA,sm_{\MA})$ and $(\MB,sm_{\MB})$ with respective sets of objects $\MO_{\MA}$ and $\MO_{\MB}$ is a map $F_0 : \MO_{\MA}\rightarrow \MO_{\MB}$ together with an element $s\mathbf{F}=(sF^{\bar{x}})_{\bar{x}\in\bar{\MO}_{\MA}}\in C(\MA,\MB)[1]$ of degree 0, that satisfies
\begin{equation}
\label{eq:stasheff-morphisms}
\tag{$\operatorname{MI}^{\bar{x}}$}
\begin{split}
    \sum\limits_{1\leq i<j\leq \llg(\bar{x})}sF^{\bar{x}_{\leq i}\sqcup\bar{x}_{\geq j}}&\circ(\id^{\otimes \bar{x}_{\leq i}}\otimes sm_{\MA}^{\bar{x}_ {\llbracket i,j\rrbracket}}\otimes \id^{\otimes \bar{x}_{\geq j}})\\&=\sum\limits_{1\leq i_1<\dots<i_n\leq \llg(\bar{x})}sm_{\MB}^{\bar{y}}(sF^{\bar{x}_{\leq i_1}}\otimes sF^{\bar{x}_{\llbracket i_1,i_2\rrbracket}}\otimes\dots\otimes sF^{\bar{x}_{\llbracket i_n,\llg(\bar{x})\rrbracket}})
    \end{split}
\end{equation}
for every $\bar{x}\in\bar{\MO}_{\MA}$ and with $\bar{y}=(F_0(x_1),F_0(x_{i_1}),\dots,F_0(x_{\llg(\bar{x})}))$. This is tantamount to 
\[
s\mathbf{F}\upperset{G}{\circ} sm_{\MA}=sm_{\MB}\upperset{M}{\circ} s\mathbf{F}
\]
where the composition $(sm_{\MB}\upperset{M}{\circ}s\mathbf{F})^{\bar{x}}$ is the right hand side of the identity \eqref{eq:stasheff-morphisms}. 
Note that given $\bar{x}\in\MO_{\MA}$ the \\\vskip-4mm\noindent terms in both sums are sums of maps associated with diagrams of type $\bar{x}$ that are of the form 

\begin{minipage}{21cm}


\end{minipage}

\noindent respectively.
\end{definition}

\begin{definition}
    Let $(\MA,sm_{\MA})$, $(\MB,sm_{\MB})$ and $(\mathcal{C},sm_{\mathcal{C}})$ be $A_{\infty}$-categories and consider $A_{\infty}$-morphisms $(F_0,s\mathbf{F}) : (\MA,sm_{\MA})\rightarrow (\MB,sm_{\MB})$ and $(G_0,s\mathbf{G}) : (\MB,sm_{\MB})\rightarrow (\mathcal{C},sm_{\mathcal{C}})$. The \textbf{\textcolor{ultramarine}{composition of $(F_0,s\mathbf{F})$ and $(G_0,s\mathbf{G})$}} is the pair $(G_0\circ F_0,s\mathbf{G}\circ s\mathbf{F})$ where the element $(s\mathbf{G}\circ s\mathbf{F})\in C(\MA,\mathcal{C})[1]$ is defined by
    \begin{equation}
        \label{eq:compo-Ainf}
        (s\mathbf{G}\circ s\mathbf{F})^{\bar{x}}=\sum\limits_{1\leq i_1<\dots<i_n\leq \llg(\bar{x})}sG^{\bar{y}}(sF^{\bar{x}_{\leq i_1}}\otimes sF^{\bar{x}_{\llbracket i_1,i_2\rrbracket}}\otimes\dots\otimes sF^{\bar{x}_{\llbracket i_n,\llg(\bar{x})\rrbracket}})
    \end{equation}
    for every $\bar{x}\in\bar{\MO}_{\MA}$ and with $\bar{y}=(F_0(x_1),F_0(x_{i_1}),\dots,F_0(x_{\llg(\bar{x})}))$.
\end{definition}

\begin{lemma}
    The data of $A_{\infty}$-categories together with $A_{\infty}$-morphisms and their composition give rise to a category denoted $A_{\infty}$.
    Moreover, given an $A_{\infty}$-category $(\MA,sm_{\MA})$, the \textbf{\textcolor{ultramarine}{$A_{\infty}$-identity morphism on $\MA$}} is the pair $(Id_{\MA},\id_{\MA})$ where $Id_{\MA}$ is the identity map on the set of objects of $\MA$ and $\id_{\MA}$ is the $A_{\infty}$-morphism whose component $\bar{x}$ is $\id_{{}_{x}\MA_{y}[1]}$ if $\bar{x}=(x,y)$ and $0$ if $\llg(\bar{x})>2$. We will denote the pair $(Id_{\MA},\id_{\MA})$ by $(Id,\id)$ when there is no possible confusion.
\end{lemma}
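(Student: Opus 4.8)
The plan is to check the four axioms that turn the stated data into a category: (i) the composite of two $A_{\infty}$-morphisms is again an $A_{\infty}$-morphism, (ii) composition is associative, (iii) the pair $(Id_{\MA},\id_{\MA})$ satisfies the morphism identity \eqref{eq:stasheff-morphisms}, and (iv) it is a two-sided unit for composition. I first record the book-keeping observation that, comparing \eqref{eq:compo-Ainf} with the right-hand side of \eqref{eq:stasheff-morphisms}, the composition of morphisms coincides with the product $\upperset{M}{\circ}$ carrying the outer morphism in the slot of the structure map, so that $s\mathbf{G}\circ s\mathbf{F}=s\mathbf{G}\upperset{M}{\circ}s\mathbf{F}$; since each cut of a fixed tuple $\bar{x}$ yields only finitely many terms, this is well defined, lands in $C(\MA,\mathcal{C})[1]$ and is of degree $0$.

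The core is axiom (i). Writing $s\mathbf{H}=s\mathbf{G}\circ s\mathbf{F}$, I must show $s\mathbf{H}\upperset{G}{\circ}sm_{\MA}=sm_{\mathcal{C}}\upperset{M}{\circ}s\mathbf{H}$. Fixing $\bar{x}\in\bar{\MO}_{\MA}$, I would expand the left-hand side and establish the chain
\[
(s\mathbf{G}\circ s\mathbf{F})\upperset{G}{\circ}sm_{\MA}=\big(s\mathbf{G}\upperset{G}{\circ}sm_{\MB}\big)\upperset{M}{\circ}s\mathbf{F}=\big(sm_{\mathcal{C}}\upperset{M}{\circ}s\mathbf{G}\big)\upperset{M}{\circ}s\mathbf{F}=sm_{\mathcal{C}}\upperset{M}{\circ}(s\mathbf{G}\circ s\mathbf{F}).
\]
In the first equality the single inserted copy of $sm_{\MA}$ necessarily falls inside one of the blocks to which $s\mathbf{F}$ is applied; grouping the terms by the induced decomposition of $\bar{x}$, the inner sum over the position of $sm_{\MA}$ within the absorbing block is exactly the left-hand side of \eqref{eq:stasheff-morphisms} for $F$ evaluated on that block, which I replace by its right-hand side $sm_{\MB}\upperset{M}{\circ}s\mathbf{F}$. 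Re-reading the result, one copy of $sm_{\MB}$ is now applied to a consecutive run of $s\mathbf{F}$-outputs before $s\mathbf{G}$, which is precisely $\big(s\mathbf{G}\upperset{G}{\circ}sm_{\MB}\big)\upperset{M}{\circ}s\mathbf{F}$. The second equality is \eqref{eq:stasheff-morphisms} for $G$, read as the equality $s\mathbf{G}\upperset{G}{\circ}sm_{\MB}=sm_{\mathcal{C}}\upperset{M}{\circ}s\mathbf{G}$ in $C(\MB,\mathcal{C})$ and inserted inside $\upperset{M}{\circ}s\mathbf{F}$, and the third is the associativity of $\upperset{M}{\circ}$.

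For (ii), associativity of morphism composition is exactly associativity of $\upperset{M}{\circ}$, namely $(s\mathbf{H}\upperset{M}{\circ}s\mathbf{G})\upperset{M}{\circ}s\mathbf{F}=s\mathbf{H}\upperset{M}{\circ}(s\mathbf{G}\upperset{M}{\circ}s\mathbf{F})$, which follows by checking that both sides, evaluated on $\bar{x}$, are the same sum over pairs of compatible cuts of $\bar{x}$. For (iii), since $\id_{\MA}$ is concentrated on tuples of length $2$, every surviving summand of \eqref{eq:stasheff-morphisms} forces all the relevant blocks to have length $2$: on the left only the term $i=1$, $j=\llg(\bar{x})$ remains and returns $sm_{\MA}^{\bar{x}}$, while on the right only the finest cut survives and also returns $sm_{\MA}^{\bar{x}}$, so the identity holds. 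For (iv), in $s\mathbf{F}\circ\id_{\MA}$ the components of $\id_{\MA}$, being supported in length $2$, force the finest cut and recover $s\mathbf{F}^{\bar{x}}$, whereas in $\id_{\MB}\circ s\mathbf{F}$ they force the trivial one-block cut and again recover $s\mathbf{F}^{\bar{x}}$.

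The main obstacle is not the conceptual skeleton above, which is simply a structural transcription of the fact that on bar constructions these statements are read off from the composition of coalgebra morphisms intertwining the codifferentials; rather it is the control of the Koszul signs. Concretely, the re-indexing bijection in the first equality of the closure chain, and the identification underlying the associativity of $\upperset{M}{\circ}$, each rearrange tensor factors and shifts $s$, and one must verify that the signs produced by $\tau^{\sigma}$ on the two sides agree term by term. I would isolate these sign computations as a preliminary lemma about the interaction of $\upperset{G}{\circ}$ and $\upperset{M}{\circ}$ and then feed it into the four verifications above.
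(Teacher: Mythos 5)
The paper itself states this lemma without any proof: it is recalled as standard material (the analogous statement for pre-Calabi-Yau categories, later in the paper, is likewise disposed of by citation), the implicit classical argument being that under the bar construction $A_{\infty}$-morphisms correspond to morphisms of dg coalgebras intertwining the codifferentials, for which closure under composition, associativity and units are immediate. Your proof is therefore correct but follows a genuinely different, direct route: the book-keeping identification $s\mathbf{G}\circ s\mathbf{F}=s\mathbf{G}\upperset{M}{\circ}s\mathbf{F}$, the three-step chain
$(s\mathbf{G}\circ s\mathbf{F})\upperset{G}{\circ}sm_{\MA}=(s\mathbf{G}\upperset{G}{\circ}sm_{\MB})\upperset{M}{\circ}s\mathbf{F}=(sm_{\mathcal{C}}\upperset{M}{\circ}s\mathbf{G})\upperset{M}{\circ}s\mathbf{F}=sm_{\mathcal{C}}\upperset{M}{\circ}(s\mathbf{G}\circ s\mathbf{F})$
with its bijection between (cut, insertion inside a block) and (cut, marked consecutive run of blocks), the nested-cut argument for associativity of $\upperset{M}{\circ}$, and the observation that the length-two support of $\id_{\MA}$ forces the extreme cuts in the unit verifications are all sound. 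This is exactly in the spirit of the paper's announced methodology of arguing in terms of the components themselves rather than the bar construction, so your argument supplies a proof the paper omits, at the cost of the combinatorial re-indexing that the coalgebra picture renders invisible. One remark on the step you defer: the sign lemma you call for is in fact a one-line observation rather than a genuine obstacle. Every regrouping you perform is an instance of the interchange law $(f_1\otimes\cdots\otimes f_k)\circ(g_1\otimes\cdots\otimes g_k)=(-1)^{\epsilon}(f_1\circ g_1)\otimes\cdots\otimes(f_k\circ g_k)$ with $\epsilon=\sum_{i<j}|f_j||g_i|$, and in each of your expressions one of the two tensor rows consists entirely of components of $s\mathbf{F}$ or $s\mathbf{G}$ (degree $0$) and identity maps, while the other contains at most one structure map $sm$ of degree $1$; hence every product $|f_j||g_i|$ has a vanishing factor and $\epsilon=0$. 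So no sign corrections appear anywhere in your chain, and the proof closes.
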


\begin{definition}
      Consider $A_{\infty}$-categories $(\MA,sm_{\MA})$ and $(\MB,sm_{\MB})$ and an $A_{\infty}$-morphism between them $(F_0,s\mathbf{F}):(\MA,sm_{\MA})\rightarrow (\MB,sm_{\MB})$. We say that $(F_0,s\mathbf{F})$ is a \textbf{\textcolor{ultramarine}{quasi-isomorphism}} if $F_0$ is a bijection and $sF^{x,y} : {}_{x}A_{y}[1]\rightarrow {}_{x}B_{y}[1]$ is a quasi-isomorphism of dg vector spaces for every $x,y\in\MO_{\MA}$, where the differentials of $\MA$ and $\MB$ are induced by their $A_{\infty}$-structures as in Remark \ref{remark:A-inf-dg}.
    Moreover, $(F_0,s\mathbf{F})$ is an \textbf{\textcolor{ultramarine}{isomorphism}} if there exists an $A_{\infty}$-morphism $(G_0,s\mathbf {G}):\MB\rightarrow \MA$ such that $G_0\circ F_0=Id_{\MA}$, $F_0\circ G_0=Id_{\MB}$, $s\mathbf{G}\circ s\mathbf{F}=\id_{\MA}$ and $s\mathbf{F}\circ s\mathbf{G}=\id_{\MB}$.
\end{definition}

We now recall the following result and give a proof of it without using the bar construction.

\begin{lemma}
\label{lemma:iso-A-inf}
    Let $(\MA,sm_{\MA})$, $(\MB,sm_{\MB})$ be $A_{\infty}$-categories and $(F_0,s\mathbf{F}) : (\MA,sm_{\MA})\rightarrow (\MB,sm_{\MB})$ be an $A_{\infty}$-morphism.
    Suppose that $F_0$ is a bijection and that $sF^{x,y}$ is an isomorphism of dg vector spaces for every $x,y\in\MO_{\MA}$. 
    Then, $(F_0,s\mathbf{F})$ is an isomorphism of $A_{\infty}$-categories.
\end{lemma}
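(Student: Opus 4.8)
The plan is to construct an explicit inverse $(G_0,s\mathbf{G}):(\MB,sm_{\MB})\to(\MA,sm_{\MA})$ by induction on the length of tuples, using that the linear components $sF^{x,y}$ are isomorphisms. First I would set $G_0=F_0^{-1}$, which is legitimate since $F_0$ is a bijection, and define the length-two components by $sG^{u,v}=(sF^{F_0^{-1}(u),F_0^{-1}(v)})^{-1}$ for $u,v\in\MO_{\MB}$; this is exactly the requirement that $(s\mathbf{G}\circ s\mathbf{F})^{x,y}=\id$ for all $x,y$, i.e.\ that $s\mathbf{G}\circ s\mathbf{F}=\id_{\MA}$ holds in length two. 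For the inductive step I would fix $N>2$ and examine the composition formula \eqref{eq:compo-Ainf} for $(s\mathbf{G}\circ s\mathbf{F})^{\bar{x}}$ with $\llg(\bar{x})=N$: the only summand in which $s\mathbf{G}$ is evaluated on a tuple of length $N$ is $sG^{(F_0(x_1),\dots,F_0(x_N))}\circ(sF^{x_1,x_2}\otimes\dots\otimes sF^{x_{N-1},x_N})$, while every other summand involves only components $sG^{\bar{z}}$ with $\llg(\bar{z})<N$, already fixed by the induction hypothesis. Since $sF^{x_1,x_2}\otimes\dots\otimes sF^{x_{N-1},x_N}$ is an isomorphism and every tuple of $\MO_{\MB}^{N}$ is uniquely of the form $(F_0(x_1),\dots,F_0(x_N))$, the equation $(s\mathbf{G}\circ s\mathbf{F})^{\bar{x}}=\id_{\MA}^{\bar{x}}=0$ determines $sG^{(F_0(x_1),\dots,F_0(x_N))}$ uniquely. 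This yields a collection $s\mathbf{G}\in C(\MB,\MA)[1]$ of degree $0$ with $s\mathbf{G}\circ s\mathbf{F}=\id_{\MA}$.

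Next I would upgrade this left inverse to a two-sided inverse. The same recursion, run with the roles of $\MA$ and $\MB$ exchanged, produces a collection $s\mathbf{G}'$ with $s\mathbf{F}\circ s\mathbf{G}'=\id_{\MB}$ (again because the linear parts of $\mathbf{F}$ are isomorphisms). Using associativity of the composition $\circ$, which is given by the same formula \eqref{eq:compo-Ainf} on arbitrary collections and is readily checked to be associative, one gets $s\mathbf{G}=s\mathbf{G}\circ(s\mathbf{F}\circ s\mathbf{G}')=(s\mathbf{G}\circ s\mathbf{F})\circ s\mathbf{G}'=s\mathbf{G}'$. Hence $s\mathbf{G}$ is a two-sided inverse: $s\mathbf{G}\circ s\mathbf{F}=\id_{\MA}$ and $s\mathbf{F}\circ s\mathbf{G}=\id_{\MB}$, while on objects $G_0\circ F_0=Id_{\MA}$ and $F_0\circ G_0=Id_{\MB}$.

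It remains to verify that $s\mathbf{G}$ satisfies the morphism identities \eqref{eq:stasheff-morphisms}, namely $s\mathbf{G}\upperset{G}{\circ}sm_{\MB}=sm_{\MA}\upperset{M}{\circ}s\mathbf{G}$. Here I would imitate, directly at the level of $A_{\infty}$-structures, the formal computation that the inverse of the associated coalgebra map commutes with the codifferentials. Composing the desired equality on the right with $s\mathbf{F}$ and using that $s\mathbf{F}$ is a morphism, $s\mathbf{F}\upperset{G}{\circ}sm_{\MA}=sm_{\MB}\upperset{M}{\circ}s\mathbf{F}$, together with $s\mathbf{G}\circ s\mathbf{F}=\id_{\MA}$, both sides collapse to $sm_{\MA}$; since $s\mathbf{F}\circ s\mathbf{G}=\id_{\MB}$ makes $s\mathbf{F}$ right-cancellable for $\circ$, the identity $s\mathbf{G}\upperset{G}{\circ}sm_{\MB}=sm_{\MA}\upperset{M}{\circ}s\mathbf{G}$ follows. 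This shows $(G_0,s\mathbf{G})$ is an $A_{\infty}$-morphism and a two-sided inverse of $(F_0,s\mathbf{F})$, i.e.\ an isomorphism.

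I expect the last step to be the main obstacle. In the bar-construction picture the operations $\circ$, $\upperset{G}{\circ}$ and $\upperset{M}{\circ}$ are all instances of composing coalgebra maps and coderivations, so the compatibility relations I used above are automatic; in the present structure-level formulation they must be proved by hand, e.g.\ $(s\mathbf{G}\upperset{G}{\circ}sm_{\MB})\circ s\mathbf{F}=s\mathbf{G}\upperset{G}{\circ}(sm_{\MB}\upperset{M}{\circ}s\mathbf{F})$, $s\mathbf{G}\upperset{G}{\circ}(s\mathbf{F}\upperset{G}{\circ}sm_{\MA})=(s\mathbf{G}\circ s\mathbf{F})\upperset{G}{\circ}sm_{\MA}$ and $(sm_{\MA}\upperset{M}{\circ}s\mathbf{G})\circ s\mathbf{F}=sm_{\MA}\upperset{M}{\circ}(s\mathbf{G}\circ s\mathbf{F})$. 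Each is a form of associativity of these nested sums over cuts of a tuple, and the real care lies in tracking the Koszul signs produced by the shifts rather than in the (formal) inductive inversion itself.
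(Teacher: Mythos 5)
Your first two steps coincide with the paper's proof: the paper defines $sG^{\bar{x}}$ for $\llg(\bar{x})>2$ by a diagrammatic formula that is exactly the solution of the recursion you describe (the unique length-$N$ component forcing $(s\mathbf{G}\circ s\mathbf{F})^{\bar{x}}=0$, available because $F_0$ is a bijection and the $sF^{x,y}$ are invertible), and it promotes this one-sided inverse to a two-sided one just as you do, by building an inverse on the other side and invoking associativity of the composition $\circ$.

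The last step is where your write-up has a concrete flaw, and it is precisely at the identities you single out as the crux. The relation $(s\mathbf{G}\upperset{G}{\circ}sm_{\MB})\circ s\mathbf{F}=s\mathbf{G}\upperset{G}{\circ}(sm_{\MB}\upperset{M}{\circ}s\mathbf{F})$ is not correct as stated: $sm_{\MB}\upperset{M}{\circ}s\mathbf{F}$ lies in $C(\MA,\MB)[1]$, so inserting it into one input of $s\mathbf{G}\in C(\MB,\MA)[1]$ with \emph{identities} in the remaining inputs does not even produce an element of $C(\MA,\MA)[1]$ (those remaining inputs are $\MB$-inputs, while the free inputs of the composite must be $\MA$-inputs). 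If you expand $(s\mathbf{G}\upperset{G}{\circ}sm_{\MB})\circ s\mathbf{F}$ you see that every remaining input of $s\mathbf{G}$ receives a component of $s\mathbf{F}$, not an identity; the correct statement is $(s\mathbf{G}\upperset{G}{\circ}sm_{\MB})\circ s\mathbf{F}=s\mathbf{G}\upperset{s\mathbf{F}}{\circ}(sm_{\MB}\upperset{M}{\circ}s\mathbf{F})$, where $\upperset{s\mathbf{F}}{\circ}$ is the analogue for $s\mathbf{G}$ of the paper's ``composition with respect to $s\mathbf{F}$'' (the operation appearing in Lemma \ref{lemma:identities}); likewise your second identity should read $s\mathbf{G}\upperset{s\mathbf{F}}{\circ}(s\mathbf{F}\upperset{G}{\circ}sm_{\MA})=(s\mathbf{G}\circ s\mathbf{F})\upperset{G}{\circ}sm_{\MA}$, while the third one is fine. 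With these repairs your cancellation argument does close up: $(s\mathbf{G}\upperset{G}{\circ}sm_{\MB})\circ s\mathbf{F}=s\mathbf{G}\upperset{s\mathbf{F}}{\circ}(s\mathbf{F}\upperset{G}{\circ}sm_{\MA})=(s\mathbf{G}\circ s\mathbf{F})\upperset{G}{\circ}sm_{\MA}=sm_{\MA}=(sm_{\MA}\upperset{M}{\circ}s\mathbf{G})\circ s\mathbf{F}$, and then right-cancel $s\mathbf{F}$ using $s\mathbf{F}\circ s\mathbf{G}=\id_{\MB}$. Note that the paper sidesteps the typing issue altogether by conjugating in the other order: it applies $\upperset{M}{\circ}\,s\mathbf{G}$ and then $s\mathbf{G}\upperset{G}{\circ}(-)$ to both sides of $s\mathbf{F}\upperset{G}{\circ}sm_{\MA}=sm_{\MB}\upperset{M}{\circ}s\mathbf{F}$, so that everything inserted into $s\mathbf{G}$ is an endo-collection on $\MB$ and $\upperset{G}{\circ}$ stays well-typed, and then kills the superfluous diagrams using $s\mathbf{F}\circ s\mathbf{G}=\id_{\MB}$; once your identities are corrected, your computation is essentially this one read backwards.
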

\begin{proof}
Since $F_0$ is a bijection, it has an inverse $G_0 : \MO_{\MB}\rightarrow \MO_{\MA}$.
    Similarly, since $sF^{x,y}$ is an isomorphism of dg vector spaces for every $x,y\in\MO_{\MA}$ it has an inverse that we denote by $sG^{F_0(x),F_0(y)}$. Note that the bijectivity of $F_0$ allows us to define $sG^{x,y}$ for each $x,y\in\MO_{\MB}$.
    Consider $\bar{x}\in\bar{\MO}_{\MB}$ with $\llg(\bar{x})>2$ and suppose that we have constructed maps $sG^{\bar{y}} : \MB[1]^{\otimes \bar{y}}\rightarrow {}_{G_0(\llt(\bar{x}))}\MA_{G_0(\rrt(\bar{x}))}[1]$ satisfying that $(s\mathbf{G}\circ s\mathbf{F})^{\bar{y}}=0$ for every $\bar{y}\in\bar{\MO}_{\MB}$ with $2<\llg(\bar{y})<\llg(\bar{x})$.
   We define $sG^{\bar{x}}=-\sum\mathcal{E}(\mathcalboondox{D})$ where the sum is over all the filled diagrams $\mathcalboondox{D}$ of type $\bar{x}$ and of the form



\noindent where the dotted lines represent discs with one input and one output filled with $\mathbf{G}$.
    Therefore, we have constructed an element $s\mathbf{G}=(sG^{\bar{x}})_{\bar{x}\in\bar{\MO}_{\MB}}\in C(\MB,\MA)[1]$ such that $s\mathbf{G}\circ s\mathbf{F}=\id_{\MA}$. 
    Note that we did not use the fact that $s\mathbf{F}$ satisfies the identities \eqref{eq:stasheff-morphisms} for every $\bar{x}\in\bar{\MO}_{\MA}$. 
    Thus, there exists an element $s\mathbf{H}\in C(\MA,\MB)[1]$ such that $s\mathbf{H}\circ s\mathbf{G}=\id_{\MB}$. Using the associativity of the composition, we get that $s\mathbf{H}=s\mathbf{F}$ and thus $s\mathbf{G}\circ s\mathbf{F}=\id_{\MA}$ and $s\mathbf{F} \circ s\mathbf{G}=\id_{\MB}$.
    
    It remains to show that $s\mathbf{G}$ satisfies the identities \eqref{eq:stasheff-morphisms} for every $\bar{x}\in\bar{\MO}_{\MB}$. 
    Consider $\bar{x}\in\bar{\MO}_{\MB}$ and suppose that $s\mathbf{G}$ satisfies the identities $(\operatorname{MI}^{\bar{y}})$ for all $\bar{y}\in\bar{\MO}_{\MB}$ such that $\llg(\bar{y})<\llg(\bar{x})$.
    Since $(F_0,s\mathbf{F})$ is an $A_{\infty}$-morphism, we have that $s\mathbf{F}\upperset{G}{\circ}sm_{\MA}=sm_{\MB}\upperset{M}{\circ}s\mathbf{F}$ and then $(s\mathbf{F}\upperset{G}{\circ}sm_{\MA})\upperset{M}{\circ}s\mathbf{G}=(sm_{\MB}\upperset{M}{\circ}s\mathbf{F})\upperset{M}{\circ}s\mathbf{G}$.  \\ \vskip-3.8mm\noindent Therefore, we
     have that $s\mathbf{G}\upperset{G}{\circ}\big((s\mathbf{F}\upperset{G}{\circ}sm_{\MA})\upperset{M}{\circ}s\mathbf{G}\big)=s\mathbf{G}\upperset{G}{\circ}\big((sm_{\MB}\upperset{M}{\circ}s\mathbf{F})\upperset{M}{\circ}s\mathbf{G}\big)$. It is clear that the right \\ \vskip-3.8mm\noindent member of this equality is $s\mathbf{G}\upperset{G}{\circ}sm_{\MB}$ since $s\mathbf{F}\circ s\mathbf{G}=\id$.
     
     We now have to show that $s\mathbf{G}\upperset{G}{\circ}\big((s\mathbf{F}\upperset{G}{\circ}sm_{\MA})\upperset{M}{\circ}s\mathbf{G}\big)=sm_{\MA}\upperset{M}{\circ}s\mathbf{G}$.
    First, note that we have that \\ \vskip-3.8mm\noindent $\big((s\mathbf{F}\upperset{G}{\circ}sm_{\MA})\upperset{M}{\circ}s\mathbf{G}\big)^{\bar{x}}=\sum\mathcal{E}(\mathcalboondox{D_1})$ where the sum is over all the filled diagrams $\mathcalboondox{D_1}$ of type $\bar{x}$ and of the form 



\noindent where one of the two bigger discs filled with $\mathbf{F}$ and $\mathbf{G}$ has strictly more than one incoming arrow.
Moreover, $\sum\mathcal{E}(\mathcalboondox{D_2})=0$ since $s\mathbf{F}\circ s\mathbf{G}=\id_{\MB}$. Then, $s\mathbf{G}\upperset{G}{\circ}\big((s\mathbf{F}\upperset{G}{\circ}sm_{\MA})\upperset{M}{\circ}s\mathbf{G}\big)=sm_{\MA}\upperset{M}{\circ}s\mathbf{G}$ which \\ \vskip-3.8mm\noindent finishes the proof.
\end{proof}

\begin{definition}
    Let $\MA$ and $\MB$ be two graded quivers and consider $s\mathbf{F}\in C(\MA,\MB)[1]$. 
    Given elements $sm\in C(\MB)[1]$
    and $sh\in C(\MA,\MB)[1]$
    we define the \textbf{\textcolor{ultramarine}{composition of $sm$ and $sh$ with respect to $s\mathbf{F}$}} as the element $sm\upperset{\scriptscriptstyle{s\mathbf{F}}}{\circ} sh\in C(\MA,\MB)[1]$
   given by $(sm\upperset{\scriptscriptstyle{s\mathbf{F}}}{\circ} sh)^{\bar{x}}=\sum\mathcal{E}(\mathcalboondox{D})$ where the sum is over all the filled diagrams \\\vskip-3.8mm\noindent $\mathcalboondox{D}$ of type $\bar{x}$ and of the form 


    
\end{definition}

\begin{lemma}
\label{lemma:identities}
 Let $\MA$ and $\MB$ be two graded quivers.
Consider $sm_{\MA}\in C(\MA)[1]$ and $sm_{\MB}\in C(\MB)[1]$ of degree $1$ and $s\mathbf{F}\in C(\MA,\MB)[1]$ of degree $0$.
Then, the following identities hold.
\begin{equation}
    \label{eq:associativity-mA}
    sm_{\MA}\upperset{\scriptscriptstyle{G}}{\circ}(sm_{\MA}\upperset{\scriptscriptstyle{G}}{\circ}sm_{\MA})=(sm_{\MA}\upperset{\scriptscriptstyle{G}}{\circ}sm_{\MA})\upperset{\scriptscriptstyle{G}}{\circ}sm_{\MA}
\end{equation}
\begin{equation}
    \label{eq:associativity-F-mB}
    (sm_{\MB}\upperset{\scriptscriptstyle{M}}{\circ}s\mathbf{F})\upperset{\scriptscriptstyle{G}}{\circ}sm_{\MA}=sm_{\MB}\upperset{\scriptscriptstyle{s\mathbf{F}}}{\circ}(s\mathbf{F}\upperset{\scriptscriptstyle{G}}{\circ}sm_{\MA})
\end{equation}
\end{lemma}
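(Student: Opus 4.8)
The plan is to prove both identities diagrammatically, in the spirit of the preceding proofs: expand each side as a sum $\sum\mathcal{E}(\mathcalboondox{D})$ over filled diagrams of type $\bar{x}$, and exhibit a sign-preserving matching between the diagrams occurring on the two sides. Throughout I would use that $sm_{\MA}$ is of odd degree, since this is what drives the cancellations.

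For \eqref{eq:associativity-mA} I would first unfold $(sm_{\MA}\upperset{\scriptscriptstyle{G}}{\circ}sm_{\MA})\upperset{\scriptscriptstyle{G}}{\circ}sm_{\MA}$ by inserting the third copy of $sm_{\MA}$ into the two-vertex diagrams produced by $sm_{\MA}\upperset{\scriptscriptstyle{G}}{\circ}sm_{\MA}$. Because the output of each inserted disc is atomic for the next insertion, every resulting diagram has exactly one of two shapes: a \emph{linear} diagram, i.e. a vertical chain of three $m_{\MA}$-discs each plugged into a slot of the one above it; or a \emph{branched} diagram, in which the top disc receives two separate $m_{\MA}$-discs in two distinct slots. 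Unfolding $sm_{\MA}\upperset{\scriptscriptstyle{G}}{\circ}(sm_{\MA}\upperset{\scriptscriptstyle{G}}{\circ}sm_{\MA})$ instead produces exactly the linear diagrams. I would then check that the linear diagrams coincide on the two sides with equal coefficients $\mathcal{E}$, and that the branched diagrams cancel in pairs: a branched diagram whose two lower discs sit at slots $s\neq s'$ arises twice on the left-hand side, once with the $s$-disc inserted first and once with the $s'$-disc inserted first, and since $sm_{\MA}$ is odd these two occurrences carry opposite Koszul signs. Hence the branched contribution vanishes and the two sides agree.

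For \eqref{eq:associativity-F-mB} I would observe that both sides unfold to diagrams of the same global shape: a disc $m_{\MB}$ on top, fed by a row of $\mathbf{F}$-discs, exactly one of which carries an $m_{\MA}$-disc plugged into a consecutive block of its inputs. On the left, $sm_{\MB}\upperset{\scriptscriptstyle{M}}{\circ}s\mathbf{F}$ first yields the $m_{\MB}$-over-$\mathbf{F}$'s diagrams, and then $\upperset{\scriptscriptstyle{G}}{\circ}sm_{\MA}$ inserts $m_{\MA}$ into a consecutive block of $\MA$-inputs; the crucial point is that the $m_{\MA}$-output is atomic and is therefore regrouped \emph{entirely} inside a single $\mathbf{F}$-disc, so no diagram with $m_{\MA}$ straddling two $\mathbf{F}$-discs can occur. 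On the right, $s\mathbf{F}\upperset{\scriptscriptstyle{G}}{\circ}sm_{\MA}$ is precisely the block ``$\mathbf{F}$ with an $m_{\MA}$ inserted'', and $sm_{\MB}\upperset{\scriptscriptstyle{s\mathbf{F}}}{\circ}(-)$ places $m_{\MB}$ on top of this block together with the remaining $\mathbf{F}$-discs. I would set up the evident bijection between these two families of diagrams and then compare the accumulated signs.

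The routine but genuinely delicate part, and the step I expect to be the main obstacle, is the Koszul-sign bookkeeping: on each side the coefficient $\mathcal{E}(\mathcalboondox{D})$ collects signs from several sources (moving $sm_{\MA}$, respectively the $\mathbf{F}$-discs, past the inputs lying to their left, together with the output shifts) and these are accumulated in a different order on the two sides. I would handle this by fixing, for a representative matched diagram, the positions of all inserted discs and comparing the two sign contributions slot by slot, the oddness of $sm_{\MA}$ accounting precisely for the pairwise cancellation needed in \eqref{eq:associativity-mA} and for the coincidence of signs in \eqref{eq:associativity-F-mB}.
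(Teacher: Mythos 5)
Your proposal follows essentially the same route as the paper's proof: for \eqref{eq:associativity-mA} both arguments match the linear (chain) diagrams on the two sides and cancel the branched diagrams in sign-opposite pairs coming from the two insertion orders, using $|sm_{\MA}|=1$, and for \eqref{eq:associativity-F-mB} both identify the two sides with one and the same family of diagrams ($m_{\MB}$ fed by $\mathbf{F}$-discs, with a single $m_{\MA}$ plugged into exactly one $\mathbf{F}$-disc). The only blemish is a harmless slip of wording: the branched diagrams arise in expanding $(sm_{\MA}\upperset{\scriptscriptstyle{G}}{\circ}sm_{\MA})\upperset{\scriptscriptstyle{G}}{\circ}sm_{\MA}$, i.e.\ the right-hand side of \eqref{eq:associativity-mA}, not the left-hand side as you state when describing the cancellation.
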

\begin{proof}
 For every $\bar{x}\in\bar{\MO}_{\MA}$, we have that 
 \[
 (sm_{\MA}\upperset{\scriptscriptstyle{G}}{\circ}(sm_{\MA}\upperset{\scriptscriptstyle{G}}{\circ}sm_{\MA}))^{\bar{x}}=\sum\mathcal{E}(\mathcalboondox{D})
 \] 
 where the sum is over all the filled diagrams $\mathcalboondox{D}$ of type $\bar{x}$ and of the form


    \end{minipage}
\noindent respectively.
    The minus sign comes from the fact that the order of the first outgoing arrows of the two sources filled with $m_{\MA}$ is reversed and $|sm_{\MA}|=1$. Thus, the identity \eqref{eq:associativity-mA} is satisfied since $\sum\mathcal{E}(\mathcalboondox{D_2})=\sum\mathcal{E}(\mathcalboondox{D_3})$ and $\sum\mathcal{E}(\mathcalboondox{D})=\sum\mathcal{E}(\mathcalboondox{D_1})$.

    We now show \eqref{eq:associativity-F-mB}. For $\bar{x}\in\bar{\MO}_{\MA}$ we have that $(sm_{\MB}\upperset{\scriptscriptstyle{s\mathbf{F}}}{\circ}(s\mathbf{F}\upperset{\scriptscriptstyle{G}}{\circ}sm_{\MA}))^{\bar{x}}=\sum\mathcal{E}(\mathcalboondox{D})$ where the sum \\ \vskip-3.8mm\noindent is over all the filled diagrams $\mathcalboondox{D}$ of type $\bar{x}$ and of the form 



\noindent It is clear that $((sm_{\MB}\upperset{\scriptscriptstyle{M}}{\circ}s\mathbf{F})\upperset{\scriptscriptstyle{G}}{\circ}sm_{\MA})^{\bar{x}}=\sum\mathcal{E}(\mathcalboondox{D})$ as well.
\end{proof}

\begin{corollary}
 Let $\MA$ and $\MB$ be two graded quivers.
Consider $sm_{\MA}\in C(\MA)[1]$ and $sm_{\MB}\in C(\MB)[1]$ of degree $1$ and $s\mathbf{F}\in C(\MA,\MB)[1]$ of degree $0$.
Suppose that $sm_{\MB}$ satisfies the Stasheff identities \eqref{eq:stasheff-identities} for every $\bar{x}\in\bar{\MO}_{\MA}$.
Then, the following identity holds.
\begin{equation}
    \label{eq:second-identity}
    s\mathbf{F}\upperset{\scriptscriptstyle{G}}{\circ}(sm_{\MA}\upperset{\scriptscriptstyle{G}}{\circ}sm_{\MA})=(s\mathbf{F}\upperset{\scriptscriptstyle{G}}{\circ}sm_{\MA}-sm_{\MB}\upperset{\scriptscriptstyle{M}}{\circ} s\mathbf{F})\upperset{\scriptscriptstyle{G}}{\circ} sm_{\MA}+sm_{\MB}\upperset{\scriptscriptstyle{s\mathbf{F}}}{\circ}(s\mathbf{F}\upperset{\scriptscriptstyle{G}}{\circ}sm_{\MA}-sm_{\MB}\upperset{\scriptscriptstyle{M}}{\circ} s\mathbf{F})
\end{equation}
\end{corollary}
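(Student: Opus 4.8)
The plan is to transform the left-hand side $s\mathbf{F}\upperset{\scriptscriptstyle{G}}{\circ}(sm_{\MA}\upperset{\scriptscriptstyle{G}}{\circ}sm_{\MA})$ into the right-hand side by a short chain of identities, using only the bilinearity of the three composition operations, the two associativity identities of Lemma \ref{lemma:identities}, and the hypothesis that $sm_{\MB}$ satisfies the Stasheff identities. The first ingredient I would establish is the associativity relation
\begin{equation}
(s\mathbf{F}\upperset{\scriptscriptstyle{G}}{\circ}sm_{\MA})\upperset{\scriptscriptstyle{G}}{\circ}sm_{\MA}=s\mathbf{F}\upperset{\scriptscriptstyle{G}}{\circ}(sm_{\MA}\upperset{\scriptscriptstyle{G}}{\circ}sm_{\MA}),
\end{equation}
which is proved exactly as \eqref{eq:associativity-mA}: expanding the left-hand side produces, besides the fully nested diagrams that make up the right-hand side, the diagrams in which two discs filled with $m_{\MA}$ are grafted into two disjoint blocks of inputs of the disc filled with $\mathbf{F}$, and these occur in pairs differing only by the order of the two $m_{\MA}$-discs, hence cancel since $|sm_{\MA}|=1$.

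Granting this, write $\delta=s\mathbf{F}\upperset{\scriptscriptstyle{G}}{\circ}sm_{\MA}-sm_{\MB}\upperset{\scriptscriptstyle{M}}{\circ}s\mathbf{F}$ for the common inner factor of the right-hand side, so that the right-hand side is $\delta\upperset{\scriptscriptstyle{G}}{\circ}sm_{\MA}+sm_{\MB}\upperset{\scriptscriptstyle{s\mathbf{F}}}{\circ}\delta$. I would then rewrite the left-hand side as $(s\mathbf{F}\upperset{\scriptscriptstyle{G}}{\circ}sm_{\MA})\upperset{\scriptscriptstyle{G}}{\circ}sm_{\MA}$ and substitute $s\mathbf{F}\upperset{\scriptscriptstyle{G}}{\circ}sm_{\MA}=\delta+sm_{\MB}\upperset{\scriptscriptstyle{M}}{\circ}s\mathbf{F}$; bilinearity of $\upperset{\scriptscriptstyle{G}}{\circ}$ in its first slot yields $\delta\upperset{\scriptscriptstyle{G}}{\circ}sm_{\MA}+(sm_{\MB}\upperset{\scriptscriptstyle{M}}{\circ}s\mathbf{F})\upperset{\scriptscriptstyle{G}}{\circ}sm_{\MA}$, and \eqref{eq:associativity-F-mB} turns the second summand into $sm_{\MB}\upperset{\scriptscriptstyle{s\mathbf{F}}}{\circ}(s\mathbf{F}\upperset{\scriptscriptstyle{G}}{\circ}sm_{\MA})$. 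Substituting $s\mathbf{F}\upperset{\scriptscriptstyle{G}}{\circ}sm_{\MA}=\delta+sm_{\MB}\upperset{\scriptscriptstyle{M}}{\circ}s\mathbf{F}$ once more and using linearity of $sm_{\MB}\upperset{\scriptscriptstyle{s\mathbf{F}}}{\circ}(-)$ in its second slot, this becomes $\delta\upperset{\scriptscriptstyle{G}}{\circ}sm_{\MA}+sm_{\MB}\upperset{\scriptscriptstyle{s\mathbf{F}}}{\circ}\delta+sm_{\MB}\upperset{\scriptscriptstyle{s\mathbf{F}}}{\circ}(sm_{\MB}\upperset{\scriptscriptstyle{M}}{\circ}s\mathbf{F})$. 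The first two terms are precisely the right-hand side, so it remains to kill the last one. For this I would prove the companion associativity identity $sm_{\MB}\upperset{\scriptscriptstyle{s\mathbf{F}}}{\circ}(sm_{\MB}\upperset{\scriptscriptstyle{M}}{\circ}s\mathbf{F})=(sm_{\MB}\upperset{\scriptscriptstyle{G}}{\circ}sm_{\MB})\upperset{\scriptscriptstyle{M}}{\circ}s\mathbf{F}$ by the same diagrammatic comparison as \eqref{eq:associativity-F-mB} — on both sides the diagrams consist of two nested discs filled with $m_{\MB}$ sitting above discs filled with $\mathbf{F}$ — and then invoke $sm_{\MB}\upperset{\scriptscriptstyle{G}}{\circ}sm_{\MB}=\tfrac12[sm_{\MB},sm_{\MB}]_G=0$, which holds because $sm_{\MB}$ satisfies \eqref{eq:stasheff-identities} and $\operatorname{char}\kk\neq 2$.

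The bookkeeping of the substitutions and of the bilinearity is routine; the delicate part, and the main obstacle, is checking the two auxiliary associativity identities together with their Koszul signs. Both are direct analogues of the identities already established in Lemma \ref{lemma:identities}: the first is \eqref{eq:associativity-mA} with the top disc filled with $\mathbf{F}$ instead of $m_{\MA}$, where the sign-induced cancellation of the disjoint-block diagrams is unaffected because it depends only on the degrees of the two inserted $m_{\MA}$-discs; the second is \eqref{eq:associativity-F-mB} with the inserted structure living at the level of $\MB$ rather than of $\MA$. I would therefore only indicate that the diagrammatic arguments of Lemma \ref{lemma:identities} apply verbatim, rather than reproduce the sign computations in full.
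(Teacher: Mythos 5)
Your proof is correct and takes essentially the same route as the paper: both arguments rest on exactly the same three ingredients, namely the analogue of \eqref{eq:associativity-mA} with $s\mathbf{F}$ in the outer disc, the identity \eqref{eq:associativity-F-mB} from Lemma \ref{lemma:identities}, and the vanishing of $sm_{\MB}\upperset{\scriptscriptstyle{s\mathbf{F}}}{\circ}(sm_{\MB}\upperset{\scriptscriptstyle{M}}{\circ}s\mathbf{F})=(sm_{\MB}\upperset{\scriptscriptstyle{G}}{\circ}sm_{\MB})\upperset{\scriptscriptstyle{M}}{\circ}s\mathbf{F}$ forced by the Stasheff identities for $sm_{\MB}$. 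The only differences are cosmetic: you transform the left-hand side into the right-hand side via the substitution $\delta$ while the paper expands the right-hand side by bilinearity and cancels, and your appeal to $\operatorname{char}\kk\neq 2$ is superfluous since \eqref{eq:stasheff-identities} already states $(sm_{\MB}\upperset{\scriptscriptstyle{G}}{\circ}sm_{\MB})^{\bar{x}}=0$ directly.
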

\begin{proof}
Rewrite the equation \eqref{eq:second-identity} as
    \begin{equation}
    s\mathbf{F}\upperset{G}{\circ}(sm_{\MA}\upperset{G}{\circ}sm_{\MA})=(s\mathbf{F}\upperset{G}{\circ}sm_{\MA})\upperset{G}{\circ} sm_{\MA}-(sm_{\MB}\upperset{M}{\circ} s\mathbf{F})\upperset{G}{\circ} sm_{\MA}+sm_{\MB}\upperset{s\mathbf{F}}{\circ}(s\mathbf{F}\upperset{G}{\circ}sm_{\MA})-sm_{\MB}\upperset{s\mathbf{F}}{\circ}(sm_{\MB}\upperset{M}{\circ} s\mathbf{F})
\end{equation}
By Lemma \ref{lemma:identities}, we already have that 
$(sm_{\MB}\upperset{\scriptscriptstyle{M}}{\circ} s\mathbf{F})\upperset{\scriptscriptstyle{G}}{\circ} sm_{\MA}=sm_{\MB}\upperset{\scriptscriptstyle{s\mathbf{F}}}{\circ}(s\mathbf{F}\upperset{\scriptscriptstyle{G}}{\circ}sm_{\MA})$.
Moreover, one can \\ \vskip-3.8mm\noindent prove in the same way as we did for \eqref{eq:associativity-mA} that $s\mathbf{F}\upperset{\scriptscriptstyle{G}}{\circ}(sm_{\MA}\upperset{\scriptscriptstyle{G}}{\circ}sm_{\MA})=(s\mathbf{F}\upperset{\scriptscriptstyle{G}}{\circ}sm_{\MA})\upperset{\scriptscriptstyle{G}}{\circ} sm_{\MA}$.
Finally, since \\ \vskip-3.8mm\noindent $sm_{\MB}$ satisfies the identities \eqref{eq:stasheff-identities} for every $\bar{x}\in\bar{\MO}_{\MB}$, $sm_{\MB}\upperset{\scriptscriptstyle{\mathbf{F}}}{\circ}(sm_{\MB}\upperset{\scriptscriptstyle{M}}{\circ} s\mathbf{F})=(sm_{\MB}\upperset{\scriptscriptstyle{G}}{\circ}sm_{\MB})\upperset{\scriptscriptstyle{\mathbf{F}}}{\circ} s\mathbf{F}=0$.
\end{proof}

We now recall the Homotopy Transfer Theorem written in terms of $A_{\infty}$-categories and give a proof which do not use the bar construction inspired by the one in \cite{petersen}.
\begin{theorem}
\label{thm2-Ainf}
Let $(\MA,d_{\MA})$, $(\MB,d_{\MB})$ be dg quivers and $(F_0,f):\MA\rightarrow \MB$ be a quasi-isomorphism of dg quivers.
Let $sm_{\MB}\in C(\MB)[1]$ be an element of degree $1$, with $sm_{\MB}^{x,y}=d_{\MB}^{x,y}[1]$ for every $x,y\in\MO_{\MB}$, that turns $\MB$ into an $A_{\infty}$-category.
Then, there exists an $A_{\infty}$-structure $sm_{\MA}$ on $\MA$ with $sm_{\MA}^{x,y}=d_{\MA}^{x,y}[1]$ as well as an $A_{\infty}$-morphism $(F_0,s\mathbf{F}) : (\MA,sm_{\MA})\rightarrow (\MB,sm_{\MB})$ such that $sF^{x,y}=f^{x,y}[1]$ for every $x,y\in\MO_{\MA}$. 
\end{theorem}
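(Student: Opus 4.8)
The plan is to build the components $sm_{\MA}^{\bar x}$ and $sF^{\bar x}$ simultaneously by induction on the length $n=\llg(\bar x)$, following the obstruction-theoretic scheme of \cite{petersen} but working directly with the Hom-quivers $C(\MA)$ and $C(\MA,\MB)$ rather than with bar constructions. The first point to record is that, because $\kk$ is a field, the functor $\Homgr_{\kk}(\MA[1]^{\otimes\bar x},-)$ is exact, so post-composition with $f$ induces a quasi-isomorphism $f_{*}:C^{\bar x}(\MA)\to C^{\bar x}(\MA,\MB)$ that commutes with the differentials $d_{C(\MA)}$ and $d_{C(\MA,\MB)}$; this is the only place where the hypothesis that $(F_0,f)$ is a quasi-isomorphism of dg quivers is used, and it brings us exactly to the setting of Theorem \ref{thm:HTT-petersen}. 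The base cases $n=1,2$ are forced by the statement: one sets $sm_{\MA}^{x,y}=d_{\MA}^{x,y}[1]$ and $sF^{x,y}=f^{x,y}[1]$, and $(\operatorname{SI}^{x,y})$, $(\operatorname{MI}^{x,y})$ hold precisely because $d_{\MA}$ is a differential and $f$ is a chain map.

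Assume now that $sm_{\MA}^{\bar y}$ and $sF^{\bar y}$ have been constructed for every $\bar y$ with $\llg(\bar y)<n$, satisfying \eqref{eq:stasheff-identities} and \eqref{eq:stasheff-morphisms} in those lengths. Isolating in $(\operatorname{SI}^{\bar x})$ and $(\operatorname{MI}^{\bar x})$ the terms that involve a length-$n$ unknown, one sees that the two identities at length $n$ take the linear form
\begin{equation}
d_{C(\MA)}\big(sm_{\MA}^{\bar x}\big)=R_m^{\bar x},\qquad f_{*}\big(sm_{\MA}^{\bar x}\big)-d_{C(\MA,\MB)}\big(sF^{\bar x}\big)=R_F^{\bar x},
\end{equation}
where $R_m^{\bar x}\in C^{\bar x}(\MA)$ and $R_F^{\bar x}\in C^{\bar x}(\MA,\MB)$ depend only on the already constructed data: $R_m^{\bar x}$ is the sum of the diagrams of type $\bar x$ in which both sources filled with $m_{\MA}$ carry strictly more than one input, and $R_F^{\bar x}$ is the analogous known remainder of the morphism identity. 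Thus the inductive step reduces to solving this coupled pair for $sm_{\MA}^{\bar x}$ and $sF^{\bar x}$.

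The heart of the argument is to verify the two compatibilities that make the pair solvable. Using the associativity identity \eqref{eq:associativity-mA} one shows that $R_m^{\bar x}$ is a $d_{C(\MA)}$-cycle; this is the graded Jacobi identity for $sm_{\MA}\upperset{G}{\circ}sm_{\MA}$ read off at length $n$. Using the Corollary, concretely identity \eqref{eq:second-identity} together with \eqref{eq:associativity-F-mB}, one shows the stronger relation $f_{*}(R_m^{\bar x})=d_{C(\MA,\MB)}(R_F^{\bar x})$, so that $f_{*}(R_m^{\bar x})$ is a boundary. Since $f_{*}$ is a quasi-isomorphism, injectivity on cohomology forces $[R_m^{\bar x}]=0$, and hence there exists $sm_{\MA}^{\bar x}$ with $d_{C(\MA)}(sm_{\MA}^{\bar x})=R_m^{\bar x}$. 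This solves the first equation.

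It remains to solve the second equation for $sF^{\bar x}$. The element $z:=f_{*}(sm_{\MA}^{\bar x})-R_F^{\bar x}$ is a $d_{C(\MA,\MB)}$-cycle, by the boundary relation just established. To realize $z$ as a boundary I would exploit the residual freedom in $sm_{\MA}^{\bar x}$: it may be altered by any $d_{C(\MA)}$-cycle $c$ without disturbing the first equation, and doing so changes $z$ by $f_{*}(c)$. Because $f_{*}$ is surjective on cohomology, one can choose a cycle $c$ with $f_{*}[c]=-[z]$; after this adjustment $z$ becomes a boundary, and any primitive of it is the sought $sF^{\bar x}$. This closes the induction and produces $sm_{\MA}$ and $(F_0,s\mathbf{F})$ with the prescribed arity-one components. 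The main obstacle is exactly this coupling: one must check, with the correct Koszul signs, that $R_m^{\bar x}$ is closed and that $f_{*}(R_m^{\bar x})$ is exact, and then invoke both injectivity (to solve for $m_{\MA}$) and surjectivity (to adjust $m_{\MA}$ and then solve for $\mathbf{F}$) of $f_{*}$ on cohomology, that is, the full strength of the quasi-isomorphism, in the correct order. The remaining diagrammatic and sign bookkeeping is routine.
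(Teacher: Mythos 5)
Your proposal is correct and takes essentially the same route as the paper's own proof: the same induction on the length of $\bar{x}$, with identity \eqref{eq:associativity-mA} showing that the Stasheff obstruction is a cycle, identity \eqref{eq:second-identity} showing that its image under $f_{*}$ is a boundary, and then injectivity of $H(f_{*})$ to solve for $sm_{\MA}^{\bar{x}}$ followed by surjectivity to adjust $sm_{\MA}^{\bar{x}}$ by a cycle and solve for $sF^{\bar{x}}$. Your formulation as a coupled linear system with remainders $R_m^{\bar{x}}$, $R_F^{\bar{x}}$ is just a rewriting of the paper's defect-correction scheme ($s\mu_{\MA}^{\bar{x}}=sm_{\MA}^{\bar{x}}-e$, $s\nu_{\MA}^{\bar{x}}=s\mu_{\MA}^{\bar{x}}-e'$, $sG^{\bar{x}}=sF^{\bar{x}}-e\dprime$), so no substantive difference.
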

\begin{proof}
    We construct the $A_{\infty}$-structure and the $A_{\infty}$-morphism inductively.
    We set $sm_{\MA}^{x,y}=d_{\MA}^{x,y}[1]$ and $sF^{x,y}=f^{x,y}[1]$. Then, we already have that the identities $(\operatorname{SI}^{x,y})$ and $(\operatorname{MI}^{x,y})$ are satisfied for every $x,y\in\MO_{\MA}$.
    Consider $\bar{x}\in\bar{\MO}_{\MA}$ with $\llg(\bar{x})>2$  and suppose that we have constructed $sm_{\MA}\in C(\MA)[1]$ of degree $1$ and $s\mathbf{F}\in C(\MA,\MB)[1]$ of degree $0$ such that the identities $(\operatorname{SI}^{\bar{y}})$ and $(\operatorname{MI}^{\bar{y}})$ are satisfied for every $\bar{y}\in\bar{\MO}_{\MA}$ such that $\llg(\bar{y})<\llg(\bar{x})$.
    
    Since $(sm_{\MA}\upperset{\scriptscriptstyle{G}}{\circ} sm_{\MA})^{\bar{y}}$ vanishes for every $\bar{y}\in\bar{\MO}_{\MA}$ such that $\llg(\bar{y})<\llg(\bar{x})$ by assumption, the term \\ \vskip-3.8mm \noindent $\bar{x}$ of the identity \eqref{eq:associativity-mA} is simply 
    \[d_{\MA[1]}\circ (sm_{\MA}\upperset{\scriptscriptstyle{G}}{\circ} sm_{\MA})^{\bar{x}}=(sm_{\MA}\upperset{\scriptscriptstyle{G}}{\circ} sm_{\MA})^{\bar{x}}\circ d_{\MA[1]^{\otimes \bar{x}}}\] 
    which gives $d_{C(\MA)[1]}(sm_{\MA}\upperset{\scriptscriptstyle{G}}{\circ} sm_{\MA})^{\bar{x}}=0$ \textit{i.e.} $(sm_{\MA}\upperset{\scriptscriptstyle{G}}{\circ} sm_{\MA})^{\bar{x}}$ is a cycle.

    Moreover, since $(s\mathbf{F}\upperset{\scriptscriptstyle{G}}{\circ}sm_{\MA}-sm_{\MB}\upperset{\scriptscriptstyle{M}}{\circ} s\mathbf{F})^{\bar{y}}$ vanishes for $\bar{y}\in\bar{\MO}_{\MA}$ such that $\llg(\bar{y})<\llg(\bar{x})$, the term \\ \vskip-3.8mm \noindent $\bar{x}$ of the identity \eqref{eq:second-identity} is 
   \begin{equation}
    sF^{\llt(\bar{x}),\rrt(\bar{x})}\upperset{}{\circ}(sm_{\MA}\upperset{\scriptscriptstyle{G}}{\circ}sm_{\MA})^{\bar{x}}=(s\mathbf{F}\upperset{\scriptscriptstyle{G}}{\circ}sm_{\MA}-sm_{\MB}\upperset{\scriptscriptstyle{M}}{\circ} s\mathbf{F})^{\bar{x}}\upperset{\scriptscriptstyle{M}}{\circ} d_{\MA[1]^{\otimes \bar{x}}}+d_{\MB[1]}\underset{}{\circ}(s\mathbf{F}\upperset{\scriptscriptstyle{G}}{\circ}sm_{\MA}-sm_{\MB}\upperset{\scriptscriptstyle{M}}{\circ} s\mathbf{F})^{\bar{x}}
\end{equation}
so $sF^{\llt(\bar{x}),\rrt(\bar{x})}\upperset{}{\circ}(sm_{\MA}\upperset{\scriptscriptstyle{G}}{\circ}sm_{\MA})^{\bar{x}}$ is a boundary. Since $(sm_{\MA}\upperset{\scriptscriptstyle{G}}{\circ}sm_{\MA})^{\bar{x}}$ is a cycle and $(F_0,s\mathbf{F})$ is a \\ \vskip-3.8mm \noindent quasi-isomorphism, $(sm_{\MA}\upperset{\scriptscriptstyle{G}}{\circ}sm_{\MA})^{\bar{x}}$ is itself a boundary, meaning that there exists $e\in C^{\bar{x}}(\MA)[1]$ such \\ \vskip-3.8mm \noindent that $(sm_{\MA}\upperset{\scriptscriptstyle{G}}{\circ}sm_{\MA})^{\bar{x}}=\partial_{\MA} e$ where $\partial_{\MA}$ denotes the differential of $C(\MA)[1]$. 
We thus define the element \\ \vskip-3.8mm \noindent $\mu_{\MA}\in C(\MA)$ by $s\mu_A^{\bar{y}}=sm_{\MA}^{\bar{y}}$ for every $\bar{y}\in\bar{\MO}_{\MA}$ such that $\llg(\bar{y})<\llg(\bar{x})$ and $s\mu_{\MA}^{\bar{x}}=sm_{\MA}^{\bar{x}}-e$. 
It satisfies that $(s\mu_{\MA}\upperset{\scriptscriptstyle{G}}{\circ}s\mu_{\MA})^{\bar{y}}=0$ for every $\bar{y}\in\bar{\MO}_{\MA}$ with $\llg(\bar{y})\leq \llg(\bar{x})$.

Moreover, by \eqref{eq:second-identity} we have that 
$(s\mathbf{F}\upperset{\scriptscriptstyle{G}}{\circ}sm_{\MA} -sm_{\MB}\upperset{\scriptscriptstyle{M}}{\circ}s\mathbf{F})^{\bar{x}}$ is a cycle so it can be written as 
\[
(s\mathbf{F}\upperset{\scriptscriptstyle{G}}{\circ}sm_{\MA} -sm_{\MB}\upperset{\scriptscriptstyle{M}}{\circ}s\mathbf{F})^{\bar{x}}=sF^{\llt(\bar{x}),\rrt(\bar{x})}\circ e'+ \partial e\dprime
\]
where $e'$ is a cycle of $C^{\bar{x}}(\MA)[1]$, $e\dprime\in C^{\bar{x}}(\MA,\MB)[1]$ and where $\partial$ denotes the differential of $C(\MA,\MB)[1]$.

We thus define $s\nu_{\MA}^{\bar{y}}=s\mu_{\MA}^{\bar{y}}$, $sG^{\bar{y}}=sF^{\bar{y}}$ for every $\bar{y}\in\bar{\MO}_{\MA}$ such that $2<\llg(\bar{y})< \llg(\bar{x})$ and $s\nu_{\MA}^{\bar{x}}=s\mu_{\MA}^{\bar{x}}-e'$, $sG^{\bar{x}}=sF^{\bar{x}}-e\dprime$.
Therefore, since we have only modified $s\mu_{\MA}$ by adding a cycle, we have $(s\nu_{\MA}\upperset{\scriptscriptstyle{G}}{\circ}s\nu_{\MA})^{\bar{y}}=0$ for every $\bar{y}\in\bar{\MO}_{\MA}$ with $\llg(\bar{y})\leq \llg(\bar{x})$.
Moreover, it is straightforward to\\ \vskip-3.8mm \noindent check that $(G_0,s\mathbf{G})$ is an $A_{\infty}$-morphism.
\end{proof}

As proved in \cite{petersen} we also have the statement in the other direction, namely, we have the following theorem whose proof is similar to the one of Theorem \ref{thm2-Ainf}: 
\begin{theorem}
\label{thm3-Ainf}
    Let $(\MA,d_{\MA})$, $(\MB,d_{\MB})$ be dg quivers and $(F_0,f):\MA\rightarrow \MB$ be a quasi-isomorphism of dg quivers.
Let $sm_{\MA}\in C(\MA)[1]$ be an element of degree $1$, with $sm_{\MA}^{x,y}=d_{\MA}^{x,y}[1]$ for every $x,y\in\MO_{\MA}$, that turns $\MA$ into an $A_{\infty}$-category.
Then, there exists an $A_{\infty}$-structure $sm_{\MB}\in C(\MB)[1]$ on $\MB$ such that $sm_{\MB}^{x,y}=d_{\MB}^{x,y}[1]$ for every $x,y\in\MO_{\MB}$ as well as an $A_{\infty}$-morphism $(F_0,s\mathbf{F})$ with $sF\in C(\MA,\MB)[1]$ such that $sF^{x,y}=f^{x,y}[1]$ for every $x,y\in\MO_{\MA}$. 
\end{theorem}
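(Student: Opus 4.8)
The plan is to run the same induction on $n=\llg(\bar{x})$ as in the proof of Theorem \ref{thm2-Ainf}, but with the roles of $\MA$ and $\MB$ exchanged: now $sm_{\MA}$ is given and satisfies the Stasheff identities, and I construct $sm_{\MB}$ and $s\mathbf{F}$. For $\llg(\bar{x})\leq 2$ I set $sm_{\MB}^{x,y}=d_{\MB}^{x,y}[1]$ and $sF^{x,y}=f^{x,y}[1]$, so that $(\operatorname{SI}^{x,y})$ holds because $d_{\MB}^{2}=0$ and $(\operatorname{MI}^{x,y})$ holds because $f$ is a chain map. Then I assume $sm_{\MB}^{\bar{y}}$ and $sF^{\bar{y}}$ have been built for every $\bar{y}$ with $\llg(\bar{y})<n$ in such a way that $(sm_{\MB}\upperset{\scriptscriptstyle{G}}{\circ}sm_{\MB})^{\bar{y}}=0$ and, writing $\Phi=s\mathbf{F}\upperset{\scriptscriptstyle{G}}{\circ}sm_{\MA}-sm_{\MB}\upperset{\scriptscriptstyle{M}}{\circ}s\mathbf{F}$, that $\Phi^{\bar{y}}=0$.

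The main new ingredient is the companion of \eqref{eq:second-identity} obtained by exchanging the two structures. Since here it is $sm_{\MA}$ that satisfies the Stasheff identities, Lemma \ref{lemma:identities} (together with $(s\mathbf{F}\upperset{\scriptscriptstyle{G}}{\circ}sm_{\MA})\upperset{\scriptscriptstyle{G}}{\circ}sm_{\MA}=s\mathbf{F}\upperset{\scriptscriptstyle{G}}{\circ}(sm_{\MA}\upperset{\scriptscriptstyle{G}}{\circ}sm_{\MA})$ and $sm_{\MA}\upperset{\scriptscriptstyle{G}}{\circ}sm_{\MA}=0$) gives, with $S=sm_{\MB}\upperset{\scriptscriptstyle{G}}{\circ}sm_{\MB}$,
\[
\Phi\upperset{\scriptscriptstyle{G}}{\circ}sm_{\MA}+sm_{\MB}\upperset{\scriptscriptstyle{s\mathbf{F}}}{\circ}\Phi=-S\upperset{\scriptscriptstyle{s\mathbf{F}}}{\circ}s\mathbf{F}.
\]
Writing $\bar{x}'=(F_0(x_1),\dots,F_0(x_n))$ and evaluating this identity at $\bar{x}$, the left-hand side collapses (because $\Phi$ vanishes in arities $<n$) to the differential $\partial\Phi^{\bar{x}}$ in $C^{\bar{x}}(\MA,\MB)[1]$, while the right-hand side collapses (because $S$ vanishes in arities $<n$) to $-S^{\bar{x}'}\circ f[1]^{\otimes n}$, where $f[1]^{\otimes n}\colon\MA[1]^{\otimes\bar{x}}\to\MB[1]^{\otimes\bar{x}'}$. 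Recording the dependence on the unknowns as $S^{\bar{x}'}=\partial_{\MB}sm_{\MB}^{\bar{x}'}+S_0^{\bar{x}'}$ and $\Phi^{\bar{x}}=-\partial sF^{\bar{x}}-sm_{\MB}^{\bar{x}'}\circ f[1]^{\otimes n}+M_0^{\bar{x}}$, and using that $f$ is a chain map to cancel the terms involving $sm_{\MB}^{\bar{x}'}$, I am left with a relation among the already-constructed data, namely $S_0^{\bar{x}'}\circ f[1]^{\otimes n}=-\partial M_0^{\bar{x}}$.

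Now the quasi-isomorphism enters, as in Theorem \ref{thm2-Ainf} but through precomposition. Applying \eqref{eq:associativity-mA} to $sm_{\MB}$ and the induction hypothesis shows that $S_0^{\bar{x}'}$ is a cycle of $C^{\bar{x}'}(\MB)[1]$, and the relation above exhibits its image $(f[1]^{\otimes n})^{*}S_0^{\bar{x}'}$ as a boundary. Since $\kk$ is a field, $f[1]^{\otimes n}$ is a quasi-isomorphism, hence so is the induced chain map $(f[1]^{\otimes n})^{*}\colon C^{\bar{x}'}(\MB)[1]\to C^{\bar{x}}(\MA,\MB)[1]$; being injective on cohomology it forces $S_0^{\bar{x}'}$ itself to be a boundary, say $S_0^{\bar{x}'}=\partial_{\MB}e$. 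Setting $sm_{\MB}^{\bar{x}'}=-e$ then yields $S^{\bar{x}}=0$, i.e.\ $(\operatorname{SI}^{\bar{x}})$, and leaves $sm_{\MB}^{\bar{x}'}$ determined up to a cycle. With this choice the displayed identity becomes $\partial\Phi^{\bar{x}}=-S^{\bar{x}'}\circ f[1]^{\otimes n}=0$, so $\Phi^{\bar{x}}$ is a cycle of $C^{\bar{x}}(\MA,\MB)[1]$. Using that $(f[1]^{\otimes n})^{*}$ is an isomorphism on cohomology, I choose a cycle $z\in C^{\bar{x}'}(\MB)[1]$ with $[z\circ f[1]^{\otimes n}]=[\Phi^{\bar{x}}]$; replacing $sm_{\MB}^{\bar{x}'}$ by $sm_{\MB}^{\bar{x}'}+z$ preserves $(\operatorname{SI}^{\bar{x}})$ and turns $\Phi^{\bar{x}}$ into a boundary, which is then absorbed into $\partial sF^{\bar{x}}$ so that $(\operatorname{MI}^{\bar{x}})$ holds. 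This completes the inductive step, and hence the construction of $sm_{\MB}$ and $(F_0,s\mathbf{F})$.

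I expect the middle step to be the main obstacle. In Theorem \ref{thm2-Ainf} the target already carried the $A_{\infty}$-structure, so the Stasheff obstruction to be killed lived on the side reached by the quasi-isomorphism and its exactness was automatic after applying $sF$; here the obstruction $S_0^{\bar{x}'}$ lives on the side $\MB$ whose structure is being built, and there is no a priori reason for it to be exact. Its exactness must be imported from the vanishing of $sm_{\MA}\upperset{\scriptscriptstyle{G}}{\circ}sm_{\MA}$ on the $\MA$-side through the companion identity and then descended along $f$, which is precisely where it is essential that we work over a field, so that $f[1]^{\otimes n}$ is a quasi-isomorphism and precomposition induces an isomorphism on the cohomology of the relevant $\Homgr$-complexes.
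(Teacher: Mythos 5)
Your proposal is correct and takes essentially the same approach as the paper: the paper only remarks that Theorem \ref{thm3-Ainf} has a proof ``similar to the one of Theorem \ref{thm2-Ainf}'', but the detailed argument it does spell out for the pre-Calabi-Yau analogue (Theorem \ref{thm:3-pcy}, built on the companion identity \eqref{eq:thm3-pCY}) is exactly your argument specialized to the $A_{\infty}$ case. In particular, your inductive scheme, your exchanged-roles identity $\Phi\upperset{\scriptscriptstyle{G}}{\circ}sm_{\MA}+sm_{\MB}\upperset{\scriptscriptstyle{s\mathbf{F}}}{\circ}\Phi=-S\upperset{\scriptscriptstyle{s\mathbf{F}}}{\circ}s\mathbf{F}$, and your use of precomposition with $f[1]^{\otimes n}$ (a quasi-isomorphism since $\kk$ is a field) to descend exactness of the obstructions $S_0^{\bar{x}'}$ and $\Phi^{\bar{x}}$ to the $\MB$-side all match the paper's treatment step for step.
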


We now recall the definition of a minimal model of an $A_{\infty}$-category.

\begin{definition}
    An $A_{\infty}$-category $(\MB,sm_{\MB})$ is 
    \textbf{\textcolor{ultramarine}{minimal}} if $sm_{\MB}^{x,y} = 0$ for each $x,y\in\MO_{\MB}$. 
    Given an $A_{\infty}$-category $(\MA,sm_{\MA})$, a \textbf{\textcolor{ultramarine}{minimal model}} of it is a minimal $A_{\infty}$-category $(\MB,sm_{\MB})$ together with a quasi-isomorphism $(P_0,s\mathbf{P}) : \MA \rightarrow \MB$ of $A_{\infty}$-categories.
\end{definition}

We remark the well-known fact that minimal models of $A_{\infty}$-categories always exist. 
Indeed, given an $A_{\infty}$-category $(\MA,sm_{\MA})$,  
let $(Id,p) : \MA\rightarrow H(\MA)$ be any quasi-isomorphism of dg quivers. 
By Theorem \ref{thm3-Ainf}, there exists an $A_{\infty}$-structure on $H(\MA)$ and a morphism of $A_{\infty}$-categories $(Id,s\mathbf{P}) : \MA \rightarrow H(\MA)$ such that $sP^{x,y} = p^{x,y}[1]$ for each $x,y\in\MO_{\MA}$. Moreover, we recall the following result.

\begin{lemma}
     Consider an $A_{\infty}$-category $(\MA,sm_{\MA})$, $H(\MA)$ its cohomology and two quasi-isomorphisms of dg quivers $(Id,p) : \MA\rightarrow H(\MA)$ and $(Id,i) : H(\MA)\rightarrow \MA$. Then, the $A_{\infty}$-structures $sm_{i}$ and $sm_{p}$ on $H(\MA)$ given in theorems \ref{thm2-Ainf} and \ref{thm3-Ainf} are isomorphic.
     
     In particular, if $(Id,p),(Id,p') : \MA\rightarrow H(\MA)$ (resp. $(Id,i), (Id,i') : H(\MA)\rightarrow \MA$) are quasi-isomorphisms, the $A_{\infty}$-structures $sm_{p}$ and $sm_{p'}$ (resp. $sm_{i}$, $sm_{i'}$) on $H(\MA)$ given in Theorem \ref{thm3-Ainf} (resp. in Theorem \ref{thm2-Ainf}) are isomorphic.
\end{lemma}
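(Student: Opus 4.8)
The plan is to realize the comparison between $sm_i$ and $sm_p$ by composing the two transfer $A_{\infty}$-morphisms supplied by Theorems \ref{thm2-Ainf} and \ref{thm3-Ainf}, thereby obtaining an $A_{\infty}$-morphism directly between the two transferred structures on $H(\MA)$, and then to apply Lemma \ref{lemma:iso-A-inf} to conclude that it is an isomorphism.

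More precisely, applying Theorem \ref{thm2-Ainf} to the quasi-isomorphism $(Id,i) : H(\MA)\to \MA$ and the structure $sm_{\MA}$ produces $sm_i$ together with an $A_{\infty}$-morphism $(Id,s\mathbf{I}) : (H(\MA),sm_i)\to (\MA,sm_{\MA})$ satisfying $sI^{x,y}=i^{x,y}[1]$, while applying Theorem \ref{thm3-Ainf} to $(Id,p) : \MA\to H(\MA)$ produces $sm_p$ together with an $A_{\infty}$-morphism $(Id,s\mathbf{P}) : (\MA,sm_{\MA})\to (H(\MA),sm_p)$ satisfying $sP^{x,y}=p^{x,y}[1]$. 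I would then form the composite $A_{\infty}$-morphism $(Id,s\mathbf{P}\circ s\mathbf{I}) : (H(\MA),sm_i)\to (H(\MA),sm_p)$, whose object map is the identity, hence a bijection.

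By the composition formula \eqref{eq:compo-Ainf} restricted to length-two tuples, the linear component of this composite equals $sP^{x,y}\circ sI^{x,y}=(p^{x,y}\circ i^{x,y})[1]$. The crucial point is that $H(\MA)$ carries the zero differential, so its cohomology is $H(\MA)$ itself; since $p$ and $i$ are quasi-isomorphisms, $p\circ i$ induces an isomorphism on cohomology and is therefore an isomorphism of the graded quiver $H(\MA)$. Hence the first component of $(Id,s\mathbf{P}\circ s\mathbf{I})$ is an isomorphism of dg vector spaces for every pair of objects $x,y$, and Lemma \ref{lemma:iso-A-inf} gives that the composite is an isomorphism of $A_{\infty}$-categories; thus $sm_i$ and $sm_p$ are isomorphic.

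The two ``in particular'' statements then follow by transitivity. Given projections $(Id,p),(Id,p') : \MA\to H(\MA)$, I would fix any quasi-isomorphism $(Id,i) : H(\MA)\to \MA$ --- which exists because over a field every dg quiver splits off its cohomology --- and apply the first part twice to get $sm_p\cong sm_i\cong sm_{p'}$; the case of two inclusions $(Id,i),(Id,i') : H(\MA)\to \MA$ is handled symmetrically by fixing an auxiliary projection. The only delicate step is the identification of the linear part of the composite together with the verification that $p\circ i$ is invertible on $H(\MA)$; once this is in place, Lemma \ref{lemma:iso-A-inf} upgrades invertibility of the linear term to invertibility of the entire morphism, so no manipulation of the higher components is required.
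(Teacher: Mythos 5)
Your proposal is correct and follows essentially the same route the paper takes: the paper states this $A_\infty$ lemma without proof, but its pre-Calabi-Yau analogue (Lemma \ref{lemma:pCY-structure-cohomology}) is proved exactly as you do, by forming the composite $s\mathbf{P}\circ s\mathbf{I} : (H(\MA),sm_i)\rightarrow (H(\MA),sm_p)$, observing that its linear component is an isomorphism because $H(\MA)$ carries the zero differential, and invoking Lemma \ref{lemma:iso-A-inf} (resp. Lemma \ref{lemma:iso-pCY}). Your transitivity argument for the ``in particular'' statements, using an auxiliary section $i$ (which exists since we work over a field), is also the intended reduction.
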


\begin{definition}
\label{def:morphism-homology}
    Let $(\MA,sm_{\MA})$ and $(\MB,sm_{\MB})$ be $A_{\infty}$-categories. 
    We consider quasi-isomorphisms of dg quivers $(Id,i_{\MA}) : H(\MA)\rightarrow \MA$ and $(Id,p_{\MB}) : \MB\rightarrow H(\MB)$. Then, by Theorem \ref{thm2-Ainf} $H(\MA)$ is endowed with an $A_{\infty}$-structure $sm_{i_{\MA}}$ and $(Id,i_{\MA})$ extends to an $A_{\infty}$-morphism $(Id,s\mathbf{I}_{\MA}) : H(\MA)\rightarrow \MA$. Moreover, by Theorem \ref{thm3-Ainf} $H(\MB)$ is endowed with an $A_{\infty}$-structure $sm_{p_{\MB}}$ and $(Id,p_{\MB})$ extends to an $A_{\infty}$-morphism $(Id,s\mathbf{P}_{\MB}) : \MB\rightarrow H(\MB)$.
    Given an $A_{\infty}$-morphism $(F_0,s\mathbf{F}):\MA\rightarrow \MB$, we define an $A_{\infty}$-morphism $(F_0,H(s\mathbf{F})) : H(\MA)\rightarrow H(\MB)$ where $H(s\mathbf{F})=s\mathbf{P}_{\MB} \circ s\mathbf{F}\circ s\mathbf{I}_{\MA}$. 
\end{definition}
Using the previous results, we deduce the following well-known theorem appearing for example in \cite{lefevre}.
\begin{theorem}
    Let $(\MA,sm_{\MA})$, $(\MB,sm_{\MB})$ be two $A_{\infty}$-categories and $(F_0,s\mathbf{F}) : (\MA,sm_{\MA})\rightarrow (\MB,sm_{\MB})$ be an $A_{\infty}$-morphism. Consider quasi-iso\-mor\-phisms $(Id,i_{\MA}) : H(\MA)\rightarrow \MA$ and $(Id,p_{\MB}) : \MB\rightarrow H(\MB)$ and the associated $A_{\infty}$-morphism $(F_0,H(s\mathbf{F})) : (H(\MA),sm_{i_{\MA}})\rightarrow (H(\MB),sm_{p_{\MB}})$ given in Definition \ref{def:morphism-homology}.
    
    Then, if $(F_0,s\mathbf{F})$ is a quasi-iso\-mor\-phism, there exists $(G_0,s\mathbf{G}):(\MB,sm_{\MB})\rightarrow (\MA,sm_{\MA})$ a quasi-iso\-mor\-phism of $A_{\infty}$-categories and quasi-isomorphisms $(Id,i_{\MB}) : H(\MB)\rightarrow \MB$, $(Id,p_{\MA}) : \MA\rightarrow H(\MA)$ such that $p_{\MA}\circ i_{\MA}=\id_{H(\MA)}$, $p_{\MB}\circ i_{\MB}=\id_{H(\MB)}$ and whose associated $(G_0,H(s\mathbf{G}))$ in the sense of Definition \ref{def:morphism-homology} is the inverse of $(F_0,H(s\mathbf{F}))$.
\end{theorem}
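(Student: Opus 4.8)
The plan is to invert $(F_0,H(s\mathbf F))$ at the level of minimal models with Lemma \ref{lemma:iso-A-inf}, and then to pull this inverse back to a quasi-isomorphism $\MB\to\MA$ through the transfer morphisms, after arranging the homotopy-transfer data so that the two round trips through cohomology are strictly the identity. First I would show that $(F_0,H(s\mathbf F))$ is an isomorphism of $A_\infty$-categories. By Definition \ref{def:morphism-homology} its arity $1$ component equals the map induced by $sF^{x,y}$ on cohomology, which is an isomorphism of graded vector spaces for all $x,y\in\MO_{\MA}$ because $(F_0,s\mathbf F)$ is a quasi-isomorphism; since $(H(\MA),sm_{i_{\MA}})$ and $(H(\MB),sm_{p_{\MB}})$ are minimal their differentials vanish, so this component is an isomorphism of dg vector spaces. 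Lemma \ref{lemma:iso-A-inf} then yields an inverse $(G_0,s\mathbf K)$ with $G_0=F_0^{-1}$ and $s\mathbf K\colon(H(\MB),sm_{p_{\MB}})\to(H(\MA),sm_{i_{\MA}})$.

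Next I would fix the remaining transfer data. Over a field each hom-complex splits, so I may choose a retraction $(Id,p_{\MA})\colon\MA\to H(\MA)$ of $i_{\MA}$ with $p_{\MA}\circ i_{\MA}=\id_{H(\MA)}$ and a section $(Id,i_{\MB})\colon H(\MB)\to\MB$ of $p_{\MB}$ with $p_{\MB}\circ i_{\MB}=\id_{H(\MB)}$, which are the quasi-isomorphisms demanded by the theorem. I then need $A_\infty$-extensions realizing these as one-sided inverses of the transfer morphisms already fixed for $F$: an $A_\infty$-morphism $s\mathbf P_{\MA}\colon(\MA,sm_{\MA})\to(H(\MA),sm_{i_{\MA}})$ extending $p_{\MA}$ with $s\mathbf P_{\MA}\circ s\mathbf I_{\MA}=\id_{H(\MA)}$, and dually $s\mathbf I_{\MB}\colon(H(\MB),sm_{p_{\MB}})\to(\MB,sm_{\MB})$ extending $i_{\MB}$ with $s\mathbf P_{\MB}\circ s\mathbf I_{\MB}=\id_{H(\MB)}$. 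These I would build by the inductive diagrammatic scheme of Lemma \ref{lemma:iso-A-inf}: in $(s\mathbf P_{\MA}\circ s\mathbf I_{\MA})^{\bar x}$ the unknown top term is $sP_{\MA}^{\bar x}\circ(i_{\MA}[1]^{\otimes})$, which can be solved because $i_{\MA}[1]^{\otimes}$ is split injective with retraction $p_{\MA}[1]^{\otimes}$; symmetrically the unknown $sI_{\MB}^{\bar x}$ enters $(s\mathbf P_{\MB}\circ s\mathbf I_{\MB})^{\bar x}$ through $p_{\MB}[1]\circ sI_{\MB}^{\bar x}$ and is solved using the section $i_{\MB}[1]$. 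One then checks that these solutions satisfy the identities $(\operatorname{MI}^{\bar x})$, so that taking $sm_{p_{\MA}}=sm_{i_{\MA}}$ and $sm_{i_{\MB}}=sm_{p_{\MB}}$ are admissible choices in Definition \ref{def:morphism-homology}; the lemma preceding that definition guarantees this is consistent, up to isomorphism, with the structures produced by the transfer theorems.

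Finally I would set $s\mathbf G=s\mathbf I_{\MA}\circ s\mathbf K\circ s\mathbf P_{\MB}\colon\MB\to\MA$. Its arity $1$ component is, up to shift, a composite of the quasi-isomorphisms $p_{\MB}$, the arity $1$ part of $s\mathbf K$, and $i_{\MA}$, and $G_0=F_0^{-1}$ is a bijection, so $(G_0,s\mathbf G)$ is a quasi-isomorphism of $A_\infty$-categories. Using the data above,
\[
H(s\mathbf G)=s\mathbf P_{\MA}\circ s\mathbf G\circ s\mathbf I_{\MB}=(s\mathbf P_{\MA}\circ s\mathbf I_{\MA})\circ s\mathbf K\circ(s\mathbf P_{\MB}\circ s\mathbf I_{\MB})=s\mathbf K,
\]
which is precisely the inverse of $(F_0,H(s\mathbf F))$, as required. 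I expect the main obstacle to be the middle paragraph: producing the one-sided $A_\infty$-inverses $s\mathbf P_{\MA}$ and $s\mathbf I_{\MB}$, verifying that they are genuine $A_\infty$-morphisms, and arranging that their transferred structures coincide with $sm_{i_{\MA}}$ and $sm_{p_{\MB}}$, so that the round trips $s\mathbf P_{\MA}\circ s\mathbf I_{\MA}$ and $s\mathbf P_{\MB}\circ s\mathbf I_{\MB}$ are strictly the identity rather than merely $A_\infty$-isomorphisms.
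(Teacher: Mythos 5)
Your overall skeleton is the one the paper uses (in its proof of the pre-Calabi-Yau analogue, Theorem \ref{thm:q-iso-pCY}): invert $(F_0,H(s\mathbf F))$ with Lemma \ref{lemma:iso-A-inf}, since its arity $1$ component is an isomorphism between minimal structures; arrange strict round-trip identities $s\mathbf P_{\MA}\circ s\mathbf I_{\MA}=\id$ and $s\mathbf P_{\MB}\circ s\mathbf I_{\MB}=\id$; set $s\mathbf G=s\mathbf I_{\MA}\circ s\mathbf K\circ s\mathbf P_{\MB}$; and compute $H(s\mathbf G)=s\mathbf K$. The first and last steps are correct as you state them. The gap is exactly where you flag it: your construction of the one-sided inverses $s\mathbf P_{\MA}$ and $s\mathbf I_{\MB}$. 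Solving $(s\mathbf P_{\MA}\circ s\mathbf I_{\MA})^{\bar x}=0$ arity by arity only determines $sP_{\MA}^{\bar x}$ on the image of $i_{\MA}[1]^{\otimes\bar x}$, while the identities $(\operatorname{MI}^{\bar x})$ impose an independent constraint on the same unknown (through its precomposition with $d_{\MA[1]^{\otimes\bar x}}$ and the higher components of $sm_{\MA}$); nothing in your scheme guarantees these two systems admit a simultaneous solution, and "one then checks" is precisely the nontrivial point. Moreover, the verification mechanism of Lemma \ref{lemma:iso-A-inf} (a composition-inverse automatically satisfies the morphism identities) is unavailable here: that argument relies on the two-sided identity $s\mathbf F\circ s\mathbf G=\id$, whereas $s\mathbf I_{\MA}$ is only a quasi-isomorphism, $i_{\MA}$ being injective but in general not surjective.

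The paper closes this gap without any new induction. Extend $p_{\MA}$ by Theorem \ref{thm3-Ainf} to an honest $A_\infty$-morphism $s\mathbf P'_{\MA}:(\MA,sm_{\MA})\to(H(\MA),sm_{p_{\MA}})$. Then $s\mathbf P'_{\MA}\circ s\mathbf I_{\MA}:(H(\MA),sm_{i_{\MA}})\to(H(\MA),sm_{p_{\MA}})$ is an $A_\infty$-morphism whose arity $1$ part is $p_{\MA}\circ i_{\MA}=\id$, hence an isomorphism by Lemma \ref{lemma:iso-A-inf}; replacing $s\mathbf P'_{\MA}$ by $(s\mathbf P'_{\MA}\circ s\mathbf I_{\MA})^{-1}\circ s\mathbf P'_{\MA}$ yields a morphism into $(H(\MA),sm_{i_{\MA}})$ satisfying $s\mathbf P_{\MA}\circ s\mathbf I_{\MA}=\id$ strictly, and this correction does not alter the arity $1$ component. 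The same trick applied on the $\MB$ side produces $s\mathbf I_{\MB}$ with $s\mathbf P_{\MB}\circ s\mathbf I_{\MB}=\id$. This also disposes of the side issue you raise about why the target structure of $s\mathbf P_{\MA}$ may be taken to be $sm_{i_{\MA}}$ rather than $sm_{p_{\MA}}$: the correcting isomorphism transports one to the other. With this substitution for your middle paragraph, the rest of your argument coincides with the paper's proof.
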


\section{Transferring pre-Calabi-Yau structures}
\label{pCY-case}
\subsection{Pre-Calabi-Yau categories}
In this subsection, we recall the notions of pre-Calabi-Yau structures and morphisms introduced in \cite{ktv} and show that a $d$-pre-Calabi-Yau morphism whose first component is an isomorphism is invertible in the category of $d$-pre-Calabi-Yau categories.
\begin{definition}
Given graded quivers $\MA$ and $\MB$ with respective sets of objects $\MO_{\MA}$ and $\MO_{\MB}$ and a map $F_0 : \MO_{\MA}\rightarrow \MO_{\MB}$, we define the graded vector space 
\begin{small}
    \begin{equation}
    \begin{split}
\Multi^{\bullet}_d(\MA,\MB)=\prod_{n\in\NN^*}\prod_{\doubar{x}\in\bar{\MO}_{\MA}^n}\Homgr_{\kk}\bigg(\bigotimes\limits_{i=1}^n\MA[1]^{\otimes \bar{x}^i},\bigotimes\limits_{i=1}^{n-1}{}_{F_0(\llt(\bar{x}^i))}\MB_{F_0(\rrt(\bar{x}^{i+1}))}[-d]\otimes{}_{F_0(\llt(\bar{x}^n))}\MB_{F_0(\rrt(\bar{x}^1))}[-d])\bigg)
    \end{split}
    \end{equation}
\end{small}
The action of $\sigma=(\sigma_n)_{n\in\NN^*}\in\prod_{n\in\NN^*} C_n$ on an element $\mathbf{F} = (F^{\doubar{x}})_{\doubar{x} \in \doubar{\MO}}\in\Multi^{\bullet}_d(\MA,\MB)$ is the element $\sigma\cdot\mathbf{F}\in\Multi^{\bullet}_d(\MA,\MB)$ given by 
\begin{equation}
    \begin{split}
(\sigma\cdot\mathbf{F})^{\doubar{x}}=\tau^{\sigma^{-1}}_{{}_{F_0(\llt(\bar{x}^{1}))}\MB_{F_0(\rrt(\bar{x}^2))}[-d],\dots, {}_{F_0(\llt(\bar{x}^n))}\MB_{F_0(\rrt(\bar{x}^1))}[-d]}\circ F^{\doubar{x}\cdot\sigma} \circ \tau^{\sigma}_{\MA[1]^{\otimes \bar{x}^1},\MA[1]^{\otimes\bar{x}^2}, \dots ,\MA[1]^{\otimes \bar{x}^n}}
    \end{split}
\end{equation}
for $\doubar{x}\in\doubar{\MO}$.
We will denote by $\Multi^{\bullet}_d(\MA,\MB)^{C_{\llg(\bullet)}}$ the space of elements of $\Multi^{\bullet}_d(\MA,\MB)$ that are invariant under the action of $\prod_{n\in\NN^*} C_n$. Moreover, we will denote $\Multi^{\bullet}_d(\MA,\MA)$ simply by $\Multi^{\bullet}_d(\MA)$.
\end{definition}

\begin{remark}
    If $(\MA,d_{\MA})$ and $(\MB,d_{\MB})$ are dg quivers and $F_0 :\MA\rightarrow\MB$, $\Multi^{\bullet}_d(\MA,\MB)$ becomes a dg quiver, with differential given by 
    \[
    d_{\Multi^{\doubar{x}}_d(\MA,\MB)}F=d_{\bigotimes\limits_{i=1}^{n-1}{}_{F_0(\llt(\bar{x}^i))}\MB_{F_0(\rrt(\bar{x}^{i+1}))}[-d]\otimes{}_{F_0(\llt(\bar{x}^n))}\MB_{F_0(\rrt(\bar{x}^1))}[-d]}\circ F -(-1)^{|F|}F\circ d_{\MA[1]^{\otimes \doubar{x}}}\]
    for $F\in \Multi^{\doubar{x}}_d(\MA,\MB)$, $\doubar{x}=(\bar{x}^1,\dots,\bar{x}^n)\in\bar{\MO}_{\MA}^n$.
\end{remark}
\begin{definition}
A \textbf{\textcolor{ultramarine}{$d$-pre-Calabi-Yau structure}} on a graded quiver $\MA$ is an element $s_{d+1}M_{\MA}$ of degree $1$ in
$\Multi^{\bullet}_d(\MA)^{C_{\llg(\bullet)}}[d+1]$, solving the Maurer-Cartan equation 
\begin{equation}
    \label{eq:MC}
    [s_{d+1}M_{\MA},s_{d+1}M_{\MA}]_{\nec}=0
\end{equation}
where $[s_{d+1}M_{\MA},s_{d+1}M_{\MA}]_{\nec}$ is the \textbf{\textcolor{ultramarine}{necklace bracket}} defined in \cite{ktv} as the graded commutator of the necklace product $\upperset{\nec}{\circ}$.
This is tantamount to require that $s_{d+1}M_{\MA}\in \Multi_d(A)^{C_{\bullet}}[d+1]$ satisfies the \\\vskip-3.8mm\noindent identities $(\operatorname{SI}^{\doubar{x}})$ given by $\sum\mathcal{E}(\mathcalboondox{D})+\sum\mathcal{E}(\mathcalboondox{D'})=0$ where the sums are over all the filled diagrams $\mathcalboondox{D}$ and $\mathcalboondox{D'}$ of type $\doubar{x}$ and of the form

\begin{minipage}{21cm}

\end{minipage}

\noindent
    respectively, for every $\doubar{x}\in\doubar{\MO}$.
    A graded quiver endowed with a $d$-pre-Calabi-Yau structure is called a \textbf{\textcolor{ultramarine}{$d$-pre-Calabi-Yau category}}. 
\end{definition}

We now recall the definition of pre-Calabi-Yau morphisms as given in \cite{ktv}.
\begin{definition}
\label{def:pcY-morphism}
Given $d$-pre-Calabi-Yau categories $(\MA,s_{d+1}M_{\MA})$ and $(\MB,s_{d+1}M_{\MB})$ with respective sets of objects $\MO_{\MA}$ and $\MO_{\MB}$ a \textbf{\textcolor{ultramarine}{$d$-pre-Calabi-Yau morphism}} $(F_0,s_{d+1}\mathbf{F}) :(\MA,s_{d+1}M_{\MA}) \rightarrow (\MB,s_{d+1}M_{\MB})$ is a map $F_0 : \MO_{\MA}\rightarrow \MO_{\MB}$ together with an element $s_{d+1}\mathbf{F}\in \Multi^{\bullet}_d(\MA,\MB)^{C_{\llg(\bullet)}}[d+1]$
of degree $0$ satisfying the following equation
\begin{equation}
\label{eq:morphism-pCY}
\tag{$\operatorname{MI}^{\doubar{x}}$}
(s_{d+1}\mathbf{F}\upperset{\multinec}{\circ} s_{d+1}M_{\MA})^{\doubar{x}}= (s_{d+1}M_{\MB}\upperset{\pre}{\circ} s_{d+1}\mathbf{F})^{\doubar{x}}
\end{equation}
for all $\doubar{x}\in\doubar{\MO}_{\MA}$ where $(s_{d+1}\mathbf{F}\upperset{\multinec}{\circ} s_{d+1}M_{\MA})^{\doubar{x}}=\sum\mathcal{E}(\mathcalboondox{D})$ and $(s_{d+1}M_{\MB}\upperset{\pre}{\circ} s_{d+1}\mathbf{F})^{\doubar{x}}=\sum\mathcal{E}(\mathcalboondox{D'})$ where \\\vskip-3.8mm\noindent the sums are over all the filled diagrams  $\mathcalboondox{D}$ and $\mathcalboondox{D'}$ of type $\doubar{x}$ and of the form

\begin{minipage}{21cm}

\end{minipage} 

\noindent respectively. Note that the left member and right member of the identity \eqref{eq:morphism-pCY} belong to
\[
\Homgr_{\kk}\bigg(\bigotimes\limits_{i=1}^{n}\MA[1]^{\otimes\bar{x}^{i}}, \bigotimes\limits_{i=1}^{n-1}{}_{F_0(\llt(\bar{x}^{i}))}\MB_{F_0(\rrt(\bar{x}^{i+1}))}[-d]\otimes{}_{F_0(\llt(\bar{x}^{n}))}\MB_{F_0(\rrt(\bar{x}^{1}))}[1]\bigg)
\]
if $\doubar{x}=(\bar{x}^1,\dots,\bar{x}^n)\in\bar{\MO}_{\MA}^n$.
\end{definition}

\begin{remark}
\label{remark:pCY-struct-dg}
    The identities $(\operatorname{SI}^{\bar{x}})$ for $\bar{x}\in\bar{\MO}_{\MA}$ are the same as the Stasheff identities given in Definition \ref{def:A-inf-alg}. Then, if $(\MA,s_{d+1}M_{\MA})$ is a $d$-pre-Calabi-Yau category the collection of maps $(s_{d+1}M_{\MA}^{\bar{x}})_{\bar{x}\in\bar{\MO}}$ turns $\MA$ into an $A_{\infty}$-category and as before, $(\MA,(s_{d+1}M_{\MA}^{x,y}[-1])_{x,y\in\MO_{\MA}})$ is a dg quiver. Moreover, a $d$-pre-Calabi-Yau morphism $(F_0,s\mathbf{F}):(\MA,s_{d+1}M_{\MA})\rightarrow(\MB,s_{d+1}M_{\MB})$ induces an $A_{\infty}$-morphism between the underlying $A_{\infty}$-categories.
\end{remark}

\begin{definition}
\label{def:cp-pCY}
Let $(\MA,s_{d+1}M_{\MA})$, $(\MB,s_{d+1}M_{\MB})$ and $(\mathcal{C},s_{d+1}M_{\mathcal{C}})$ be $d$-pre-Calabi-Yau categories with respective sets of objects $\MO_{\MA}$, $\MO_{\MB}$ and $\MO_{\mathcal{C}}$ and let $(F_0,s_{d+1}\mathbf{F}) :(\MA,s_{d+1}M_{\MA})\rightarrow (\MB,s_{d+1}M_{\MB})$ and $(G_0,s_{d+1}\mathbf{G}) :(\MB,s_{d+1}M_{\MB})\rightarrow (\mathcal{C},s_{d+1}M_{\mathcal{C}})$ be  $d$-pre-Calabi-Yau morphisms.
The \textbf{\textcolor{ultramarine}{composition of $s_{d+1}\mathbf{F}$ and $s_{d+1}\mathbf{G}$}} is the pair \[(G_0\circ F_0,s_{d+1}\mathbf{F}{\circ} s_{d+1}\mathbf{G})\] where
\[s_{d+1}\mathbf{F}{\circ} s_{d+1}\mathbf{G}\in \Multi^{\bullet}_d(\MA,\mathcal{C})^{C_{\llg(\bullet)}}[d+1]\] is given by $(s_{d+1}G{\circ} s_{d+1}F)^{\doubar{x}}=\sum\mathcal{E}(\mathcalboondox{D})$ where the sum is over all filled diagrams $\mathcalboondox{D}$ of type $\doubar{x}\in\doubar{\MO}_{\MA}$ and of the form
\begin{equation}

    \label{fig:composition}
\end{equation}
and where we have omitted the bold arrow, meaning that it any of the outgoing arrows. More precisely, the sum is over all the diagrams $\mathcalboondox{D}$ of type $\doubar{x}$ where the discs are filled with either $\mathbf{F}$ and $\mathbf{G}$ such that each outgoing arrows of a disc filled with $\mathbf{F}$ is connected to an incoming arrow of a disc filled with $\mathbf{G}$ and each incoming arrow of a disc filled with $\mathbf{G}$ is connected to an outgoing arrow of a disc filled with $\mathbf{F}$. In particular, the incoming (resp. outgoing) arrows of the diagram $\mathcalboondox{D}$ is an incoming (resp. outgoing) arrow of a disc filled with $\mathbf{F}$ (resp.$\mathbf{G}$).
\end{definition}

\begin{proposition}
For $d\in\ZZ$, $d$-pre-Calabi-Yau categories and $d$-pre-Calabi-Yau morphisms together with the composition given in Definition \ref{def:cp-pCY} define a category, denoted as $\operatorname{pCY_d}$. Given a graded quiver $\MA$ with set of objects $\MO$, the identity morphism $(Id_{\MA},\mathbf{\id}_{\MA}):(\MA,s_{d+1}M_{\MA})\rightarrow (\MA,s_{d+1}M_{\MA})$ is given by $\id_{\MA}^{x}=\id_{{}_{x}\MA_{x}}$ for $x\in\MO$ and $\id^
{\bar{x}^1,\dots,\bar{x}^n}=0$ for $(\bar{x}^1,\dots,\bar{x}^n)\in\bar{\MO}^n$ such that $n\neq 1$ or $n=1$ and $\llg(\bar{x}^1)\neq 1$.
We will denote the pair $(Id_{\MA},\mathbf{\id}_{\MA})$ by $(Id,\mathbf{\id})$ if there is no possible confusion.
\end{proposition}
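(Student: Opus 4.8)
The plan is to verify the four axioms making $\operatorname{pCY_d}$ a category: that the composite of two $d$-pre-Calabi-Yau morphisms is again such a morphism, that the composition of Definition~\ref{def:cp-pCY} is associative, that $(Id,\mathbf{\id})$ is a $d$-pre-Calabi-Yau morphism, and that it is a two-sided unit. Throughout I would argue diagrammatically, in exact parallel with the $A_{\infty}$ case of Section~\ref{A-inf-case}, replacing the compositions $\upperset{G}{\circ}$, $\upperset{M}{\circ}$ and $\upperset{s\mathbf{F}}{\circ}$ by their necklace counterparts $\upperset{\nec}{\circ}$, $\upperset{\multinec}{\circ}$ and $\upperset{\pre}{\circ}$.

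First I would check that the composite of Definition~\ref{def:cp-pCY} lies in $\Multi^{\bullet}_d(\MA,\mathcal{C})^{C_{\llg(\bullet)}}[d+1]$ and has degree $0$. The degree is additive under the gluing described there, and both $s_{d+1}\mathbf{F}$ and $s_{d+1}\mathbf{G}$ have degree $0$. For the cyclic invariance one uses that the composite is obtained by summing over all choices of distinguished (bold) outgoing arrow: this sum is exactly a cyclic symmetrization, so rotating the distinguished arrow is a sign-preserving bijection on the filled diagrams $\mathcalboondox{D}$ of a fixed type $\doubar{x}$, the Koszul signs produced by the permutations $\tau^{\sigma}$ matching those entering the action of $\prod_{n} C_n$. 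Hence the composite is $C_{\llg(\bullet)}$-invariant.

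The crux is the morphism identity $(\operatorname{MI}^{\doubar{x}})$ for the composite. As in Lemma~\ref{lemma:identities}, I would first establish the necklace analogues of the associativity identities \eqref{eq:associativity-mA} and \eqref{eq:associativity-F-mB}: inserting $s_{d+1}M_{\MA}$ at the innermost input level and then stacking the $\mathbf{G}$-discs agrees with stacking the $\mathbf{G}$-discs onto $s_{d+1}\mathbf{F}\upperset{\multinec}{\circ} s_{d+1}M_{\MA}$, and dually the outer $s_{d+1}M_{\mathcal{C}}$-layer can be pushed down to act directly on the $\mathbf{G}$-discs. Granting these, the diagrams computing $(s_{d+1}\mathbf{F}\circ s_{d+1}\mathbf{G})\upperset{\multinec}{\circ} s_{d+1}M_{\MA}$ and $s_{d+1}M_{\mathcal{C}}\upperset{\pre}{\circ}(s_{d+1}\mathbf{F}\circ s_{d+1}\mathbf{G})$ both factor through a common intermediate layer carrying $s_{d+1}M_{\MB}$ between the $\mathbf{F}$-discs and the $\mathbf{G}$-discs: the identity $(\operatorname{MI}^{\doubar{y}})$ for $\mathbf{F}$ converts the innermost $s_{d+1}M_{\MA}$ into this $s_{d+1}M_{\MB}$-layer, while $(\operatorname{MI}^{\doubar{y}})$ for $\mathbf{G}$ converts the outer $s_{d+1}M_{\mathcal{C}}$ into the same layer. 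Equating the two expressions yields $(\operatorname{MI}^{\doubar{x}})$ for the composite. This is the hard part: one must check that, after passing through the intermediate $s_{d+1}M_{\MB}$-layer, the two rewritings assemble with matching signs. These signs originate from the shifts $[-d]$ and $[d+1]$ and from the cyclic permutations $\tau^{\sigma}$ implicit in the necklace and pre compositions, and reconciling them is precisely the pre-Calabi-Yau refinement of the sign analysis already required for \eqref{eq:associativity-F-mB}.

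Associativity is then a bookkeeping statement: for $d$-pre-Calabi-Yau morphisms $(\MA,s_{d+1}M_{\MA})\rightarrow(\MB,s_{d+1}M_{\MB})\rightarrow(\mathcal{C},s_{d+1}M_{\mathcal{C}})\rightarrow(\mathcal{D},s_{d+1}M_{\mathcal{D}})$, both bracketings are computed by the same family of three-level filled diagrams, in which the outgoing arrows of the $\mathbf{F}$-discs feed the $\mathbf{G}$-discs, whose outgoing arrows feed the $\mathbf{H}$-discs, so the two sums $\sum\mathcal{E}(\mathcalboondox{D})$ coincide term by term. For the unit, I would first check that $(Id_{\MA},\mathbf{\id}_{\MA})$ satisfies $(\operatorname{MI}^{\doubar{x}})$: since $\mathbf{\id}_{\MA}$ reduces to the identity maps $\id_{{}_{x}\MA_{x}}$ on the single nonzero type singled out in the statement and vanishes on all others, the necklace composition with it is trivial and both members of $(\operatorname{MI}^{\doubar{x}})$ collapse to $s_{d+1}M_{\MA}^{\doubar{x}}$. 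Finally, composing any $(F_0,s_{d+1}\mathbf{F})$ with $(Id,\mathbf{\id})$ on either side forces every $\mathbf{\id}$-disc in the diagrams of Definition~\ref{def:cp-pCY} to be the trivial disc carrying the identity, which leaves the remaining $\mathbf{F}$-discs and hence the whole diagram unchanged; thus $(Id,\mathbf{\id})$ is a two-sided unit.
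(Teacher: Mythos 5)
The paper does not actually prove this proposition: its entire ``proof'' is the citation \emph{See for example \cite{moi} (cf. \cite{ktv,lv})}, so all verification is deferred to the references. Your proposal therefore takes a genuinely different route — a direct, self-contained check of the category axioms — and its architecture is the standard one (essentially what the cited sources carry out, and consistent with the paper's own methodology in Section \ref{A-inf-case} and Lemma \ref{lemma:identities-pcy}): closure under composition by factoring both sides of \eqref{eq:morphism-pCY} for the composite through an intermediate $s_{d+1}M_{\MB}$-layer, associativity by identifying both bracketings with the same three-level diagrams, and the unit laws by observing that all $\mathbf{\id}$-discs are trivial. This is also exactly the statement that Lemma \ref{lemma:iso-pCY} implicitly relies on when it invokes ``the associativity of the composition,'' so having a written argument is genuinely useful, which is what your route buys; what the paper's citation buys is brevity and the avoidance of the sign bookkeeping. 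That bookkeeping is also the honest cost of your plan: the necklace associativity identities you need (pushing $s_{d+1}M_{\MA}$ through two layers of morphism discs, and dually for $s_{d+1}M_{\mathcal{C}}$) are not literally Lemma \ref{lemma:identities-pcy}, which mixes a single morphism with structure maps, so they must be stated and proved separately, and the Koszul signs from the shifts $[-d]$, $[d+1]$ and the cyclic permutations $\tau^{\sigma}$ must be reconciled there. Likewise, your justification of cyclic invariance of the composite is phrased loosely: it is not that the sum over bold-arrow choices is ``a cyclic symmetrization,'' but that rotating $\doubar{x}$ induces a sign-compatible bijection on the filled diagrams because $s_{d+1}\mathbf{F}$ and $s_{d+1}\mathbf{G}$ are themselves $C_{\llg(\bullet)}$-invariant. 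With those points written out your argument closes; as it stands it is a correct and well-structured plan rather than a complete proof, but nothing in it would fail.
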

\begin{proof}
    See for example \cite{moi} (cf. \cite{ktv,lv}).
\end{proof}

\begin{definition}
    Given d-pre-Calabi-Yau categories $(\MA,s_{d+1}M_{\MA})$ and $(\MB,s_{d+1}M_{\MB})$, we say that a d-pre-Calabi-Yau morphism $(F_0,s_{d+1}\mathbf{F}):(\MA,s_{d+1}M_{\MA})\rightarrow (\MB,s_{d+1}M_{\MB})$ is a \textbf{\textcolor{ultramarine}{quasi-isomorphism}} if $F_0$ is a bijection and the map $s_{d+1}F^{x,y} : {}_{x}\MA_{y}[1]\rightarrow {}_{x}\MB_{y}[1]$ is a quasi-isomorphism of dg vector spaces for every $x,y\in\MO_{\MA}$.
    Moreover, $s_{d+1}\mathbf{F}$ is an \textbf{\textcolor{ultramarine}{isomorphism}} if there exists a d-pre-Calabi-Yau morphism $(G_0,s_{d+1}\mathbf{G}):(\MB,s_{d+1}M_{\MB})\rightarrow (\MA,s_{d+1}M_{\MA})$ such that, $G_0\circ F_0=Id_{\MA}$, $F_0\circ G_0=Id_{\MB}$, $s_{d+1}\mathbf{G}\circ s_{d+1}\mathbf{F}=\mathbf{\id}_{\MA}$, and $s_{d+1}\mathbf{F}\circ s_{d+1}\mathbf{G}=\mathbf{\id}_{\MB}$.
\end{definition}

\begin{lemma}
\label{lemma:iso-pCY}
 Let $(\MA,s_{d+1}M_{\MA})$ and $(\MB,s_{d+1}M_{\MB})$ be $d$-pre-Calabi-Yau categories and consider a $d$-pre-Calabi-Yau morphism $(F_0,s_{d+1}\mathbf{F}):(\MA,s_{d+1}M_{\MA})\rightarrow (\MB,s_{d+1}M_{\MB})$ such that $F_0$ is a bijection. Then, if $s_{d+1}F^{x,y}$ is an isomorphism of dg vector spaces for every $x,y\in\MO_{\MA}$, $s_{d+1}\mathbf{F}$ is an isomorphism of $d$-pre-Calabi-Yau categories.
\end{lemma}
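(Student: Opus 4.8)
The plan is to mimic the proof of Lemma \ref{lemma:iso-A-inf}, constructing the inverse morphism $(G_0,s_{d+1}\mathbf{G})$ component by component by induction on the size $N(\doubar{x})$, the one genuinely new feature being that every component produced must be invariant under the cyclic action of $\prod_{n\in\NN^*}C_n$. Since $F_0$ is a bijection we set $G_0=F_0^{-1}$. Since each $s_{d+1}F^{x,y}\colon {}_{x}\MA_{y}[1]\to {}_{x}\MB_{y}[1]$ is an isomorphism of dg vector spaces, we let $s_{d+1}G^{F_0(x),F_0(y)}$ be its inverse; the bijectivity of $F_0$ then defines $s_{d+1}G^{x,y}$ for all $x,y\in\MO_{\MB}$. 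These are the components indexed by the tuples $\doubar{x}$ of minimal size, and they are automatically $C_{\llg(\bullet)}$-invariant since the cyclic group acting on them is trivial.

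For the inductive step, suppose $s_{d+1}G^{\doubar{y}}$ has been defined for all $\doubar{y}\in\doubar{\MO}_{\MB}$ with $N(\doubar{y})<N(\doubar{x})$ in such a way that $(s_{d+1}\mathbf{G}\circ s_{d+1}\mathbf{F})^{\doubar{y}}$ agrees with $\mathbf{\id}_{\MA}^{\doubar{y}}$ there. Expanding $(s_{d+1}\mathbf{G}\circ s_{d+1}\mathbf{F})^{\doubar{x}}$ as the sum of necklace diagrams described in Definition \ref{def:cp-pCY}, exactly one family of diagrams contains the unknown top component $s_{d+1}G^{\doubar{x}}$, namely the one in which every input strand of the single disc filled with $\mathbf{G}$ is fed by a first component of $s_{d+1}\mathbf{F}$; every other diagram involves only components $s_{d+1}G^{\doubar{y}}$ with $N(\doubar{y})<N(\doubar{x})$, since any disc filled with $\mathbf{F}$ carrying more than one input forces the adjacent disc filled with $\mathbf{G}$ to have strictly fewer input strands. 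As the first components of $s_{d+1}\mathbf{F}$ are invertible, we solve the equation $(s_{d+1}\mathbf{G}\circ s_{d+1}\mathbf{F})^{\doubar{x}}=\mathbf{\id}_{\MA}^{\doubar{x}}$ for $s_{d+1}G^{\doubar{x}}$, declaring it to be $-\sum\mathcal{E}(\mathcalboondox{D})$ over the remaining diagrams, precomposed with the inverse first components, precisely as in Lemma \ref{lemma:iso-A-inf}.

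The crucial extra verification is that the component $s_{d+1}G^{\doubar{x}}$ so defined lies in the $C_{\llg(\bullet)}$-invariant subspace. This should follow because the defining equation $(s_{d+1}\mathbf{G}\circ s_{d+1}\mathbf{F})^{\doubar{x}}=\mathbf{\id}_{\MA}^{\doubar{x}}$ is itself equivariant for the cyclic action, the first components of $s_{d+1}\mathbf{F}$ and $s_{d+1}\mathbf{G}$ are equivariant, and the lower components are invariant by the inductive hypothesis; hence the unique solution inherits the invariance. Checking this compatibility with the cyclic action at each stage is the main obstacle and the principal point distinguishing the argument from the $A_\infty$ case. This produces an element $s_{d+1}\mathbf{G}\in\Multi^{\bullet}_d(\MB,\MA)^{C_{\llg(\bullet)}}[d+1]$ of degree $0$ with $s_{d+1}\mathbf{G}\circ s_{d+1}\mathbf{F}=\mathbf{\id}_{\MA}$. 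As in Lemma \ref{lemma:iso-A-inf}, this construction used only the invertibility of the first components, a property $s_{d+1}\mathbf{G}$ also enjoys, so the same recipe yields $s_{d+1}\mathbf{H}$ with $s_{d+1}\mathbf{H}\circ s_{d+1}\mathbf{G}=\mathbf{\id}_{\MB}$; the associativity of the pre-Calabi-Yau composition (valid at the level of elements, independently of the morphism identities) then forces $s_{d+1}\mathbf{H}=s_{d+1}\mathbf{F}$, whence $s_{d+1}\mathbf{F}\circ s_{d+1}\mathbf{G}=\mathbf{\id}_{\MB}$ as well.

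It remains to check that $s_{d+1}\mathbf{G}$ is a genuine $d$-pre-Calabi-Yau morphism, i.e.\ that it satisfies the identities $(\operatorname{MI}^{\doubar{x}})$ for every $\doubar{x}\in\doubar{\MO}_{\MB}$. Following the last paragraph of the proof of Lemma \ref{lemma:iso-A-inf}, one starts from the morphism identity $s_{d+1}\mathbf{F}\upperset{\multinec}{\circ}s_{d+1}M_{\MA}=s_{d+1}M_{\MB}\upperset{\pre}{\circ}s_{d+1}\mathbf{F}$ satisfied by $s_{d+1}\mathbf{F}$, pre- and post-composes it with $s_{d+1}\mathbf{G}$, and uses the relations $s_{d+1}\mathbf{F}\circ s_{d+1}\mathbf{G}=\mathbf{\id}_{\MB}$ and $s_{d+1}\mathbf{G}\circ s_{d+1}\mathbf{F}=\mathbf{\id}_{\MA}$. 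Arguing by induction on $N(\doubar{x})$, the error terms are exactly those diagrams containing a disc filled with $\mathbf{G}$ with more than one input strand glued to a disc filled with $\mathbf{F}$, and these cancel because $s_{d+1}\mathbf{F}\circ s_{d+1}\mathbf{G}=\mathbf{\id}_{\MB}$. The pre-Calabi-Yau analogues of the associativity and compatibility identities of Lemma \ref{lemma:identities}, now for the necklace product $\upperset{\multinec}{\circ}$ and the pre product $\upperset{\pre}{\circ}$, supply the diagram rearrangements needed to carry this out.
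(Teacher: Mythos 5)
Your construction follows the paper's own strategy quite closely: build $s_{d+1}\mathbf{G}$ inductively by isolating the unique composition diagram containing the unknown top component, use associativity of the composition to promote the one-sided inverse to a two-sided one (so that $s_{d+1}\mathbf{H}=s_{d+1}\mathbf{F}$), and finally verify the identities \eqref{eq:morphism-pCY} for $s_{d+1}\mathbf{G}$ by induction, using $s_{d+1}\mathbf{F}\circ s_{d+1}\mathbf{G}=\mathbf{\id}_{\MB}$ to kill the error terms. Your explicit attention to $C_{\llg(\bullet)}$-invariance is a point the paper leaves implicit, and your equivariance argument for it is reasonable.

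There is, however, a genuine gap in your induction scheme. You induct on the size $N(\doubar{x})$ alone, claiming that every diagram other than the top one involves only components $s_{d+1}G^{\doubar{y}}$ with $N(\doubar{y})<N(\doubar{x})$, and you justify this by noting that an $\mathbf{F}$-disc with more than one input forces the adjacent $\mathbf{G}$-disc to consume fewer input strands. That justification only covers the $A_\infty$-type diagrams, i.e.\ those in which every $\mathbf{F}$-disc has a single output. The genuinely new feature of the pre-Calabi-Yau composition is that $s_{d+1}\mathbf{F}$ has components with \emph{several outputs} (for instance a component with one input and two outputs), and diagrams containing such $\mathbf{F}$-discs do occur in $(s_{d+1}\mathbf{G}\circ s_{d+1}\mathbf{F})^{\doubar{x}}$. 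In such a diagram the $\mathbf{G}$-discs have strictly smaller \emph{length}, since the $\llg(\doubar{x})$ outputs of the diagram are now distributed over several $\mathbf{G}$-discs; but their \emph{size} need not be strictly smaller than $N(\doubar{x})$, because an $\mathbf{F}$-disc with more outputs than inputs increases the number of strands fed into the $\mathbf{G}$-discs. So the inductive step as you state it may call on components $s_{d+1}G^{\doubar{y}}$ with $N(\doubar{y})\geq N(\doubar{x})$ that have not yet been constructed. This is precisely why the paper runs a lexicographic induction, first on $\llg(\doubar{x})$ and then on $N(\doubar{x})$, and correspondingly claims only $N(\doubar{y})<N(\doubar{x})$ for the diagrams whose $\mathbf{F}$-discs all have one output, and only $\llg(\doubar{y})<\llg(\doubar{x})$ for the diagrams containing a multi-output $\mathbf{F}$-disc. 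The same correction is needed in your final paragraph: the verification of $(\operatorname{MI}^{\doubar{x}})$ for $s_{d+1}\mathbf{G}$ must likewise be an induction on length first and size second, as in the paper.
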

\begin{proof}
    By Lemma \ref{lemma:iso-A-inf}, we have already constructed maps $G^{\bar{x}} : \MB[1]^{\otimes \bar{x}}\rightarrow \MA[1]$ for every $\bar{x}\in\bar{\MO}_{\MB}$.
    Consider $\doubar{x}=(\bar{x}^1,\dots,\bar{x}^n)\in\bar{\MO}^n_{\MB}$ with $n>1$ and suppose that we have defined the maps \[G^{\doubar{y}} : \MB[1]^{\otimes \bar{y}^1}\otimes\dots\otimes \MB[1]^{\otimes \bar{y}^m}\rightarrow {}_{\llt(\bar{y}^1)}\MA_{\rrt(\bar{y}^2)}[-d]\otimes\dots\otimes {}_{\llt(\bar{y}^m)}\MA_{\rrt(\bar{y}^1)}[-d]\] satisfying that $(s_{d+1}G\circ s_{d+1}F)^{\doubar{y}}=0$ for every $\doubar{y}\in\doubar{\MO}_{\MB}$ with $1<\llg(\doubar{y})<\llg(\doubar{x})$ or $\llg(\doubar{y})=\llg(\doubar{x})$ and $N(\doubar{y})<N(\doubar{x})$ and that $(s_{d+1}F\circ s_{d+1}G)^{\doubar{z}}=0$ and $\doubar{z}\in\doubar{\MO}_{\MA}$ with $\llg(\doubar{z})<\llg(\doubar{x})$ or $\llg(\doubar{z})=\llg(\doubar{x})$ and $N(\doubar{z})<N(\doubar{x})$.
   
    We have that $(s_{d+1}\mathbf{G}\circ s_{d+1}\mathbf{F})^{\doubar{x}}=\mathcal{E}(\mathcalboondox{D_1})+\sum\mathcal{E}(\mathcalboondox{D_2})$ where $\mathcalboondox{D_1}$ is the unique diagram of type $\doubar{x}$ containing a disc of type $\doubar{x}$ filled with $\mathbf{G}$ whose incoming arrows are shared with discs with one incoming and one outgoing arrow filled with $\mathbf{F}$ and where the sum is over all the filled diagrams $\mathcalboondox{D_2}$ of type $\doubar{x}$ and of the form



\noindent
respectively. Note that $\mathcalboondox{D_1}$ is the only diagram containing a disc of type $\doubar{x}$ filled with $\mathbf{G}$. 
    Moreover, diagrams involved in the previous sum $\sum\mathcal{E}(\mathcalboondox{D_2})$ contain either only discs filled with $\mathbf{F}$ having one outgoing arrow or at least one disc filled with $\mathbf{F}$ with several outgoing arrows.
    In the first case, the terms only contain discs filled with $\mathbf{G}$ of type  $\doubar{y}\in\doubar{\MO}_{\MB}$ with $N(\doubar{y})<N(\doubar{x})$.
    Indeed, the discs filled with $\mathbf{F}$ only have one outgoing arrow but there is at least one of them having more that one incoming arrow. 
    In the second case, they only contain discs filled with $\mathbf{G}$ of type $\doubar{y}\in\doubar{\MO}_{\MB}$ with $\llg(\doubar{y})<\llg(\doubar{x})$ since the total number of outputs is $\llg(\doubar{x})$.
    This allows us to define $s_{d+1}G^{\doubar{x}}=-\sum\mathcal{E}(\mathcalboondox{D})$ where the sum is over all the filled diagrams $\mathcalboondox{D}$ of type $\doubar{x}$ and of the form



\noindent
where the dotted arrows represent discs with one incoming and one outgoing arrow filled with $\mathbf{G}$.
    Note that we did not use the fact that $s_{d+1}\mathbf{F}$ satisfies the identities \eqref{eq:morphism-pCY}. 
    Thus, there exists an element $s_{d+1}\mathbf{H}\in\Multi_d^{\bullet}(\MA,\MB)^{C_{\llg(\bullet)}}[d+1]$ such that $s_{d+1}\mathbf{H}\circ s_{d+1}\mathbf{G}=\id_{\MB}$. Using the associativity of the composition, we get that $\mathbf{H}=\mathbf{F}$ and thus $s_{d+1}\mathbf{G}\circ s_{d+1}\mathbf{F}=\id_{\MA}$ and $s_{d+1}\mathbf{F} \circ s_{d+1}\mathbf{G}=\id_{\MB}$.
    
    It remains to check that $(G_0,s_{d+1}\mathbf{G})$ is a $d$-pre-Calabi-Yau morphism. By Lemma \ref{lemma:iso-A-inf} the identities \eqref{eq:stasheff-morphisms} are satisfied for $\bar{x}\in\bar{\MO}_{\MB}$. We now consider $\doubar{x}\in\doubar{\MO}_{\MB}$ with $\llg(\doubar{x})>1$ and suppose that the identities $(\operatorname{MI}^{\doubar{y}})$ are satisfied for every $\doubar{y}\in\doubar{\MO}_{\MB}$ with $\llg(\doubar{y})<\llg(\doubar{x})$.
    Since $s_{d+1}\mathbf{F}$ satisfies the identities \eqref{eq:morphism-pCY}, for every $\doubar{x}\in\doubar{\MO}_{\MA}$ we have that $\sum\mathcal{E}(\mathcalboondox{D})=\sum\mathcal{E}(\mathcalboondox{D'})$ where the sums are over
    all the diagrams $\mathcalboondox{D}$ and $\mathcalboondox{D'}$ of type $\doubar{x}$ and of the form 

\begin{minipage}{21cm}


    \noindent Then, by induction and since $(s_{d+1}\mathbf{F}\circ s_{d+1}\mathbf{G})^{\doubar{y}}=0$ for $\doubar{y}\neq(x,y)$ for $x,y\in\MO_{\MB}$, we have that $\sum\mathcal{E}(\mathcalboondox{D'_3})=0$ so $(G_0,s_{d+1}\mathbf{G})$ is a $d$-pre-Calabi-Yau morphism.
\end{proof}
\subsection{Homotopy Transfer Theorem for pre-Calabi-Yau categories}
In this subsection, we generalize the results of \cite{lv} to the case of $d$-pre-Calabi-Yau categories, giving direct proofs inspired by \cite{petersen}. We first define two different ways of composing maps and then give useful identities for the proofs of Theorem \ref{thm:2-pcy} and Theorem \ref{thm:3-pcy}.
\begin{definition}
Let $\MA$ and $\MB$ be two graded quivers and consider $s_{d+1}\mathbf{F}\in \Multi_d^{\bullet}(\MA,\MB)[d+1]$.
    Given $s_{d+1}M\in\Multi_d^{\bullet}(\MB)[d+1]$ and $s_{d+1}\mathbf{H}\in \Multi_d^{\bullet}(\MA,\MB)[d+1]$
    we define the \textbf{\textcolor{ultramarine}{lower composition of $s_{d+1}M$ and $s_{d+1}\mathbf{H}$ with respect to $s_{d+1}\mathbf{F}$}} as
    the element $(s_{d+1}M\upperset{s_{d+1}\mathbf{F}}{\circ} s_{d+1}\mathbf{H})\in\Multi_d^{\bullet}(\MA,\MB)[d+1]$ \\ \vskip-4mm \noindent such that \[(s_{d+1}M\upperset{s_{d+1}\mathbf{F}}{\circ} s_{d+1}\mathbf{H})^{\doubar{x}}=\sum\mathcal{E}(\mathcalboondox{D})\]where the sum is over all the filled diagrams $\mathcalboondox{D}$ of type $\doubar{x}$ and of the form


    
    Moreover, given $s_{d+1}M'\in \Multi_d^{\bullet}(\MA)[d+1]$ we define the \textbf{\textcolor{ultramarine}{upper composition of $s_{d+1}M'$ and $s_{d+1}\mathbf{H}$ with respect to $s_{d+1}\mathbf{F}$}} as the 
    element $s_{d+1}\mathbf{H}\oversetF{\scriptscriptstyle{s_{d+1}\mathbf{F}}}{\circ} s_{d+1}M'\in\Multi_d^{\bullet}(\MA,\MB)[d+1]$ such that \[(s_{d+1}\mathbf{H}\oversetF{\scriptscriptstyle{s_{d+1}\mathbf{F}}}{\circ} s_{d+1}M')^{\doubar{x}}=\sum\mathcal{E}(\mathcalboondox{D})\]  where the sum is over all the filled diagrams $\mathcalboondox{D}$ of type $\doubar{x}$ and of the form


\end{definition}

\begin{lemma}
\label{lemma:identities-pcy}
Let $s_{d+1}M_{\MA}\in\Multi_d^{\bullet}(\MA)^{C_{\llg(\bullet)}}[d+1]$, $s_{d+1}M_{\MB}\in\Multi_d^{\bullet}(\MB)^{C_{\llg(\bullet)}}[d+1]$ be two elements of degree $1$ and consider $s_{d+1}\mathbf{F}\in\Multi_d^{\bullet}(\MA,\MB)^{C_{\llg(\bullet)}}[d+1]$ of degree $0$.
Then, 
\begin{equation}
    \label{eq:associativity-MA}
    s_{d+1}M_{\MA}\upperset{\scriptscriptstyle{\nec}}{\circ}(s_{d+1}M_{\MA}\upperset{\scriptscriptstyle{\nec}}{\circ}s_{d+1}M_{\MA})=(s_{d+1}M_{\MA}\upperset{\scriptscriptstyle{\nec}}{\circ}s_{d+1}M_{\MA})\upperset{\scriptscriptstyle{\nec}}{\circ}s_{d+1}M_{\MA}
\end{equation}
Moreover, we have that
\begin{equation}
    \label{eq:associativity-F-MB}
    (s_{d+1}M_{\MB}\upperset{\scriptscriptstyle{\pre}}{\circ}s_{d+1}\mathbf{F})\oversetF{\scriptscriptstyle{s_{d+1}\mathbf{F}}}{\circ}s_{d+1}M_{\MA}=s_{d+1}M_{\MB}\uppersetF{\scriptscriptstyle{s_{d+1}\mathbf{F}}}{\circ}(s_{d+1}\mathbf{F}\upperset{\scriptscriptstyle{\multinec}}{\circ}s_{d+1}M_{\MA})
\end{equation}
and 
\begin{equation}
    \label{eq:associativity-F-MA}
     (s_{d+1}\mathbf{F}\upperset{\scriptscriptstyle{\multinec}}{\circ}s_{d+1}M_{\MA})\oversetF{\scriptscriptstyle{s_{d+1}\mathbf{F}}}{\circ}s_{d+1}M_{\MA}=s_{d+1}\mathbf{F}\upperset{\scriptscriptstyle{\multinec}}{\circ}(s_{d+1}M_{\MA}\upperset{\scriptscriptstyle{\nec}}{\circ}s_{d+1}M_{\MA})
\end{equation}
\end{lemma}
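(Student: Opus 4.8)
The plan is to prove the three identities by the graphical calculus of \cite{ktv,moi}, following closely the two cases already treated in Lemma \ref{lemma:identities} for $A_{\infty}$-categories. For each identity I would expand both members as signed sums $\sum\mathcal{E}(\mathcalboondox{D})$ over filled diagrams of a fixed shape and then compare the two families of diagrams. The identities \eqref{eq:associativity-MA} and \eqref{eq:associativity-F-MA} are of associativity type and will follow the pattern used for \eqref{eq:associativity-mA}, whereas \eqref{eq:associativity-F-MB} is a regrouping identity and will follow the pattern used for \eqref{eq:associativity-F-mB}.

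For \eqref{eq:associativity-MA} I would expand $s_{d+1}M_{\MA}\upperset{\nec}{\circ}(s_{d+1}M_{\MA}\upperset{\nec}{\circ}s_{d+1}M_{\MA})$ and $(s_{d+1}M_{\MA}\upperset{\nec}{\circ}s_{d+1}M_{\MA})\upperset{\nec}{\circ}s_{d+1}M_{\MA}$ as sums over diagrams carrying three $M_{\MA}$ discs glued along a necklace. The fully nested diagrams, in which each disc feeds an output into the next, occur with the same coefficient on both members and coincide. The remaining diagrams, in which two of the discs are grafted in parallel onto a common disc, occur only in the expansion of one member and there split into two families differing precisely by the order in which the two degree-$1$ discs $M_{\MA}$ are read; reversing that order reverses their first outgoing arrows, and since $|s_{d+1}M_{\MA}|=1$ this contributes a sign $-1$, so the two families cancel. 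This is the exact analogue of the argument for \eqref{eq:associativity-mA}, and it is nothing but the vanishing of the necklace-bracket obstruction organised on three equal odd inputs.

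Identity \eqref{eq:associativity-F-MA} is treated in the same manner, now with two copies of $M_{\MA}$ and one copy of $\mathbf{F}$: the member $s_{d+1}\mathbf{F}\upperset{\multinec}{\circ}(s_{d+1}M_{\MA}\upperset{\nec}{\circ}s_{d+1}M_{\MA})$ enumerates the diagrams whose two $M_{\MA}$ discs form a necklace feeding a single layer of $\mathbf{F}$ discs, and these are exactly the nested diagrams contained in $(s_{d+1}\mathbf{F}\upperset{\multinec}{\circ}s_{d+1}M_{\MA})\oversetF{s_{d+1}\mathbf{F}}{\circ}s_{d+1}M_{\MA}$; the extra diagrams of the latter, in which the lower $M_{\MA}$ is grafted onto distinct $\mathbf{F}$ discs, cancel pairwise by the same reversal sign. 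For \eqref{eq:associativity-F-MB} I would instead observe that both members enumerate one and the same family of diagrams, namely a single top disc $M_{\MB}$ fed by a layer of $\mathbf{F}$ discs, a subset of which is in turn fed from below by a single disc $M_{\MA}$: the member $(s_{d+1}M_{\MB}\upperset{\pre}{\circ}s_{d+1}\mathbf{F})\oversetF{s_{d+1}\mathbf{F}}{\circ}s_{d+1}M_{\MA}$ assembles this configuration by first attaching the $\mathbf{F}$ discs to $M_{\MB}$ and then grafting $M_{\MA}$ below, while $s_{d+1}M_{\MB}\uppersetF{s_{d+1}\mathbf{F}}{\circ}(s_{d+1}\mathbf{F}\upperset{\multinec}{\circ}s_{d+1}M_{\MA})$ first grafts $M_{\MA}$ below the $\mathbf{F}$ discs and then attaches the result to $M_{\MB}$; since the two recipes produce the same filled diagrams, the sums agree term by term, exactly as for \eqref{eq:associativity-F-mB}.

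The hard part will be the sign bookkeeping, which is heavier here than in the $A_{\infty}$ case because of the cyclic (necklace) structure of the codomains and the $C_{\llg(\bullet)}$-invariance: every gluing accumulates Koszul signs through the permutation maps $\tau^{\sigma}$. Concretely, I must check both that the parallel diagrams in \eqref{eq:associativity-MA} and \eqref{eq:associativity-F-MA} cancel with exactly the right relative sign, and that in \eqref{eq:associativity-F-MB} the signs accumulated along the two assembly orders coincide. I expect this to be routine but delicate, and it is the only genuine departure from the linear diagram calculus of Lemma \ref{lemma:identities}.
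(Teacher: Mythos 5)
Your plan is essentially the paper's own proof: each identity is checked by expanding both members as signed sums over filled diagrams, with \eqref{eq:associativity-F-MB} following because both members enumerate exactly the same family of diagrams (top disc $M_{\MB}$, a layer of $\mathbf{F}$ discs, one $M_{\MA}$ disc below), and \eqref{eq:associativity-MA}, \eqref{eq:associativity-F-MA} following because the defect (parallel-grafting) diagrams cancel in pairs via the sign created by exchanging two copies of the degree-$1$ element $s_{d+1}M_{\MA}$ --- precisely the mechanism the paper invokes. The only discrepancy is combinatorial: in the paper's accounting the parallel-type diagrams appear in \emph{both} members of \eqref{eq:associativity-MA} (six families on each side), of which some cancel internally by the odd-degree swap or against the minus-signed family, while the remainder are matched \emph{across} the two sides, so your claim that such diagrams ``occur only in the expansion of one member'' and cancel purely internally would need to be corrected once you actually carry out the expansion.
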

\begin{proof}
We first show \eqref{eq:associativity-MA}. 
For every $\doubar{x}\in\doubar{\MO}_{\MA}$ left hand term is given by 
\begin{equation}
\begin{split}
    (s_{d+1}M_{\MA}\upperset{\scriptscriptstyle{\nec}}{\circ}&(s_{d+1}M_{\MA}\upperset{\scriptscriptstyle{\nec}}{\circ}s_{d+1}M_{\MA}))^{\doubar{x}}\\&=\sum\mathcal{E}(\mathcalboondox{D_1})+\sum\mathcal{E}(\mathcalboondox{D_2})+\sum\mathcal{E}(\mathcalboondox{D_3})+\sum\mathcal{E}(\mathcalboondox{D_4})+\sum\mathcal{E}(\mathcalboondox{D_5})-\sum\mathcal{E}(\mathcalboondox{D_6})
\end{split}
\end{equation}
where the sums are over all the filled diagrams $\mathcalboondox{D_i}$ of type $\doubar{x}$ and of the form

\noindent
\begin{minipage}{21cm}

\end{minipage}
respectively.
The minus sign before $\sum\mathcal{E}(\mathcalboondox{D_6})$ comes from the fact that the order of the first outgoing arrows of the discs filled with $M_{\MA}$ have changed their order and that $s_{d+1}M_{\MA}$ is of degree $1$. We have that $\sum\mathcal{E}(\mathcalboondox{D_6})=\sum\mathcal{E}(\mathcalboondox{D_2})$.
Moreover, $\sum\mathcal{E}(\mathcalboondox{D_5})=0$. Indeed, the two discs sharing an arrow with the one with the bold arrow are filled with $M_{\MA}$ and are ordered by the labeling of their outgoing arrows. There are two different cases: either the first disc is filled with the first $M_{\MA}$ appearing in the identity \eqref{eq:associativity-MA} or with the second one. Since $s_{d+1}M_{\MA}$ is of degree $1$, the second case gives rise to a minus sign and the two cases then cancel each other.

We thus get 
\begin{equation}
     (s_{d+1}M_{\MA}\upperset{\scriptscriptstyle{\nec}}{\circ}(s_{d+1}M_{\MA}\upperset{\scriptscriptstyle{\nec}}{\circ}s_{d+1}M_{\MA}))^{\doubar{x}}=\sum\mathcal{E}(\mathcalboondox{D_1})+\sum\mathcal{E}(\mathcalboondox{D_3})+\sum\mathcal{E}(\mathcalboondox{D_4})
\end{equation}

On the other hand, the right hand term is given by 
\begin{equation}
\begin{split}
    ((s_{d+1}M_{\MA}\upperset{\scriptscriptstyle{\nec}}{\circ}&s_{d+1}M_{\MA})\upperset{\scriptscriptstyle{\nec}}{\circ}s_{d+1}M_{\MA}))^{\doubar{x}}\\&=\sum\mathcal{E}(\mathcalboondox{D_1'})+\sum\mathcal{E}(\mathcalboondox{D_2'})+\sum\mathcal{E}(\mathcalboondox{D_3'})+\sum\mathcal{E}(\mathcalboondox{D_4'})+\sum\mathcal{E}(\mathcalboondox{D_5'})-\sum\mathcal{E}(\mathcalboondox{D_6'})
\end{split}
\end{equation}
where the sums are over all the filled diagrams $\mathcalboondox{D_i'}$ of type $\doubar{x}$ and of the form

\noindent
\begin{minipage}{21cm}

\end{minipage}

\noindent Again, the minus before $\sum\mathcal{E}(\mathcalboondox{D_6'})$ comes from the exchange of the order of two $s_{d+1}M_{\MA}$. Moreover, $\sum\mathcal{E}(\mathcalboondox{D_6'})=\sum\mathcal{E}(\mathcalboondox{D_4'})$ and $\sum\mathcal{E}(\mathcalboondox{D_2'})=0$ for the same reason as $\sum\mathcal{E}(\mathcalboondox{D_5})=0$.
We thus get 
 \begin{equation}
     ((s_{d+1}M_{\MA}\upperset{\scriptscriptstyle{\nec}}{\circ}s_{d+1}M_{\MA})\upperset{\scriptscriptstyle{\nec}}{\circ}s_{d+1}M_{\MA}))^{\doubar{x}}=\sum\mathcal{E}(\mathcalboondox{D_1'})+\sum\mathcal{E}(\mathcalboondox{D_3'})+\sum\mathcal{E}(\mathcalboondox{D_5'})
\end{equation}
Since $\sum\mathcal{E}(\mathcalboondox{D_1})=\sum\mathcal{E}(\mathcalboondox{D_1'})$, $\sum\mathcal{E}(\mathcalboondox{D_4})=\sum\mathcal{E}(\mathcalboondox{D_5'})$ and $\sum\mathcal{E}(\mathcalboondox{D_6})=\sum\mathcal{E}(\mathcalboondox{D_3'})$, the identity \eqref{eq:associativity-MA} is proved.

We now prove \eqref{eq:associativity-F-MB}. First, note that 
$((s_{d+1}M_{\MB}\upperset{\scriptscriptstyle{\pre}}{\circ}s_{d+1}\mathbf{F})\oversetF{\scriptscriptstyle{s_{d+1}\mathbf{F}}}{\circ}s_{d+1}M_{\MA})^{\doubar{x}}=\sum\mathcal{E}(\mathcalboondox{D})$
where the \\\vskip-3.8mm\noindent sum is over all the filled diagrams $\mathcalboondox{D}$ of type $\doubar{x}$ and of the form



\noindent It is easy to check that $\big(s_{d+1}M_{\MB}\upperset{\scriptscriptstyle{s_{d+1}F}}{\circ}(s_{d+1}\mathbf{F}\upperset{\scriptscriptstyle{\multinec}}{\circ}s_{d+1}M_{\MA})\big)^{\doubar{x}}=\sum\mathcal{E}(\mathcalboondox{D})$ which proves \\\vskip-3.8mm\noindent \eqref{eq:associativity-F-MB}.
Finally, we have that 
\begin{small}
\begin{equation}
     ((s_{d+1}\mathbf{F}\upperset{\scriptscriptstyle{\multinec}}{\circ}s_{d+1}M_{\MA})\oversetF{\scriptscriptstyle{s_{d+1}\mathbf{F}}}{\circ}s_{d+1}M_{\MA})^{\doubar{x}}=\sum\mathcal{E}(\mathcalboondox{D_1})+\sum\mathcal{E}(\mathcalboondox{D_2})+\sum\mathcal{E}(\mathcalboondox{D_3})+\sum\mathcal{E}(\mathcalboondox{D_4})-\sum\mathcal{E}(\mathcalboondox{D_5})
\end{equation}
\end{small}
where the sums are over all the filled diagrams of type $\doubar{x}$ and of the form

\noindent
\begin{minipage}{21cm}

\end{minipage}

\noindent
respectively. Note that $\sum\mathcal{E}(\mathcalboondox{D_4})=\sum\mathcal{E}(\mathcalboondox{D_5})$ and $\sum\mathcal{E}(\mathcalboondox{D_3})=0$, so that 
\begin{equation}
     ((s_{d+1}F\upperset{\scriptscriptstyle{\multinec}}{\circ}s_{d+1}M_{\MA})\oversetF{\scriptscriptstyle{s_{d+1}F}}{\circ}s_{d+1}M_{\MA})^{\doubar{x}}=\sum\mathcal{E}(\mathcalboondox{D_1})+\sum\mathcal{E}(\mathcalboondox{D_2})
\end{equation}
It is clear that 
\begin{equation}
     (s_{d+1}F\upperset{\scriptscriptstyle{\multinec}}{\circ}(s_{d+1}M_{\MA}\upperset{\scriptscriptstyle{\nec}}{\circ}s_{d+1}M_{\MA}))^{\doubar{x}}=\sum\mathcal{E}(\mathcalboondox{D_1})+\sum\mathcal{E}(\mathcalboondox{D_2})
\end{equation}
This finishes the proof.
\end{proof}

\begin{corollary}
Let $s_{d+1}M_{\MA}\in\Multi_d^{\bullet}(\MA)^{C_{\llg(\bullet)}}[d+1]$, $s_{d+1}M_{\MB}\in\Multi_d^{\bullet}(\MB)^{C_{\llg(\bullet)}}[d+1]$ be two elements of degree $1$ and consider $s_{d+1}\mathbf{F}\in\Multi_d^{\bullet}(\MA,\MB)^{C_{\llg(\bullet)}}[d+1]$ of degree $0$.
Suppose that $s_{d+1}M_{\MB}$ satisfies the identities $(\operatorname{SI}^{\doubar{x}})$ for each $\doubar{x}\in\doubar{\MO}_{\MB}$.
Then, the following holds.
\begin{small}
\begin{equation}
\begin{split}
    \label{eq:second-identity-pcy}
    s_{d+1}\mathbf{F}\upperset{\scriptscriptstyle{\multinec}}{\circ}(s_{d+1}M_{\MA}\upperset{\scriptscriptstyle{\nec}}{\circ}s_{d+1}M_{\MA})&=(s_{d+1}\mathbf{F}\upperset{\scriptscriptstyle{\multinec}}{\circ}s_{d+1}M_{\MA}-s_{d+1}M_{\MB}\upperset{\scriptscriptstyle{\pre}}{\circ} s_{d+1}\mathbf{F})\oversetF{\scriptscriptstyle{s_{d+1}\mathbf{F}}}{\circ} s_{d+1}M_{\MA}\\&+s_{d+1}M_{\MB}\uppersetF{\scriptscriptstyle{s_{d+1}\mathbf{F}}}{\circ}(s_{d+1}\mathbf{F}\upperset{\scriptscriptstyle{\multinec}}{\circ}s_{d+1}M_{\MA}-s_{d+1}M_{\MB}\upperset{\scriptscriptstyle{\pre}}{\circ} s_{d+1}\mathbf{F})
    \end{split}
\end{equation}
\end{small}
\end{corollary}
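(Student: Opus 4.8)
The plan is to mirror the proof of the corresponding corollary in the $A_\infty$ setting, reducing everything to the three associativity identities of Lemma \ref{lemma:identities-pcy} together with the hypothesis that $s_{d+1}M_{\MB}$ solves the Maurer-Cartan equation. First I would expand the right-hand side of \eqref{eq:second-identity-pcy}: since the upper composition $\oversetF{\scriptscriptstyle{s_{d+1}\mathbf{F}}}{\circ}$ is linear in its first argument and the lower composition $s_{d+1}M_{\MB}\uppersetF{\scriptscriptstyle{s_{d+1}\mathbf{F}}}{\circ}(-)$ is linear in its second argument, distributing over the two differences produces four terms, namely $(s_{d+1}\mathbf{F}\upperset{\scriptscriptstyle{\multinec}}{\circ}s_{d+1}M_{\MA})\oversetF{\scriptscriptstyle{s_{d+1}\mathbf{F}}}{\circ}s_{d+1}M_{\MA}$, the term $-(s_{d+1}M_{\MB}\upperset{\scriptscriptstyle{\pre}}{\circ}s_{d+1}\mathbf{F})\oversetF{\scriptscriptstyle{s_{d+1}\mathbf{F}}}{\circ}s_{d+1}M_{\MA}$, the term $s_{d+1}M_{\MB}\uppersetF{\scriptscriptstyle{s_{d+1}\mathbf{F}}}{\circ}(s_{d+1}\mathbf{F}\upperset{\scriptscriptstyle{\multinec}}{\circ}s_{d+1}M_{\MA})$, and finally $-s_{d+1}M_{\MB}\uppersetF{\scriptscriptstyle{s_{d+1}\mathbf{F}}}{\circ}(s_{d+1}M_{\MB}\upperset{\scriptscriptstyle{\pre}}{\circ}s_{d+1}\mathbf{F})$.

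Next I would match these terms against Lemma \ref{lemma:identities-pcy}. The first term equals the left-hand side $s_{d+1}\mathbf{F}\upperset{\scriptscriptstyle{\multinec}}{\circ}(s_{d+1}M_{\MA}\upperset{\scriptscriptstyle{\nec}}{\circ}s_{d+1}M_{\MA})$ directly by \eqref{eq:associativity-F-MA}. The second and third terms cancel, since \eqref{eq:associativity-F-MB} asserts precisely that $(s_{d+1}M_{\MB}\upperset{\scriptscriptstyle{\pre}}{\circ}s_{d+1}\mathbf{F})\oversetF{\scriptscriptstyle{s_{d+1}\mathbf{F}}}{\circ}s_{d+1}M_{\MA}=s_{d+1}M_{\MB}\uppersetF{\scriptscriptstyle{s_{d+1}\mathbf{F}}}{\circ}(s_{d+1}\mathbf{F}\upperset{\scriptscriptstyle{\multinec}}{\circ}s_{d+1}M_{\MA})$. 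Thus it only remains to show that the fourth term vanishes.

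For the fourth term I would establish the associativity identity $s_{d+1}M_{\MB}\uppersetF{\scriptscriptstyle{s_{d+1}\mathbf{F}}}{\circ}(s_{d+1}M_{\MB}\upperset{\scriptscriptstyle{\pre}}{\circ}s_{d+1}\mathbf{F})=(s_{d+1}M_{\MB}\upperset{\scriptscriptstyle{\nec}}{\circ}s_{d+1}M_{\MB})\uppersetF{\scriptscriptstyle{s_{d+1}\mathbf{F}}}{\circ}s_{d+1}\mathbf{F}$, which is the $\MB$-sided mirror of \eqref{eq:associativity-F-MB}, with both outer insertions now carried by $s_{d+1}M_{\MB}$; its proof is the same diagrammatic bookkeeping as in Lemma \ref{lemma:identities-pcy}, comparing the two families of filled diagrams of type $\doubar{x}$ and verifying that the Koszul signs agree. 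Once this is in hand, the hypothesis that $s_{d+1}M_{\MB}$ satisfies $(\operatorname{SI}^{\doubar{x}})$ for every $\doubar{x}\in\doubar{\MO}_{\MB}$ is equivalent to $[s_{d+1}M_{\MB},s_{d+1}M_{\MB}]_{\nec}=0$; because $s_{d+1}M_{\MB}$ has degree $1$ and $\operatorname{char}\kk\neq 2$, this forces $s_{d+1}M_{\MB}\upperset{\scriptscriptstyle{\nec}}{\circ}s_{d+1}M_{\MB}=0$, so the fourth term is $0$ and the corollary follows.

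I expect the main obstacle to be exactly this fourth associativity identity, i.e.\ confirming that $s_{d+1}M_{\MB}\uppersetF{\scriptscriptstyle{s_{d+1}\mathbf{F}}}{\circ}(s_{d+1}M_{\MB}\upperset{\scriptscriptstyle{\pre}}{\circ}s_{d+1}\mathbf{F})$ can be reorganized as $(s_{d+1}M_{\MB}\upperset{\scriptscriptstyle{\nec}}{\circ}s_{d+1}M_{\MB})\uppersetF{\scriptscriptstyle{s_{d+1}\mathbf{F}}}{\circ}s_{d+1}\mathbf{F}$ with the correct signs. As in the proof of \eqref{eq:associativity-MA}, the delicate point is that reordering the first outgoing arrows of the two discs filled with $s_{d+1}M_{\MB}$, together with the fact that $s_{d+1}M_{\MB}$ is of degree $1$, can introduce a stray sign of the kind that produced the $-\sum\mathcal{E}(\mathcalboondox{D_6})$ contribution there; but once the bijection between the two sets of diagrams is set up and the surviving configurations are identified, the verification is routine.
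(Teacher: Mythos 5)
Your proposal is correct and follows essentially the same route as the paper: after the expansion of the right-hand side into four terms handled by \eqref{eq:associativity-F-MA} and \eqref{eq:associativity-F-MB}, the paper likewise disposes of the remaining term $s_{d+1}M_{\MB}\uppersetF{\scriptscriptstyle{s_{d+1}\mathbf{F}}}{\circ}(s_{d+1}M_{\MB}\upperset{\scriptscriptstyle{\pre}}{\circ} s_{d+1}\mathbf{F})$ by regrouping its diagrams into the Maurer--Cartan expression of $s_{d+1}M_{\MB}$ composed with $s_{d+1}\mathbf{F}$, which vanishes because $s_{d+1}M_{\MB}$ satisfies the Stasheff identities. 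The only differences are cosmetic: you write the regrouped term as $(s_{d+1}M_{\MB}\upperset{\scriptscriptstyle{\nec}}{\circ}s_{d+1}M_{\MB})\uppersetF{\scriptscriptstyle{s_{d+1}\mathbf{F}}}{\circ}s_{d+1}\mathbf{F}$ where the paper writes $(s_{d+1}M_{\MB}\upperset{\scriptscriptstyle{\nec}}{\circ}s_{d+1}M_{\MB})\upperset{\scriptscriptstyle{\pre}}{\circ} s_{d+1}\mathbf{F}$, and your appeal to $\operatorname{char}\kk\neq 2$ is a harmless extra remark.
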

\begin{proof}
    It suffices to show that $s_{d+1}M_{\MB}\uppersetF{\scriptscriptstyle{s_{d+1}\mathbf{F}}}{\circ}(s_{d+1}M_{\MB}\upperset{\scriptscriptstyle{\pre}}{\circ} s_{d+1}\mathbf{F})=0$. 
    By definition, for every $\doubar{x}\in\doubar{\MO}_{\MA}$ \\ \vskip-3.6mm\noindent we have $(s_{d+1}M_{\MB}\uppersetF{\scriptscriptstyle{s_{d+1}\mathbf{F}}}{\circ}(s_{d+1}M_{\MB}\upperset{\scriptscriptstyle{\pre}}{\circ} s_{d+1}\mathbf{F}))^{\doubar{x}}=\sum\mathcal{E}(\mathcalboondox{D})$ where the sum is over all the filled diagrams \\ \vskip-3.6mm\noindent $\mathcalboondox{D}$ of type $\doubar{x}$ and of the form



\noindent 
Then, $s_{d+1}M_{\MB}\uppersetF{\scriptscriptstyle{s_{d+1}\mathbf{F}}}{\circ}(s_{d+1}M_{\MB}\upperset{\scriptscriptstyle{\pre}}{\circ} s_{d+1}\mathbf{F})=(s_{d+1}M_{\MB}\upperset{\scriptscriptstyle{G}}{\circ}s_{d+1}M_{\MB})\upperset{\scriptscriptstyle{\pre}}{\circ} s_{d+1}\mathbf{F}$ which vanishes since \\\vskip-3.8mm\noindent $s_{d+1}M_{\MB}$ satisfies the Stasheff identites by assumption.
\end{proof}

We now prove a version of the Homotopy Transfer Theorem for pre-Calabi-Yau categories, adapting the techniques of \cite{petersen} presented in Section \ref{A-inf-case}.

\begin{theorem}
\label{thm:2-pcy}
Let $(\MA,d_{\MA})$, $(\MB,d_{\MB})$ be dg quivers and $(F_0,f):\MA\rightarrow \MB$ be a quasi-isomorphism of dg quivers.
Let $s_{d+1}M_{\MB}\in\Multi_d^{\bullet}(\MB)^{C_{\llg(\bullet)}}[d+1]$ be an element of degree $1$, with $s_{d+1}M_{\MB}^{x,y}=d_{\MB}^{x,y}[1]$ for every $x,y\in\MO_{\MB}$, that turns $\MB$ into a $d$-pre-Calabi-Yau category.
Then, there exists a $d$-pre-Calabi-Yau structure $s_{d+1}M_{\MA}$ on $\MA$ such that $s_{d+1}M_{\MA}^{x,y}=d_{\MA}^{x,y}[1]$ for every $x,y\in\MO_{\MA}$ as well as a $d$-pre-Calabi-Yau morphism $(F_0,s_{d+1}\mathbf{F})$ such that $s_{d+1}F^{x,y}=f^{x,y}[1]$ for every $x,y\in\MO_{\MA}$. 
\end{theorem}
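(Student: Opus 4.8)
The plan is to replicate, essentially verbatim, the inductive argument proving Theorem \ref{thm2-Ainf}, replacing $C(\MA)$ by the invariant space $\Multi_d^{\bullet}(\MA)^{C_{\llg(\bullet)}}[d+1]$, the Gerstenhaber composition by the necklace product $\upperset{\scriptscriptstyle{\nec}}{\circ}$, and the morphism compositions by $\upperset{\scriptscriptstyle{\pre}}{\circ}$, $\upperset{\scriptscriptstyle{\multinec}}{\circ}$ together with the upper and lower compositions introduced before Lemma \ref{lemma:identities-pcy}. The associativity relation \eqref{eq:associativity-MA} and the mixed relation \eqref{eq:second-identity-pcy} will play exactly the roles that \eqref{eq:associativity-mA} and \eqref{eq:second-identity} played in the $A_{\infty}$ case. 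I initialize by setting $s_{d+1}M_{\MA}^{x,y}=d_{\MA}^{x,y}[1]$ and $s_{d+1}F^{x,y}=f^{x,y}[1]$. By Remark \ref{remark:pCY-struct-dg} the components with $\llg(\doubar{x})=1$ form the underlying $A_{\infty}$-data, and since $C_1$ is trivial there is no invariance constraint on them; hence these components are produced outright by Theorem \ref{thm2-Ainf}. I then induct over $\doubar{x}\in\doubar{\MO}_{\MA}$ ordered lexicographically by $(\llg(\doubar{x}),N(\doubar{x}))$, exactly the order used in Lemma \ref{lemma:iso-pCY}.

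For the inductive step, I assume $(\operatorname{SI}^{\doubar{y}})$ and $(\operatorname{MI}^{\doubar{y}})$ hold for all $\doubar{y}$ smaller than $\doubar{x}$. Evaluating \eqref{eq:associativity-MA} at $\doubar{x}$ and using that $(s_{d+1}M_{\MA}\upperset{\scriptscriptstyle{\nec}}{\circ}s_{d+1}M_{\MA})^{\doubar{y}}$ vanishes for smaller $\doubar{y}$ collapses the identity to the statement that $(s_{d+1}M_{\MA}\upperset{\scriptscriptstyle{\nec}}{\circ}s_{d+1}M_{\MA})^{\doubar{x}}$ commutes with $d_{\Multi}$, i.e. it is a cycle. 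Likewise, evaluating \eqref{eq:second-identity-pcy} at $\doubar{x}$ and using the vanishing of the morphism defect $(s_{d+1}\mathbf{F}\upperset{\scriptscriptstyle{\multinec}}{\circ}s_{d+1}M_{\MA}-s_{d+1}M_{\MB}\upperset{\scriptscriptstyle{\pre}}{\circ}s_{d+1}\mathbf{F})^{\doubar{y}}$ for smaller $\doubar{y}$ exhibits the image of this cycle under post-composition with the arity-one part of $s_{d+1}\mathbf{F}$ as a boundary. This is where the quasi-isomorphism enters: up to Künneth, that post-composition is the map induced on $\Homgr_{\kk}(\MA[1]^{\otimes\doubar{x}},-)$ by a tensor power of $f$ on the output legs, and since $\kk$ is a field, tensor powers of the quasi-isomorphism $f$ are quasi-isomorphisms and post-composition preserves quasi-isomorphisms. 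Hence $(s_{d+1}M_{\MA}\upperset{\scriptscriptstyle{\nec}}{\circ}s_{d+1}M_{\MA})^{\doubar{x}}$ is itself a boundary $\partial_{\MA}e$, and correcting $s_{d+1}M_{\MA}^{\doubar{x}}$ by $-e$ makes $(\operatorname{SI}^{\doubar{x}})$ hold. A second correction, splitting the resulting morphism defect as $f[1]$ applied to a cycle plus a boundary exactly as in the proof of Theorem \ref{thm2-Ainf}, produces $s_{d+1}F^{\doubar{x}}$ so that $(\operatorname{MI}^{\doubar{x}})$ holds without disturbing the Maurer--Cartan relation.

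The genuinely new difficulty, absent from the $A_{\infty}$ case, is that $s_{d+1}M_{\MA}$ and $s_{d+1}\mathbf{F}$ must lie in the cyclic-invariant subspaces, so every correction has to be chosen $C_{\llg(\bullet)}$-invariant. Since $d_{\Multi}$ and all the composition products are $C_{\llg(\bullet)}$-equivariant, the defect $(s_{d+1}M_{\MA}\upperset{\scriptscriptstyle{\nec}}{\circ}s_{d+1}M_{\MA})$ and the mixed expression above are invariant collections, so the task reduces to lifting an invariant cycle to an invariant chain, equivalently to checking that $f_{*}$ stays a quasi-isomorphism after passing to invariants. I would argue this orbit by orbit: on a free $C_n$-orbit the invariant subcomplex is canonically isomorphic to the single-component complex at a chosen representative $\doubar{x}$, so the lifting of the previous paragraph applies verbatim and extends uniquely and invariantly across the orbit; the remaining periodic orbits, whose nontrivial stabilizers are the delicate case, must be handled by a separate equivariant lifting using that $d_{\Multi}$ and $f_{*}$ are stabilizer-equivariant.

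I expect this invariance bookkeeping to be the main obstacle: pinning down the induction order so that the leading term of each relation genuinely isolates $d_{\Multi}(s_{d+1}M_{\MA}^{\doubar{x}})$ while simultaneously confining all corrections to the invariant subspace is the only place the argument departs from the $A_{\infty}$ template. Once Lemma \ref{lemma:identities-pcy} and its corollary are in hand, the cycle/boundary steps are formally identical to those of Theorem \ref{thm2-Ainf}, and $(F_0,s_{d+1}\mathbf{F})$ being a $d$-pre-Calabi-Yau morphism is precisely the family of identities $(\operatorname{MI}^{\doubar{x}})$ arranged in the induction, which completes the proof.
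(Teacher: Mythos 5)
Your proposal follows essentially the same route as the paper's proof: the same induction over $\doubar{x}\in\doubar{\MO}_{\MA}$ ordered by $(\llg(\doubar{x}),N(\doubar{x}))$ with the length-one components supplied by Theorem \ref{thm2-Ainf}, the same use of \eqref{eq:associativity-MA} to show that $(s_{d+1}M_{\MA}\upperset{\scriptscriptstyle{\nec}}{\circ}s_{d+1}M_{\MA})^{\doubar{x}}$ is a cycle and of \eqref{eq:second-identity-pcy} together with the quasi-isomorphism $f$ (via its tensor powers on the output legs) to conclude it is a boundary, followed by the same two-step correction of $s_{d+1}M_{\MA}^{\doubar{x}}$ and then of $s_{d+1}F^{\doubar{x}}$. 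The only point of divergence is your orbit-by-orbit treatment of cyclic invariance: the paper dispatches this in one sentence, observing that the defect $s_{d+1}M_{\MA}\upperset{\scriptscriptstyle{\nec}}{\circ}s_{d+1}M_{\MA}$ is itself $C_{\llg(\bullet)}$-invariant so the primitive $E$ may be taken invariant, which is compatible with (and no more detailed than) your bookkeeping.
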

\begin{proof}
    We construct the $d$-pre-Calabi-Yau structure and the $d$-pre-Calabi-Yau morphism inductively.
    We already have the maps for $\bar{x}\in\bar{\MO}_{\MA}$ by Theorem \ref{thm2-Ainf}. Consider $\doubar{x}=(\bar{x}^1,\dots,\bar{x}^n)\in\bar{\MO}^n_{\MA}$ with $n>1$ and suppose that we have constructed $s_{d+1}M_{\MA}\in\Multi_d^{\bullet}(\MA)^{C_{\llg(\bullet)}}[d+1]$ of degree $1$ and $s_{d+1}\mathbf{F}\in\Multi_d^{\bullet}(\MA,\MB)^{C_{\llg(\bullet)}}[d+1]$ of degree $0$ such that the identities $(\operatorname{SI}^{\doubar{y}})$ and $(\operatorname{MI}^{\doubar{y}})$ are satisfied for every $\doubar{y}\in\doubar{\MO}_{\MA}$ such that $\llg(\doubar{y})<\llg(\doubar{x})$ or $\llg(\doubar{y})=\llg(\doubar{x})$ and $N(\doubar{y})<N(\doubar{x})$.
    Since $(s_{d+1}M_{\MA}\upperset{\scriptscriptstyle{\nec}}{\circ} s_{d+1}M_{\MA})^{\doubar{y}}$ vanishes for $\doubar{y}\in\doubar{\MO}$ such that $\llg(\doubar{y})<\llg(\doubar{x})$ or $\llg(\doubar{y})=\llg(\doubar{x})$ and $N(\doubar{y})<N(\doubar{x})$, \\\vskip-3.8mm\noindent the term indexed by $\doubar{x}$ in the identity \eqref{eq:associativity-MA} is simply
    
    \[d_{{}_{\llt(\bar{x}^1)}\MA_{\rrt(\bar{x}^2)}[-d]\otimes\dots\otimes {}_{\llt(\bar{x}^n)}\MA_{\rrt(\bar{x}^1)}[1]}\circ (s_{d+1}M_{\MA}\upperset{\scriptscriptstyle{\nec}}{\circ} s_{d+1}M_{\MA})^{\doubar{x}}=(s_{d+1}M_{\MA}\upperset{\scriptscriptstyle{\nec}}{\circ} s_{d+1}M_{\MA})^{\doubar{x}}\circ d_{\MA[1]^{\otimes \doubar{x}}}\]
   and we thus have that $\partial_{\MA}(s_{d+1}M_{\MA}\upperset{\scriptscriptstyle{\nec}}{\circ} s_{d+1}M_{\MA})^{\doubar{x}}=0$, where we have denoted by $\partial_{\MA}$ the differential \\ \vskip-3.8mm \noindent of $\Multi_d^{\bullet}(\MA)[d+1]$, \textit{i.e.} $(s_{d+1}M_{\MA}\upperset{\scriptscriptstyle{\nec}}{\circ} s_{d+1}M_{\MA})^{\doubar{x}}$ is a cycle. \\ \vskip-3.8mm\noindent

    Moreover, the term indexed by $\doubar{x}$ of the identity \eqref{eq:second-identity-pcy} is 
   \begin{equation}
   \begin{split}
    (f^{\llt(\bar{x}^1),\rrt(\bar{x}^2)}[-d]\otimes\dots\otimes& f^{\llt(\bar{x}^n),\rrt(\bar{x}^1)}[1])\circ(s_{d+1}M_{\MA}\upperset{\scriptscriptstyle{\nec}}{\circ}s_{d+1}M_{\MA})^{\doubar{x}}\\&=(s_{d+1}\mathbf{F}\upperset{\scriptscriptstyle{\multinec}}{\circ}s_{d+1}M_{\MA}-s_{d+1}M_{\MB}\upperset{\scriptscriptstyle{\pre}}{\circ} s_{d+1}\mathbf{F})^{\doubar{x}}\circ d_{A[1]^{\otimes \doubar{x}}}
    \\&\phantom{=}+d_{A[-d]^{\otimes (k-1)}\otimes{A[1]}}\circ(s_{d+1}\mathbf{F}\upperset{\scriptscriptstyle{\multinec}}{\circ}s_{d+1}M_{\MA}-s_{d+1}M_{\MB}\upperset{\scriptscriptstyle{\pre}}{\circ} s_{d+1}\mathbf{F})^{\doubar{x}}
     \end{split}
\end{equation}
since $(s_{d+1}\mathbf{F}\upperset{\scriptscriptstyle{\multinec}}{\circ}s_{d+1}M_{\MA}-s_{d+1}M_{\MB}\upperset{\scriptscriptstyle{\pre}}{\circ} s_{d+1}\mathbf{F})^{\doubar{y}}$ vanishes for $\doubar{y}\in\doubar{\MO}_{\MA}$ such that $\llg(\doubar{y})<\llg(\doubar{x})$ or \\\vskip-3.8mm\noindent $\llg(\doubar{y})=\llg(\doubar{x})$ and $N(\doubar{y})<N(\doubar{x})$.\\\vskip-3.8mm\noindent

Then, $(f^{\llt(\bar{x}^1),\rrt(\bar{x}^2)}[-d]\otimes\dots\otimes f^{\llt(\bar{x}^n),\rrt(\bar{x}^1)}[1])\circ(s_{d+1}M_{\MA}\upperset{\scriptscriptstyle{\nec}}{\circ}s_{d+1}M_{\MA})^{\doubar{x}}$ is a boundary, and since \\\vskip-3.8mm\noindent $(s_{d+1}M_{\MA}\upperset{\scriptscriptstyle{\nec}}{\circ}s_{d+1}M_{\MA})^{\doubar{x}}$ is a cycle and $f$ is a quasi-isomorphism, $(s_{d+1}M_{\MA}\upperset{\scriptscriptstyle{\nec}}{\circ}s_{d+1}M_{\MA})^{\doubar{x}}$ is itself a \\\vskip-3.8mm\noindent boundary, 
meaning that there exists $E^{\doubar{x}}\in \Multi_d^{\doubar{x}}(\MA)$ such that $(s_{d+1}M_{\MA}\upperset{\scriptscriptstyle{\nec}}{\circ}s_{d+1}M_{\MA})^{\doubar{x}}=\partial_{\MA}(s_{d+1}E^{\doubar{x}})$. \\\vskip-3.8mm\noindent
We thus define $\mu_{\MA}^{\doubar{y}}=M_{\MA}^{\doubar{y}}$ for $\doubar{y}\in\doubar{\MO}$ such that $\llg(\doubar{y})<\llg(\doubar{x})$ or $\llg(\doubar{y})=\llg(\doubar{x})$ and $N(\doubar{y})<N(\doubar{x})$ and $\mu_{\MA}^{\doubar{x}}=M_{\MA}^{\doubar{x}}-E$. 
Then, $(s_{d+1}\mu_A\upperset{\scriptscriptstyle{\nec}}{\circ}s_{d+1}\mu_A)^{\doubar{y}}=0$ for $\doubar{y}\in\doubar{\MO}$ such that $\llg(\doubar{y})<\llg(\doubar{x})$ or $\llg(\doubar{y})=\llg(\doubar{x})$ \\\vskip-3.8mm\noindent and $N(\doubar{y})\leq N(\doubar{x})$. It is clear that $\mu_{\MA}$ is invariant under the action of the cyclic group since $M_{\MA}$ is and the element $E=(E^{\doubar{x}})_{\doubar{x}\in\doubar{\MO}_{\MA}}\in\Multi_d^{\bullet}(\MA)$ is invariant since $s_{d+1}M_{\MA}\upperset{\scriptscriptstyle{\nec}}{\circ}s_{d+1}M_{\MA}$ is.

Moreover, by \eqref{eq:second-identity} we have that 
$(s_{d+1}\mathbf{F}\upperset{\scriptscriptstyle{\multinec}}{\circ}s_{d+1}M_{\MA} -s_{d+1}M_{\MB}\upperset{\scriptscriptstyle{\pre}}{\circ}s_{d+1}\mathbf{F})^{\doubar{x}}$ is a cycle so it \\\vskip-4mm\noindent can be written as 
\begin{small}
    \begin{equation}
        (s_{d+1}\mathbf{F}\upperset{\scriptscriptstyle{\multinec}}{\circ}s_{d+1}M_{\MA} -s_{d+1}M_{\MB}\upperset{\scriptscriptstyle{\pre}}{\circ}s_{d+1}\mathbf{F})^{\doubar{x}}=(f^{\llt(\bar{x}^1),\rrt(\bar{x}^2)}[-d]\otimes\dots\otimes f^{\llt(\bar{x}^n),\rrt(\bar{x}^1)}[1])\circ s_{d+1}E'+ \partial (s_{d+1}E\dprime)
    \end{equation}
\end{small} where $E'$ is a cycle of $\Multi_d^{\doubar{x}}(\MA)$, $E\dprime\in \Multi_d^{\doubar{x}}(\MA,\MB)$ and $\partial$ denotes the differential of $\Multi_d^{\bullet}(\MA,\MB)[d+1]$.

We set $\nu_{\MA}^{\doubar{y}}=\mu_{\MA}^{\bar{i}}$, $\mathbf{G}^{\doubar{y}}=\mathbf{F}^{\doubar{y}}$ for $\doubar{y}\in\doubar{\MO}$ such that $\llg(\doubar{y})<\llg(\doubar{x})$ or $\llg(\doubar{y})=\llg(\doubar{x})$ and $N(\doubar{y})<N(\doubar{x})$ and set $\nu_{\MA}^{\doubar{x}}=\nu_{\doubar{x}}^{\doubar{x}}-E'$, $\mathbf{G}^{\doubar{x}}=\mathbf{F}^{\doubar{x}}-E\dprime$.
Therefore, since we have only modified $\mu_A$ by adding a cycle, we have $(s_{d+1}\nu_{\MA}\upperset{\nec}{\circ}s_{d+1}\nu_{\MA})^{\doubar{y}}=0$ for $\doubar{y}\in\doubar{\MO}_{\MA}$ such that $\llg(\doubar{y})<\llg(\doubar{x})$ or $\llg(\doubar{y})=\llg(\doubar{x})$ and
\\\vskip-3.8mm\noindent $N(\doubar{y})\leq N(\doubar{x})$.
It is straightforward to check that $(s_{d+1}\mathbf{G}\upperset{\scriptscriptstyle{\multinec}}{\circ}s_{d+1}\nu_A-s_{d+1}M_{\MB}\upperset{\scriptscriptstyle{\pre}}{\circ}s_{d+1}\mathbf{G})^{\doubar{x}}=0$.
\end{proof}

\begin{theorem}
\label{thm:3-pcy}
Let $(\MA,d_{\MA})$, $(\MB,d_{\MB})$ be dg quivers and $(F_0,f):\MA\rightarrow \MB$ be a quasi-isomorphism of dg quivers.
Let $s_{d+1}M_{\MA}\in\Multi_d^{\bullet}(\MA)^{C_{\llg(\bullet)}}[d+1]$ be an element of degree $1$, with $s_{d+1}M_{\MA}^{x,y}=d_{\MA}^{x,y}[1]$ for every $x,y\in\MO_{\MA}$, that turns $\MA$ into a $d$-pre-Calabi-Yau category.
Then, there exists a $d$-pre-Calabi-Yau structure $s_{d+1}M_{\MB}$ on $\MB$ such that $s_{d+1}M_{\MB}^{x,y}=d_{\MB}^{x,y}[1]$ as well as a $d$-pre-Calabi-Yau morphism $(F_0,s_{d+1}\mathbf{F})$ such that $s_{d+1}F^{x,y}=f^{x,y}[1]$ for every $x,y\in\MO_{\MB}$. 
\end{theorem}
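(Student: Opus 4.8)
The plan is to mirror the proof of Theorem~\ref{thm:2-pcy}, interchanging the roles of $\MA$ and $\MB$ and, crucially, replacing \emph{post}-composition with the quasi-isomorphism $f$ by \emph{pre}-composition with it. I would build $s_{d+1}M_{\MB}$ and $s_{d+1}\mathbf{F}$ inductively over $\doubar{x}\in\doubar{\MO}_{\MB}$, ordered lexicographically by $(\llg(\doubar{x}),N(\doubar{x}))$. The base case $\llg(\doubar{x})=1$, i.e. the arity-one tuples $\bar{x}\in\bar{\MO}_{\MB}$, is precisely the construction of an $A_{\infty}$-structure on $\MB$ together with an $A_{\infty}$-morphism extending $f$, which is furnished by Theorem~\ref{thm3-Ainf} (compare Remark~\ref{remark:pCY-struct-dg}); there I set $s_{d+1}M_{\MB}^{x,y}=d_{\MB}^{x,y}[1]$ and $s_{d+1}F^{x,y}=f^{x,y}[1]$. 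At each inductive step the two obstructions — the Maurer--Cartan defect $(s_{d+1}M_{\MB}\upperset{\scriptscriptstyle{\nec}}{\circ}s_{d+1}M_{\MB})^{\doubar{x}}$ and the morphism defect $D^{\doubar{x}}$, where $D=s_{d+1}\mathbf{F}\upperset{\scriptscriptstyle{\multinec}}{\circ}s_{d+1}M_{\MA}-s_{d+1}M_{\MB}\upperset{\scriptscriptstyle{\pre}}{\circ}s_{d+1}\mathbf{F}$ — must be absorbed by adjusting $s_{d+1}M_{\MB}^{\doubar{x}}$ and $s_{d+1}F^{\doubar{x}}$.

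The algebraic engine is the \emph{unconditional} identity underlying the Corollary preceding Theorem~\ref{thm:2-pcy}: combining \eqref{eq:associativity-F-MA} and \eqref{eq:associativity-F-MB} exactly as in that proof, but without discarding any term, yields
\[
s_{d+1}\mathbf{F}\upperset{\scriptscriptstyle{\multinec}}{\circ}(s_{d+1}M_{\MA}\upperset{\scriptscriptstyle{\nec}}{\circ}s_{d+1}M_{\MA})-(s_{d+1}M_{\MB}\upperset{\scriptscriptstyle{\nec}}{\circ}s_{d+1}M_{\MB})\upperset{\scriptscriptstyle{\pre}}{\circ}s_{d+1}\mathbf{F}=D\oversetF{\scriptscriptstyle{s_{d+1}\mathbf{F}}}{\circ}s_{d+1}M_{\MA}+s_{d+1}M_{\MB}\uppersetF{\scriptscriptstyle{s_{d+1}\mathbf{F}}}{\circ}D.
\]
In Theorem~\ref{thm:2-pcy} one drops the last term on the left via the Stasheff identities for $s_{d+1}M_{\MB}$; here, instead, $s_{d+1}M_{\MA}$ is the given structure, so $s_{d+1}M_{\MA}\upperset{\scriptscriptstyle{\nec}}{\circ}s_{d+1}M_{\MA}=0$ by \eqref{eq:MC} and the identity specialises to
\[
(s_{d+1}M_{\MB}\upperset{\scriptscriptstyle{\nec}}{\circ}s_{d+1}M_{\MB})\upperset{\scriptscriptstyle{\pre}}{\circ}s_{d+1}\mathbf{F}=-\Big(D\oversetF{\scriptscriptstyle{s_{d+1}\mathbf{F}}}{\circ}s_{d+1}M_{\MA}+s_{d+1}M_{\MB}\uppersetF{\scriptscriptstyle{s_{d+1}\mathbf{F}}}{\circ}D\Big),
\]
the exact counterpart of \eqref{eq:second-identity-pcy} with the two Maurer--Cartan equations exchanged.

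The inductive step now runs parallel to Theorem~\ref{thm:2-pcy}. First, evaluating \eqref{eq:associativity-MA} for $s_{d+1}M_{\MB}$ at $\doubar{x}$ and using that $(s_{d+1}M_{\MB}\upperset{\scriptscriptstyle{\nec}}{\circ}s_{d+1}M_{\MB})^{\doubar{y}}=0$ for all smaller $\doubar{y}$ gives $\partial_{\MB}(s_{d+1}M_{\MB}\upperset{\scriptscriptstyle{\nec}}{\circ}s_{d+1}M_{\MB})^{\doubar{x}}=0$, so the defect is a cycle of $\Multi_d^{\doubar{x}}(\MB)$. Second, I evaluate the specialised identity at the tuple $\doubar{z}\in\doubar{\MO}_{\MA}$ with $F_0(\doubar{z})=\doubar{x}$: the left-hand side collapses to $(s_{d+1}M_{\MB}\upperset{\scriptscriptstyle{\nec}}{\circ}s_{d+1}M_{\MB})^{\doubar{x}}$ \emph{pre}-composed with the tensor product of the lowest components $f[1]$ of $\mathbf{F}$ (all higher terms involve a strictly smaller index of $s_{d+1}M_{\MB}\upperset{\scriptscriptstyle{\nec}}{\circ}s_{d+1}M_{\MB}$, hence vanish), while on the right the vanishing of $D^{\doubar{y}}$ for smaller $\doubar{y}$ leaves only the terms where $D^{\doubar{x}}$ is acted on by the differential-leading parts of $s_{d+1}M_{\MA}$ and $s_{d+1}M_{\MB}$, i.e. $\pm\,\partial(D^{\doubar{x}})$. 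Thus the cycle $(s_{d+1}M_{\MB}\upperset{\scriptscriptstyle{\nec}}{\circ}s_{d+1}M_{\MB})^{\doubar{x}}$ becomes a boundary after pre-composition with $f^{\otimes}$; since $\kk$ is a field and $f$ a quasi-isomorphism, pre-composition with the quasi-isomorphism $f^{\otimes}\colon\bigotimes_i\MA[1]^{\otimes\bar{z}^i}\to\bigotimes_i\MB[1]^{\otimes\bar{x}^i}$ induces a quasi-isomorphism on the relevant mapping complexes, so $(s_{d+1}M_{\MB}\upperset{\scriptscriptstyle{\nec}}{\circ}s_{d+1}M_{\MB})^{\doubar{x}}$ is itself a boundary $\partial_{\MB}(s_{d+1}E^{\doubar{x}})$. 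Replacing $M_{\MB}^{\doubar{x}}$ by $M_{\MB}^{\doubar{x}}-E^{\doubar{x}}$ kills this obstruction while preserving the earlier identities and cyclic invariance. After this correction $\mu_{\MB}$ satisfies the Maurer--Cartan equation up to order $\doubar{x}$, so \eqref{eq:second-identity-pcy} now applies and shows $D^{\doubar{x}}$ is a cycle of the mapping complex; using the same pre-composition quasi-isomorphism I write $D^{\doubar{x}}=E'\circ f^{\otimes}+\partial E''$ with $E'$ a \emph{cycle} of $\Multi_d^{\doubar{x}}(\MB)$ and $E''$ in the mapping complex, and correcting $\mu_{\MB}^{\doubar{x}}$ by $E'$ (which does not reintroduce a Maurer--Cartan obstruction) and $F^{\doubar{x}}$ by $E''$ makes $D^{\doubar{x}}$ vanish.

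The main obstacle, and the one genuinely new feature compared with Theorem~\ref{thm:2-pcy}, is the reversal of direction in the homological algebra: there the obstruction appears post-composed with $f$ on the \emph{output} and one invokes that $\Homgr_{\kk}(X,-)$ preserves quasi-isomorphisms, whereas here it appears pre-composed with $f$ on the \emph{input}, so the argument rests on $\Homgr_{\kk}(-,Y)$ carrying the quasi-isomorphism $f^{\otimes}$ of inputs to a quasi-isomorphism of mapping complexes — which is exactly where the hypothesis that $\kk$ is a field is used. The remaining bookkeeping — verifying that evaluating the specialised identity at $\doubar{z}$ truly isolates $\pm\,\partial(D^{\doubar{x}})$, and that the primitives $E^{\doubar{x}},E',E''$ may be taken $C_{\llg(\doubar{x})}$-invariant because the corresponding defects are — is identical to the forward direction and I expect it to transfer without change.
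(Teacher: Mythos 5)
Your proposal is correct and takes essentially the same route as the paper's proof: an induction ordered by $(\llg,N)$ with base case supplied by Theorem \ref{thm3-Ainf}, the specialised identity obtained from the associativity lemmas together with $s_{d+1}M_{\MA}\upperset{\scriptscriptstyle{\nec}}{\circ}s_{d+1}M_{\MA}=0$ (the paper's \eqref{eq:thm3-pCY}, which coincides with yours up to writing $-D$ for $D$), and the key homological step that pre-composition with $f^{\otimes}$ is a quasi-isomorphism of hom-complexes, allowing both defects to be killed by primitives $E$, $E'$, $E''$. The only cosmetic difference is that the paper runs the induction over tuples $\doubar{x}\in\doubar{\MO}_{\MA}$ and writes the $\MB$-side indices as $\doubarl{F_0(\doubar{x})}$, whereas you index by tuples of $\MO_{\MB}$ and pull back along the bijection $F_0$.
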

\begin{proof}
    We construct the $d$-pre-Calabi-Yau structure and the $d$-pre-Calabi-Yau morphism inductively.
    We already have the maps for $\bar{x}\in\bar{\MO}_{\MA}$ by Theorem \ref{thm3-Ainf}. Consider $\doubar{x}=(\bar{x}^1,\dots,\bar{x}^n)\in\bar{\MO}^n_{\MA}$ with $n>1$. Given $\bar{y}=(y_1,\dots,y_n)\in\MO^n_{\MB}$ we denote $\xoverline{F_0(\bar{y})}=(F_0(y_1),\dots,F_0(y_n))$ and given $\doubar{y}=(\bar{y}^1,\dots,\bar{y}^n)\in\bar{\MO}^n_{\MA}$, we denote $\doubarl{F_0(\doubar{y})}=(\xoverline{F_0(\bar{y}^1)},\dots,\xoverline{F_0(\bar{y}^n)})$.
    
    Suppose that we have constructed an element $s_{d+1}M_{\MB}\in\Multi_d^{\bullet}(\MB)^{C_{\llg(\bullet)}}[d+1]$ of degree $1$ and $s_{d+1}\mathbf{F}\in\Multi_d^{\bullet}(\MA,\MB)^{C_{\llg(\bullet)}}[d+1]$ such that the identities $(\operatorname{SI}^{\doubarl{\scriptscriptstyle{F_0(\doubar{y})}}})$ and $(\operatorname{MI}^{\doubar{y}})$ are satisfied for every $\doubar{y}\in\doubar{\MO}_{\MA}$ such that $\llg(\doubar{y})<\llg(\doubar{x})$ or $\llg(\doubar{y})=\llg(\doubar{x})$ and $N(\doubar{y})<N(\doubar{x})$.
   
    The term indexed by $\doubarl{F_0(\doubar{x})}$ of the identity \eqref{eq:associativity-MA} applied with $s_{d+1}M_{\MB}$ shows as before that the element $(s_{d+1}M_{\MB}\upperset{\scriptscriptstyle{\nec}}{\circ} s_{d+1}M_{\MB})^{\doubarl{\scriptscriptstyle{F_0(\doubar{x})}}}$ is a cycle.
    
 We now consider the following identity.
 \begin{small}
    \begin{equation}
        \label{eq:thm3-pCY}
        \begin{split}
        (s_{d+1}M_{\MB}\upperset{\scriptscriptstyle{\nec}}{\circ}s_{d+1}M_{\MB})\upperset{\scriptscriptstyle{\pre}}{\circ}s_{d+1}\mathbf{F}=&(s_{d+1}M_{\MB}\upperset{\scriptscriptstyle{\pre}}{\circ}s_{d+1}\mathbf{F}-s_{d+1}\mathbf{F}\upperset{\scriptscriptstyle{\multinec}}{\circ} s_{d+1}M_{\MA})\oversetF{\scriptscriptstyle{s_{d+1}F}}{\circ}s_{d+1}M_{\MA}
        \phantom{=}\\&+s_{d+1}M_{\MB}\uppersetF{\scriptscriptstyle{s_{d+1}\mathbf{F}}}{\circ}(s_{d+1}M_{\MB}\upperset{\scriptscriptstyle{\pre}}{\circ}s_{d+1}\mathbf{F}-s_{d+1}\mathbf{F}\upperset{\scriptscriptstyle{\multinec}}{\circ}s_{d+1}M_{\MA})
          \end{split}
    \end{equation}
    \end{small}
    Its term indexed by $\doubar{x}$ is 
   \begin{equation}
   \begin{split}
    &(s_{d+1}M_{\MB}\upperset{\scriptscriptstyle{\nec}}{\circ}s_{d+1}M_{\MB})^{\doubarl{\scriptscriptstyle{F_0(\doubar{x})}}}\circ (f^{\rrt(\bar{x}^2),\llt(\bar{x}^1)}[1]\otimes\dots\otimes f^{\rrt(\bar{x}^1),\llt(\bar{x}^n)}[1])\\&=(s_{d+1}M_{\MB}\upperset{\scriptscriptstyle{\pre}}{\circ} s_{d+1}\mathbf{F}-s_{d+1}\mathbf{F}\upperset{\scriptscriptstyle{\multinec}}{\circ}s_{d+1}M_{\MA})^{\doubar{x}}\circ d_{A[1]^{\otimes \doubar{x}}}
    \\&\phantom{=}+d_{{}_{\llt(\bar{x}^1)}\MA_{\rrt(\bar{x}^2)}[-d]\otimes\dots\otimes {}_{\llt(\bar{x}^n)}\MA_{\rrt(\bar{x}^1)}[1]}\circ(s_{d+1}M_{\MB}\upperset{\scriptscriptstyle{\pre}}{\circ} s_{d+1}\mathbf{F}-s_{d+1}\mathbf{F}\upperset{\scriptscriptstyle{\multinec}}{\circ}s_{d+1}M_{\MA})^{\doubar{x}}
     \end{split}
\end{equation}
since $(s_{d+1}M_{\MB}\upperset{\scriptscriptstyle{\pre}}{\circ} s_{d+1}\mathbf{F}-s_{d+1}\mathbf{F}\upperset{\scriptscriptstyle{\multinec}}{\circ}s_{d+1}M_{\MA})^{\doubar{y}}$ vanishes for every $\doubar{y}\in\doubar{\MO}_{\MA}$ such that $\llg(\doubar{y})<\llg(\doubar{x})$ \\\vskip-3.8mm\noindent or $\llg(\doubar{y})=\llg(\doubar{x})$ and $N(\doubar{y})<N(\doubar{x})$.

Then, \[(s_{d+1}M_{\MB}\upperset{\scriptscriptstyle{\nec}}{\circ}s_{d+1}M_{\MB})^{\doubarl{\scriptscriptstyle{F_0(\doubar{x})}}}\circ (f^{\rrt(\bar{x}^2),\llt(\bar{x}^1)}[1]\otimes\dots\otimes f^{\rrt(\bar{x}^1),\llt(\bar{x}^n)}[1])
\]
is a boundary so there exists $E\in \Multi_d^{\doubarl{\scriptscriptstyle{F_0(\doubar{x})}}}(B)$ such that $(s_{d+1}M_{\MB}\upperset{\scriptscriptstyle{\nec}}{\circ}s_{d+1}M_{\MB})^{\doubarl{\scriptscriptstyle{F_0(\doubar{x})}}}=\partial (s_{d+1}E)$. \\\vskip-3.8mm\noindent
We thus define $\mu_{\MB}^{\doubarl{\scriptscriptstyle{F_0(\doubar{y})}}}=M_{\MB}^{\doubarl{\scriptscriptstyle{F_0(\doubar{y})}}}$ for every $\doubar{y}\in\doubar{\MO}_{\MA}$ such that $\llg(\doubar{y})<\llg(\doubar{x})$ or $\llg(\doubar{y})=\llg(\doubar{x})$ and \\\vskip-3.8mm\noindent $N(\doubar{y})<N(\doubar{x})$ and $\mu_{\MB}^{\doubarl{\scriptscriptstyle{F_0(\doubar{x})}}}=M_{\MB}^{\doubarl{\scriptscriptstyle{F_0(\doubar{x})}}}-E$. 
Then, $(s_{d+1}\mu_{\MB}\upperset{\scriptscriptstyle{\nec}}{\circ}s_{d+1}\mu_{\MB})^{\doubarl{\scriptscriptstyle{F_0(\doubar{y})}}}=0$ for every $\doubar{y}\in\doubar{\MO}_{\MA}$ such that \\\vskip-3.8mm\noindent $\llg(\doubar{y})<\llg(\doubar{x})$ or $\llg(\doubar{y})=\llg(\doubar{x})$ and $N(\doubar{y})\leq N(\doubar{x})$.

Moreover, by \eqref{eq:thm3-pCY} we have that 
$(s_{d+1}M_{\MB}\upperset{\scriptscriptstyle{\pre}}{\circ}s_{d+1}\mathbf{F}-s_{d+1}\mathbf{F}\upperset{\scriptscriptstyle{\multinec}}{\circ}s_{d+1}M_{\MA})^{\doubar{x}}$ is a cycle so it can \\\vskip-3.8mm\noindent be written as \begin{small}
    \begin{equation}
    (s_{d+1}M_{\MB}\upperset{\scriptscriptstyle{\pre}}{\circ}s_{d+1}\mathbf{F}-s_{d+1}\mathbf{F}\upperset{\scriptscriptstyle{\multinec}}{\circ}s_{d+1}M_{\MA})^{\doubar{x}}= s_{d+1}E'\circ (f^{\rrt(\bar{x}^2),\llt(\bar{x}^1)}[1]\otimes\dots\otimes f^{\rrt(\bar{x}^1),\llt(\bar{x}^n)}[1])+ \partial (s_{d+1}E\dprime)
    \end{equation}
\end{small}where $E'$ is a cycle of $\Multi_d^{\doubarl{\scriptscriptstyle{F_0(\doubar{x})}}}(\MB)$ and $E\dprime\in \Multi_d^{\doubar{x}}(\MA,\MB)$.

We set $\nu_{\MB}^{\doubar{y}}=\mu_{\MB}^{\doubar{y}}$, $G^{\doubar{y}}=F^{\bar{y}}$ for every $\doubar{y}\in\doubar{\MO}_{\MA}$ such that $\llg(\doubar{y})<\llg(\doubar{x})$ or $\llg(\doubar{y})=\llg(\doubar{x})$ and $N(\doubar{y})< N(\doubar{x})$, $\nu_{\MB}^{\doubarl{\scriptscriptstyle{F_0(\doubar{x})}}}=\nu_{\MB}^{\doubarl{\scriptscriptstyle{F_0(\doubar{x})}}}-E'$ and $G^{\doubar{x}}=F^{\doubar{x}}-E\dprime$.
It is straightforward to check that $s_{d+1}\nu_{\MB}$ is
\\\vskip-4mm\noindent invariant and satisfies the identities $(\operatorname{SI}^{\doubarl{\scriptscriptstyle{F_0(\doubar{y})}}})$ and that $s_{d+1}\mathbf{G}$ satisfies the identity $(\operatorname{MI}^{\doubar{y}})$ for every $\doubar{y}\in\doubar{\MO}_{\MA}$ such that $\llg(\doubar{y})<\llg(\doubar{x})$ or $\llg(\doubar{y})=\llg(\doubar{x})$ and $N(\doubar{y})\leq N(\doubar{x})$.
\end{proof}

\subsection{Minimal models and quasi-isomorphisms of pre-Calabi-Yau categories}
We now remark that the previous results leads to the existence of minimal models for pre-Calabi-Yau categories and prove that any quasi-isomorphism of $d$-pre-Calabi-Yau categories admits a quasi-inverse. 
\begin{definition}
    A $d$-pre-Calabi-Yau category $(\MB,s_{d+1}M_{\MB})$ is 
    \textbf{\textcolor{ultramarine}{minimal}} if $M_{\MB}^{x,y} = 0$ for every $x,y\in\MO_{\MB}$. 
    Given a $d$-pre-Calabi-Yau category $(\MA,s_{d+1}M_{\MA})$, a \textbf{\textcolor{ultramarine}{minimal model of $\MA$}} is a minimal $d$-pre-Calabi-Yau category $(\MB,s_{d+1}M_{\MB})$ together with a quasi-isomorphism $(P_0,s_{d+1}\mathbf{P}) : \MA \rightarrow \MB$ of $d$-pre-Calabi-Yau categories. 
\end{definition}

As for the case of $A_{\infty}$-categories we deduce from the previous results that minimal models of pre-Calabi-Yau categories always exist. 
Indeed, given a $d$-pre-Calabi-Yau category $(\MA,s_{d+1}M_{\MA})$,  
let $(P_0,p) : \MA\rightarrow H(\MA)$ be any quasi-isomorphism of dg quivers. 
By Theorem \ref{thm:3-pcy}, there exists a $d$-pre-Calabi-Yau structure on $H(\MA)$ and a morphism of $d$-pre-Calabi-Yau categories $(P_0,s_{d+1}\mathbf{P}) : \MA \rightarrow H(\MA)$ such that $s_{d+1}P^{x,y} = p^{x,y}[1]$ is a quasi-isomorphism for every $x,y\in\MO_{\MA}$.

\begin{lemma}
    \label{lemma:pCY-structure-cohomology}
         Let $(\MA,s_{d+1}M_{\MA})$ be a $d$-pre-Calabi-Yau category. Given quasi-isomorphisms of dg quivers $(Id,p) : \MA\rightarrow H(\MA)$ and $(Id,i) : H(\MA)\rightarrow \MA$, the $d$-pre-Calabi-Yau structures $s_{d+1}M_{i}$ and $s_{d+1}M_{p}$ on $H(\MA)$ given in Theorem \ref{thm:2-pcy} and \ref{thm:3-pcy} are isomorphic. In particular, given quasi-isomorphisms $(Id,p),(Id,p') : \MA\rightarrow H(\MA)$ (resp. $(Id,i),(Id,i') : H(\MA)\rightarrow \MA$), the $d$-pre-Calabi-Yau structures $s_{d+1}M_{p}$ and $s_{d+1}M_{p'}$ (resp. $s_{d+1}M_i$ and $s_{d+1}M_{i'}$) on $H(\MA)$ given in Theorem \ref{thm:3-pcy} (resp. in Theorem \ref{thm:2-pcy}) are isomorphic.
\end{lemma}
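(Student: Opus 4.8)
The plan is to exhibit the desired isomorphism between $s_{d+1}M_i$ and $s_{d+1}M_p$ as a composite of the two transferred pre-Calabi-Yau morphisms and then apply Lemma \ref{lemma:iso-pCY}. First I would apply Theorem \ref{thm:3-pcy} to the quasi-isomorphism $(Id,p):\MA\to H(\MA)$: since $\MA$ carries the structure $s_{d+1}M_{\MA}$, this produces the structure $s_{d+1}M_p$ on $H(\MA)$ together with a $d$-pre-Calabi-Yau morphism $(Id,s_{d+1}\mathbf{P}):(\MA,s_{d+1}M_{\MA})\to (H(\MA),s_{d+1}M_p)$ whose first component is $p^{x,y}[1]$. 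Dually, applying Theorem \ref{thm:2-pcy} to $(Id,i):H(\MA)\to \MA$ yields the structure $s_{d+1}M_i$ on $H(\MA)$ and a $d$-pre-Calabi-Yau morphism $(Id,s_{d+1}\mathbf{I}):(H(\MA),s_{d+1}M_i)\to (\MA,s_{d+1}M_{\MA})$ whose first component is $i^{x,y}[1]$.

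Next I would form the composite $d$-pre-Calabi-Yau morphism, in the sense of Definition \ref{def:cp-pCY},
\[
(H(\MA),s_{d+1}M_i)\xrightarrow{\,s_{d+1}\mathbf{I}\,}(\MA,s_{d+1}M_{\MA})\xrightarrow{\,s_{d+1}\mathbf{P}\,}(H(\MA),s_{d+1}M_p),
\]
which is again a $d$-pre-Calabi-Yau morphism since $\operatorname{pCY_d}$ is a category. Its object map is $Id$, hence a bijection, and by the shape of the composition diagrams its component indexed by $((x,y))$ is the composite of the first components, namely $(p^{x,y}\circ i^{x,y})[1]$. Because $H(\MA)$ carries the vanishing differential, the cohomology of $H(\MA)$ is $H(\MA)$ itself and a chain endomorphism of $H(\MA)$ equals the map it induces on cohomology; as $p\circ i$ is a composite of quasi-isomorphisms, it is therefore an isomorphism of (trivially differential-graded) vector spaces for every $x,y\in\MO_{\MA}$. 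Lemma \ref{lemma:iso-pCY} then applies verbatim and shows that the composite is an isomorphism in $\operatorname{pCY_d}$; in particular $(H(\MA),s_{d+1}M_i)$ and $(H(\MA),s_{d+1}M_p)$ are isomorphic.

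For the \emph{in particular} statement I would argue by transitivity. Given two quasi-isomorphisms $(Id,p),(Id,p'):\MA\to H(\MA)$, I would fix any quasi-isomorphism $(Id,i):H(\MA)\to\MA$ of dg quivers---one exists because over the field $\kk$ every complex is quasi-isomorphic to its cohomology---and apply the first part twice to obtain $s_{d+1}M_p\cong s_{d+1}M_i\cong s_{d+1}M_{p'}$. The statement for $(Id,i),(Id,i')$ is symmetric, fixing instead an auxiliary $(Id,p):\MA\to H(\MA)$ and using $s_{d+1}M_i\cong s_{d+1}M_p\cong s_{d+1}M_{i'}$.

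The argument is short once Lemma \ref{lemma:iso-pCY} and Theorems \ref{thm:2-pcy}--\ref{thm:3-pcy} are in hand, so the only real point to check is that the composite morphism has an invertible first component; this is where it is essential that $H(\MA)$ has vanishing differential, which forces $p\circ i$ to be an honest isomorphism rather than merely a quasi-isomorphism. The remaining verification---that the $((x,y))$-component of the composite is exactly $(p\circ i)[1]$---is the routine bookkeeping on linear terms already embedded in Definition \ref{def:cp-pCY}, and I would only indicate it rather than expand it.
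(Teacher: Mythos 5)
Your proposal is correct and follows essentially the same route as the paper: transfer along $(Id,i)$ via Theorem \ref{thm:2-pcy} and along $(Id,p)$ via Theorem \ref{thm:3-pcy}, compose the resulting morphisms $s_{d+1}\mathbf{P}\circ s_{d+1}\mathbf{I}:(H(\MA),s_{d+1}M_i)\rightarrow(H(\MA),s_{d+1}M_p)$, observe that the linear component is an isomorphism because $H(\MA)$ has vanishing differential, and conclude by Lemma \ref{lemma:iso-pCY}. Your explicit transitivity argument for the \emph{in particular} statement is the intended (implicit) deduction in the paper, so no discrepancy there.
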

\begin{proof}
    Consider quasi-isomorphisms $(Id,p):\MA\rightarrow H(\MA)$ and $(Id,i) : H(\MA)\rightarrow \MA$. 
    By Theorem \ref{thm:2-pcy}, one can define a $d$-pre-Calabi-Yau structure $s_{d+1}M_{p}$ on $H(\MA)$ as well as a $d$-pre-Calabi-Yau morphism $(Id,s_{d+1}\mathbf{P}_{\MA}) : \MA \rightarrow H(\MA)$ such that $s_{d+1}P^{x,y}=p^{x,y}[1]$ for every $x,y\in\MO_{\MA}$.
    Moreover, by Theorem \ref{thm:3-pcy}, one can construct a  $d$-pre-Calabi-Yau structure $s_{d+1}M_{i}$ on $H(\MA)$ and a $d$-pre-Calabi-Yau morphism $(Id,s_{d+1}\mathbf{I}) : H(\MA) \rightarrow \MA$ such that $s_{d+1}I^{x,y}=i^{x,y}[1]$ for every $x,y\in\MO_{\MA}$.
    
    Those two constructions give isomorphic structures. Indeed, we have that the composition $s_{d+1}\mathbf{P}\circ s_{d+1}\mathbf{I} : (H(\MA),s_{d+1}M_{i})\rightarrow (H(\MA),s_{d+1}M_{p})$ is a $d$-pre-Calabi-Yau morphism such that $(s_{d+1}\mathbf{P}\circ s_{d+1}\mathbf{I})^{x,y}$ is an isomorphism since $H(\MA)$ is minimal so $s_{d+1}\mathbf{P}\circ s_{d+1}\mathbf{I}$ is an isomorphism of $d$-pre-Calabi-Yau categories by Lemma \ref{lemma:iso-pCY}.
\end{proof}

\begin{definition}
\label{def:morphism-homology-pcy}
    Let $(\MA,s_{d+1}M_{\MA})$ and $(\MB,s_{d+1}M_{\MB})$ be $d$-pre-Calabi-Yau categories. 
    We consider quasi-isomorphisms of dg quivers $(Id,i_{\MA}) : H(\MA)\rightarrow \MA$ and $(Id,p_{\MB}) : \MB\rightarrow H(\MB)$. Then, by Theorem \ref{thm:2-pcy} $H(\MA)$ is endowed with a $d$-pre-Calabi-Yau structure $s_{d+1}M_{i_{\MA}}$ and $i_{\MA}$ extends to a $d$-pre-Calabi-Yau morphism $(Id,s_{d+1}\mathbf{I}_{\MA}) : H(\MA)\rightarrow \MA$. Moreover, by Theorem \ref{thm:3-pcy} $H(\MB)$ is endowed with a $d$-pre-Calabi-Yau structure $s_{d+1}M_{p_{\MB}}$ and $p_{\MB}$ extends to a $d$-pre-Calabi-Yau morphism $(Id,s_{d+1}\mathbf{P}_{\MB}) : \MB\rightarrow H(\MB)$.
    Given a $d$-pre-Calabi-Yau morphism $(F_0,s_{d+1}\mathbf{F}) : \MA\rightarrow \MB$ we define $(F_0,s_{d+1}H(\mathbf{F})) : H(\MA)\rightarrow H(\MB)$ where $s_{d+1}H(\mathbf{F})=s_{d+1}\mathbf{P}_{\MB} \circ s_{d+1}\mathbf{F}\circ s_{d+1}\mathbf{I}_{\MA}$. 
\end{definition}
Using the previous results, we deduce that any quasi-isomorphism of $d$-pre-Calabi-Yau categories admits a quasi-inverse.
\begin{theorem}
\label{thm:q-iso-pCY}
   Let $(\MA,s_{d+1}M_{\MA})$ and $(\MB,s_{d+1}M_{\MB})$ be $d$-pre-Calabi-Yau categories and consider a $d$-pre-Calabi-Yau morphism $(F_0,s_{d+1}\mathbf{F}) : (\MA,s_{d+1}M_{\MA})\rightarrow (\MB,s_{d+1}M_{\MB})$. Consider quasi-iso\-mor\-phisms $(Id,i_{\MA}) : H(\MA)\rightarrow \MA$ and $(Id,p_{\MB}) : \MB\rightarrow H(\MB)$ and $(F_0,s_{d+1}H(\mathbf{F})) : (H(\MA),M_{i_{\MA}})\rightarrow (H(\MB),M_{p_{\MB}})$ given in Definition \ref{def:morphism-homology-pcy}.
    Then, if $(F_0,s_{d+1}\mathbf{F})$ is a quasi-iso\-mor\-phism, there exists a quasi-iso\-mor\-phism $(G_0,s_{d+1}\mathbf{G}):(\MB,s_{d+1}M_{\MB})\rightarrow (\MA,s_{d+1}M_{\MA})$ of $d$-pre-Calabi-Yau categories as well as quasi-isomorphisms $(Id,i_{\MB}) : H(\MB)\rightarrow \MB$, $(Id,p_{\MA}) : \MA\rightarrow H(\MA)$ such that $p_{\MA}\circ i_{\MA}=\id_{H(\MA)}$, $p_{\MB}\circ i_{\MB}=\id_{H(\MB)}$ and whose associated $(G_0,s_{d+1}H(\mathbf{G}))$ in the sense of Definition \ref{def:morphism-homology-pcy} is the inverse of $(F_0,s_{d+1}H(\mathbf{F}))$.
\end{theorem}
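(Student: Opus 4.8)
The plan is to transport the problem to the minimal models, where Lemma \ref{lemma:iso-pCY} upgrades quasi-isomorphisms to genuine isomorphisms, and then to assemble a quasi-inverse of $(F_0,s_{d+1}\mathbf{F})$ by zig-zagging through $H(\MA)$ and $H(\MB)$. First I would examine the linear part of $s_{d+1}H(\mathbf{F})=s_{d+1}\mathbf{P}_{\MB}\circ s_{d+1}\mathbf{F}\circ s_{d+1}\mathbf{I}_{\MA}$: its $(x,y)$-component is $p_{\MB}^{x,y}[1]\circ f^{x,y}[1]\circ i_{\MA}^{x,y}[1]$, a composite of three quasi-isomorphisms of dg vector spaces, and since $H(\MA)$ and $H(\MB)$ carry the zero differential each factor is an isomorphism on the relevant homogeneous pieces, so $(s_{d+1}H(\mathbf{F}))^{x,y}$ is an isomorphism. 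As $H(\MA)$ and $H(\MB)$ are minimal and $F_0$ is a bijection, Lemma \ref{lemma:iso-pCY} shows that $(F_0,s_{d+1}H(\mathbf{F}))$ is an isomorphism of $d$-pre-Calabi-Yau categories; I write $(G_0,s_{d+1}\mathbf{K})$ for its inverse, with $G_0=F_0^{-1}$.

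Next I would fix the retraction and section data demanded by the statement. Since $i_{\MA}:H(\MA)\to\MA$ is a quasi-isomorphism of dg quivers whose source has vanishing differential, working over the field $\kk$ yields a quasi-isomorphism $p_{\MA}:\MA\to H(\MA)$ with $p_{\MA}\circ i_{\MA}=\id_{H(\MA)}$, and dually a section $i_{\MB}:H(\MB)\to\MB$ with $p_{\MB}\circ i_{\MB}=\id_{H(\MB)}$; these are the two morphisms of dg quivers appearing in the conclusion. Applying Theorem \ref{thm:3-pcy} to $p_{\MA}$ and Theorem \ref{thm:2-pcy} to $i_{\MB}$ equips $H(\MA)$ and $H(\MB)$ with $d$-pre-Calabi-Yau structures $s_{d+1}M_{p_{\MA}}$, $s_{d+1}M_{i_{\MB}}$ and produces extensions $s_{d+1}\mathbf{P}_{\MA}:\MA\to H(\MA)$ and $s_{d+1}\mathbf{I}_{\MB}:H(\MB)\to\MB$. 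For the phrase ``inverse of $(F_0,s_{d+1}H(\mathbf{F}))$'' to even be meaningful the structures involved must agree on the nose, namely $s_{d+1}M_{p_{\MA}}=s_{d+1}M_{i_{\MA}}$ and $s_{d+1}M_{i_{\MB}}=s_{d+1}M_{p_{\MB}}$. All minimal-model structures on $H(\MA)$ (resp. $H(\MB)$) are isomorphic by Lemma \ref{lemma:pCY-structure-cohomology}; the key technical point is to make the non-canonical lifts in the inductions of Theorems \ref{thm:2-pcy} and \ref{thm:3-pcy} compatibly, using the common contraction $(i_{\MA},p_{\MA})$, resp. $(i_{\MB},p_{\MB})$, so that these isomorphisms become equalities.

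Granting this, set $\gamma=s_{d+1}\mathbf{P}_{\MA}\circ s_{d+1}\mathbf{I}_{\MA}$ and $\delta=s_{d+1}\mathbf{P}_{\MB}\circ s_{d+1}\mathbf{I}_{\MB}$. Each is a $d$-pre-Calabi-Yau endomorphism of a minimal category whose linear part is $(p_{\MA}i_{\MA})^{x,y}[1]=\id$, resp. $(p_{\MB}i_{\MB})^{x,y}[1]=\id$, hence an automorphism by Lemma \ref{lemma:iso-pCY}; in particular $\gamma^{-1}$ and $\delta^{-1}$ exist as $d$-pre-Calabi-Yau morphisms. I would then define
\[
s_{d+1}\mathbf{G}:=s_{d+1}\mathbf{I}_{\MA}\circ\gamma^{-1}\circ s_{d+1}\mathbf{K}\circ\delta^{-1}\circ s_{d+1}\mathbf{P}_{\MB}:\MB\to\MA,
\]
whose linear part is a composite of quasi-isomorphisms, so that $(G_0,s_{d+1}\mathbf{G})$ is a quasi-isomorphism of $d$-pre-Calabi-Yau categories. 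Finally, using $s_{d+1}H(\mathbf{G})=s_{d+1}\mathbf{P}_{\MA}\circ s_{d+1}\mathbf{G}\circ s_{d+1}\mathbf{I}_{\MB}$ together with the definitions of $\gamma,\delta$ and the associativity of composition, one computes
\[
s_{d+1}H(\mathbf{G})=\gamma\circ\gamma^{-1}\circ s_{d+1}\mathbf{K}\circ\delta^{-1}\circ\delta=s_{d+1}\mathbf{K}=(s_{d+1}H(\mathbf{F}))^{-1},
\]
which is the desired conclusion, the conditions $p_{\MA}i_{\MA}=\id$ and $p_{\MB}i_{\MB}=\id$ holding by construction.

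The main obstacle is the structure-matching of the second paragraph: coordinating the inductive choices of Theorems \ref{thm:2-pcy} and \ref{thm:3-pcy} so that the four transferred structures coincide exactly, rather than merely up to the isomorphisms of Lemma \ref{lemma:pCY-structure-cohomology}, all while respecting invariance under the cyclic action. Once equality of the minimal-model structures is secured, the use of the automorphisms $\gamma,\delta$ makes the remaining verification purely formal, circumventing the fact that $s_{d+1}\mathbf{P}_{\MA}\circ s_{d+1}\mathbf{I}_{\MA}$ and $s_{d+1}\mathbf{P}_{\MB}\circ s_{d+1}\mathbf{I}_{\MB}$ need not be the identity.
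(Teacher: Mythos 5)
Your proposal follows essentially the same route as the paper: invert $(F_0,s_{d+1}H(\mathbf{F}))$ on the minimal models via Lemma \ref{lemma:iso-pCY}, choose a retraction $p_{\MA}$ and a section $i_{\MB}$ with $p_{\MA}\circ i_{\MA}=\id_{H(\MA)}$ and $p_{\MB}\circ i_{\MB}=\id_{H(\MB)}$, extend them to $d$-pre-Calabi-Yau morphisms by Theorems \ref{thm:3-pcy} and \ref{thm:2-pcy}, and conjugate the inverse $s_{d+1}\mathbf{K}$ back to a morphism $\MB\rightarrow\MA$. Your cancellation $s_{d+1}H(\mathbf{G})=\gamma\circ\gamma^{-1}\circ s_{d+1}\mathbf{K}\circ\delta^{-1}\circ\delta=s_{d+1}\mathbf{K}$ is the same computation the paper performs after normalizing $s_{d+1}\mathbf{P}_{\MA}\circ s_{d+1}\mathbf{I}_{\MA}=\id$ and $s_{d+1}\mathbf{P}_{\MB}\circ s_{d+1}\mathbf{I}_{\MB}=\id$; inserting $\gamma^{-1},\delta^{-1}$ into $s_{d+1}\mathbf{G}$ versus absorbing them into the extensions are equivalent maneuvers.

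The one real issue is the step you leave granted: the equalities $s_{d+1}M_{p_{\MA}}=s_{d+1}M_{i_{\MA}}$ and $s_{d+1}M_{i_{\MB}}=s_{d+1}M_{p_{\MB}}$, which you describe as ``the main obstacle'' requiring a coordination of the non-canonical lifts inside the inductions of Theorems \ref{thm:2-pcy} and \ref{thm:3-pcy}. No such coordination is needed, and the device you already use closes the gap. By Lemma \ref{lemma:iso-pCY}, $\gamma:=s_{d+1}\mathbf{P}_{\MA}\circ s_{d+1}\mathbf{I}_{\MA}:(H(\MA),s_{d+1}M_{i_{\MA}})\rightarrow (H(\MA),s_{d+1}M_{p_{\MA}})$ is an isomorphism, since its linear part is $(p_{\MA}\circ i_{\MA})[1]=\id$. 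Hence $\gamma^{-1}\circ s_{d+1}\mathbf{P}_{\MA}$ is again a $d$-pre-Calabi-Yau morphism with linear part $p_{\MA}[1]$, but now with target $(H(\MA),s_{d+1}M_{i_{\MA}})$; similarly $s_{d+1}\mathbf{I}_{\MB}\circ\delta^{-1}$ extends $i_{\MB}$ with source $(H(\MB),s_{d+1}M_{p_{\MB}})$. Replacing the extensions produced by the transfer theorems with these adjusted ones matches the structures on the nose and simultaneously achieves $s_{d+1}\mathbf{P}_{\MA}\circ s_{d+1}\mathbf{I}_{\MA}=\id$ and $s_{d+1}\mathbf{P}_{\MB}\circ s_{d+1}\mathbf{I}_{\MB}=\id$; this is exactly what the paper's phrase ``up to an automorphism of $H(\MA)$ (resp.\ $H(\MB)$)'' accomplishes, and after this adjustment your $\gamma,\delta$ become identities and your formula for $s_{d+1}\mathbf{G}$ reduces to the paper's $s_{d+1}\mathbf{I}_{\MA}\circ s_{d+1}\mathbf{K}\circ s_{d+1}\mathbf{P}_{\MB}$. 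Note that the matching (or this adjustment) is genuinely a prerequisite for your formula and not a convenience: without it, a composite such as $s_{d+1}\mathbf{K}\circ\delta^{-1}\circ s_{d+1}\mathbf{P}_{\MB}$ does not typecheck in $\operatorname{pCY_d}$, because $\delta^{-1}$ lands in $(H(\MB),s_{d+1}M_{i_{\MB}})$ while $s_{d+1}\mathbf{K}$ expects its source to be $(H(\MB),s_{d+1}M_{p_{\MB}})$. With that step discharged as above, your proof is correct and coincides with the paper's.
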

\begin{proof}
   Since $(F_0,\mathbf{F})$ is a quasi-isomorphism, it induces by Lemma \ref{lemma:iso-pCY} an isomorphism of $d$-pre-Calabi-Yau categories $(F_0,s_{d+1}H(\mathbf{F})) : (H(\MA),M_{i_{\MA}})\rightarrow (H(\MB),M_{p_{\MB}})$ which has an inverse denoted here by $(K_0,s_{d+1}\mathbf{K}) : (H(\MB),M_{p_{\MB}})\rightarrow (H(\MA),M_{i_{\MA}})$. 
   Consider $(Id,i_{\MB}) : H(\MB)\rightarrow \MB$, $(Id,p_{\MA}) : \MA\rightarrow H(\MA)$ such that $p_{\MA}\circ i_{\MA}=\id_{H(\MA)}$ and $p_{\MB}\circ i_{\MB}=\id_{H(\MB)}$. By Theorem \ref{thm:2-pcy} and Theorem \ref{thm:3-pcy}, those maps extend to $d$-pre-Calabi-Yau morphisms $(Id,s_{d+1}\mathbf{I}_{\MA}) : H(\MA)\rightarrow \MA$, $(Id,s_{d+1}\mathbf{P}_{\MA}) : \MA\rightarrow H(\MA)$, $(Id,s_{d+1}\mathbf{I}_{\MB}) : H(\MB)\rightarrow \MB$ and $(Id,s_{d+1}\mathbf{P}_{\MB}) : \MB\rightarrow H(\MB)$ such that  $(s_{d+1}\mathbf{P}_{\MA}\circ s_{d+1}\mathbf{I}_{\MA})^{x,y}=\id_{{}_{x}H(\MA)_{y}[1]}$ for each $x,y\in\MO_{\MA}$ and $(s_{d+1}\mathbf{P}_{\MB}\circ s_{d+1}\mathbf{I}_{\MB})^{x,y}=\id_{{}_{x}H(\MB)_{y}[1]}$ for each $x,y\in\MO_{\MB}$. Moreover, up to an automorphism of the $d$-pre-Calabi-Yau category $H(\MA)$ (resp. $H(\MB)$), it is possible to assume that $s_{d+1}\mathbf{P}_{\MA}\circ s_{d+1}\mathbf{I}_{\MA}=\id$ (resp. $s_{d+1}\mathbf{P}_{\MB}\circ s_{d+1}\mathbf{I}_{\MB}=\id$).
   Then, $(K_0,s_{d+1}\mathbf{G})$ with $s_{d+1}\mathbf{G}=s_{d+1}\mathbf{I}_{\MA}\circ s_{d+1}\mathbf{K}\circ s_{d+1}\mathbf{P}_{\MB}$ gives a quasi-inverse for $(F_0,s_{d+1}\mathbf{F})$.
\end{proof}

\bibliographystyle{alpha}
\bibliography{mybiblio}

\vspace{1cm}

MARION BOUCROT: Univ. Grenoble Alpes, CNRS, IF, 38000 Grenoble, France

\textit{E-mail adress :} marion.boucrot@univ-grenoble-alpes.fr

\end{document}